\newcommand{\arxiv}[2][]{\ifthenelse{\equal{#1}{}}
{\href{http://arxiv.org/abs/#2}{\tt arXiv:#2}}
{\href{http://arxiv.org/abs/math/#2}{\tt arXiv:math.#1/#2}}}
\theoremstyle{plain}
\newtheorem{maintheorem}{Theorem}
\newtheorem{theorem}[subsection]{Theorem}
\newtheorem*{theorem*}{Theorem}
\newtheorem{lemma}[subsection]{Lemma}
\newtheorem*{corollary*}{Corollary}
\newtheorem*{example*}{Example}
\newtheorem{proposition}[subsection]{Proposition}
\newtheorem{addendum}[subsection]{Addendum}
\newtheorem{conjecture}[subsection]{Conjecture}
\theoremstyle{definition}
\newtheorem{example}[subsection]{Example}
\newtheoremstyle{remark}
{}{}{}{}{\itshape}{}{ }{\thmname{#1}\thmnumber{ \itshape #2.}}
\theoremstyle{remark}
\newtheorem{remark}[subsection]{Remark}
\DeclareMathOperator{\id}{id}
\DeclareMathOperator{\Fr}{Fr}
\DeclareMathOperator{\Imm}{Imm}
\DeclareMathOperator{\Emb}{Emb}
\DeclareMathOperator{\sfr}{sf}
\DeclareMathOperator{\rel}{rel}
\DeclareMathOperator{\Cone}{Cone}
\def\x{\times}
\def\but{\setminus}
\def\emb{\hookrightarrow}
\def\imm{\looparrowright}
\def\eps{\varepsilon}
\def\phi{\varphi}
\def\tl{\tilde}
\def\emptyset{\varnothing}
\renewcommand{\:}{\colon}
\def\xr#1{\xrightarrow{#1}}
\def\R{\mathbb{R}}
\def\Z{\mathbb{Z}}
\def\C{\mathbb{C}}
\def\E{\mathcal{E}}
\def\Cl#1{\overline{#1}}
\def\RP{\R P} \def\CP{\C P}
\def\KP{\mathbb{K} P} 
\let\widebar\overline \def\Cup{\!\smallsmile\!}
\begin{document}

\title{Projected and near-projected embeddings}
\author{Peter M. Akhmetiev and Sergey A. Melikhov}
\address{HSE Tikhonov Institute of Electronics and Mathematics, 34 Tallinskaya Str., Moscow, 123458, Russia}
\address{IZMIRAN, Moscow region, Troitsk, 142190 Russia}
\email{pmakhmet@mail.ru}
\address{Steklov Mathematical Institute of Russian Academy of Sciences,
ul.\ Gubkina 8, Moscow, 119991 Russia}
\email{melikhov@mi-ras.ru}

\begin{abstract} A stable smooth map $f\:N\to M$ is called {\it $k$-realizable} if its composition
with the inclusion $M\subset M\x\R^k$ is $C^0$-approximable by smooth embeddings; and a {\it $k$-prem} 
if the same composition is $C^\infty$-approximable by smooth embeddings, or equivalently if $f$ lifts 
vertically to a smooth embedding $N\emb M\x\R^k$.

It is obvious that if $f$ is a $k$-prem, then it is $k$-realizable.
We refute the so-called ``Prem Conjecture'' that the converse holds. 
Namely, for each $n=4k+3\ge 15$ there exists a stable smooth immersion $S^n\imm\R^{2n-7}$
that is $3$-realizable but is not a $3$-prem.

We also prove the converse in a wide range of cases.
A $k$-realizable stable smooth fold map $N^n\to M^{2n-q}$ is a $k$-prem if $q\le n$ and $q\le 2k-3$; 
or if $q<n/2$ and $k=1$; or if $q\in\{2k-1,\,2k-2\}$ and $k\in\{2,4,8\}$ and $n$ is sufficiently large.
\end{abstract}

\maketitle

\section{Introduction}

\subsection{$k$-Prems} \label{prem}
We call a map $f\:N\to M$ a (PL/smooth) {\it $k$-prem} ($k$-codimensionally {\bf pr}ojected {\bf em}bedding) if 
it factors into the composition of some (PL/smooth) embedding $N\emb M\x\R^k$ and the projection $M\x\R^k\to M$.
For example, a constant map $f$ is a $k$-prem if and only if $N$ embeds in $\R^k$. 
The abbreviation ``prem'' was coined by A. Sz\H ucz (see \cite{Akh}, \cite{Sz}).
Let us mention some results on the subject (further references can be found in \cite{A2}, \cite{M4}, 
\cite{M5}).

\begin{enumerate}
\item It is well-known and easy to see that stable%
\footnote{See \cite{GG} concerning stable (i.e.\ $C^\infty$-stable) smooth maps and
\cite{M5}*{Part I, Appendix B} concerning stable PL maps.}
smooth functions $S^1\to\R^1$ are smooth $1$-prems (cf.\ Example \ref{1-dim} below).
On the other hand, there exist stable PL functions on trees, ``letter H''$\to\R^1$ and ``letter X''$\to\R^1$ 
that are not PL $1$-prems (Siek\l ucki \cite{Si}; see also another proof in \cite{ARS}*{\S3}).

\item \label{Kulinich} It is not hard to show that stable smooth functions $M^2\to\R^1$ on a orientable 
surface are smooth $2$-prems (\cite{Kul}; see also \cite{M2}*{Proof of the Yamamoto--Akhmetiev Theorem}).
Tarasov and Vyalyi constructed a stable PL function $f\:M^2\to\R^1$ on an orientable surface of a high genus
that is not a PL $2$-prem \cite{TV}.
They also proved that stable PL functions $S^2\to\R^1$ are $2$-prems.

\item Stable smooth maps $M^2\to\R^2$ of an orientable surface are smooth $2$-prems (Yamamoto \cite{Ya}).
Stable smooth maps $S^2\to S^2$ are smooth $2$-prems (Yamamoto--Akhmetiev; see \cite{M2}).
Stable PL/smooth maps $S^n\to S^n$, $n\ge 3$, are PL/smooth $n$-prems (Melikhov \cite{M5}).

\item Every stable smooth approximation of the composition $S^2\to\RP^2\imm\R^3$ of the $2$-covering and 
the Boy immersion is not a smooth $1$-prem (Akhmetiev).
This follows from the results of \cite{A1} for $C^\infty$-approximations, but the proof there works for 
$C^0$-approximations as well.
See also \cite{Gi}, \cite{CS}, \cite{S}, \cite{Po1}, \cite{Po2}.

\item Every finite covering between spaces that are homotopy equivalent to $n$-polyhedra is 
an $(n+1)$-prem (Hansen \cite{Han}).
Every non-degenerate%
\footnote{A PL map is called {\it non-degenerate} if has no point-inverses of dimension $>0$.}
PL map between compact $n$-polyhedra is a PL $(n+1)$-prem (Melikhov \cite{M5}).
It is easy to see (cf.\ Proposition \ref{1.2} and its proof) that every stable PL/smooth map $N^n\to M^m$ is 
a PL/smooth $(2n+1-m)$-prem.

\item A stable PL map is a $k$-prem if and only if it is a PL $k$-prem (Melikhov \cite{M5}).
\end{enumerate}

For a stable smooth map $f\:N^n\to M^m$, where $N$ is compact and $m\ge n$, it is not hard to show that $f$ 
is a smooth $k$-prem if and only if the composition $N\xr{f}M\subset M\x\R^k$ is $C^\infty$-approximable 
by embeddings (see Proposition \ref{1.2} below).

\subsection{$k$-Realizable maps}
We say that a map $f\:N^n\to M^m$ is (PL/smoothly) {\it $k$-realizable} (realization by
embeddings) if the composition $N\xr{f}M\subset M\x\R^k$ is $C^0$-approximable
by (PL/smooth) embeddings.
Topological, PL and smooth realizability are equivalent in the metastable range 
$2(m+k)\ge 3(n+1)$ \cite{Hae}.

Since every compact subset of $\R^k$ can be brought into an arbitrarily small neighborhood 
of the origin by a diffeomorphism of $\R^k$, every $k$-prem is $k$-realizable.

It quite easy to construct non-stable maps that are $k$-realizable but not $k$-prems
(see Examples \ref{1-dim}, \ref{2-dim}, \ref{3-dim} below).

The following conjecture appears in the first paragraph of the 2002 paper \cite{ARS}:

\begin{conjecture}[The Prem Conjecture] Smoothly $k$-realizable general position%
\footnote{The exact meaning of ``general position maps'' is not discussed in \cite{ARS}, but this term 
must have been imported, via \cite{Akh} and \cite{Sz2}, from \cite{Sz}, where ``generic maps'' are
defined as Boardman maps with normal crossings (in the terminology of \cite{GG}).
Such maps form an open dense subset of $C^\infty(N,M)$ and hence include all stable maps (see \cite{GG}).}
smooth maps $N^n\to\R^m$ are smooth $k$-prems, at least in the metastable range $2(m+k)\ge 3(n+1)$.
\end{conjecture}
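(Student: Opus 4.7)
The plan is to reformulate both $k$-realizability and the $k$-prem property as equivariant extension problems on the double-point locus of $f$, and then to match the two obstructions in the metastable range. Let $\tilde N:=\{(x,y)\in N\x N\mid x\ne y,\ f(x)=f(y)\}$ with the $\Z/2$-involution $\tau$ swapping the factors. For $f$ in general position, $\tilde N$ is a stratified subspace whose closure in $N\x N$ meets the diagonal along the singular locus $\Sigma f$. A smooth $k$-prem extension is exactly a smooth $\lambda\:N\to\R^k$ such that the difference $\delta_\lambda(x,y):=\lambda(x)-\lambda(y)$ is nowhere zero on $\Cl{\tilde N}$ (with the right nondegeneracy along $\Sigma f$), which packages as a $\tau$-equivariant map $\Cl{\tilde N}\to\R^k\but 0$ with antipodal action on the target.

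First, I would extract from a $C^0$-approximating embedding $g_i\:N\emb\R^{m+k}$ of $f$ composed with the zero section a $\tau$-equivariant map $\varphi_i\:\tilde N\to S^{k-1}$, by normalizing the $\R^k$-difference of components. Using general position of $f$, $\varphi_i$ extends to $\Cl{\tilde N}$, and this is the equivariant datum that $k$-realizability hands us.

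Second, I would apply Haefliger-style obstruction theory on $\Cl{\tilde N}/\tau$. The metastable bound $2(m+k)\ge 3(n+1)$ together with $\dim\tilde N\le 2n-m$ pins the primary obstruction into a single cohomology group with coefficients in $\pi_{k-1}(S^{k-1})\cong\Z$ twisted by the sign representation; standard arguments then show $\varphi_i$ is equivariantly homotopic to a convenient representative $\varphi$.

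The main obstacle, which I expect to be the genuinely hard step, is promoting $\varphi$ to a difference $\delta_\lambda$ coming from a single smooth $\lambda\:N\to\R^k$. The map $\lambda\mapsto\delta_\lambda$ is far from surjective on equivariant homotopy classes: its image is controlled by a secondary characteristic class of $f$ (a van Kampen-type class on $\Cl{\tilde N}/\tau$ together with a class on $N$ measuring the pullback of $\delta_\lambda$ to the diagonal). In the metastable range I would attempt a Postnikov tower argument to kill this class using dimension, and then promote the resulting $\lambda$ to an actual vertical embedding by jet transversality along $\Sigma f$. This is where I expect the argument to stall in general: the counterexample announced in the abstract, for $k=3$ and $n=4k+3$, is presumably detecting a secondary obstruction invisible to the primary metastable computation, so the strategy above would succeed only under additional hypotheses ruling out such secondary classes.
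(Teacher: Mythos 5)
The statement you were given is the Prem Conjecture, and the paper does not prove it --- it refutes it. Theorem \ref{th1} produces, for each $n=4k+3\ge 15$, a stable smooth immersion $S^n\imm\R^{2n-7}$ that is $3$-realizable but not a $3$-prem, and this example lies inside the metastable range (here $2(m+k)=4n-8\ge 3(n+1)$ once $n\ge 11$). So no argument can complete the proof you outline; the only question is where it must break. Your overall framework is nonetheless the right one and essentially matches the paper's: being a smooth $k$-prem is equivalent (Theorem \ref{2.1}, for stable fold maps or when $3n-2m\le k$) to the existence of an equivariant map $\Delta_f\to S^{k-1}_\circ$, while $k$-realizability is equivalent (Theorem \ref{2.3}) to the existence of an equivariant map $S^{m-1}_\circ*\Delta_f\to S^{m+k-1}_\circ$, i.e.\ to a \emph{stable} equivariant map to $S^{k-1}_\circ$.

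The concrete gap is in your first step. An embedding $g_i\:N\emb\R^{m+k}$ that only $C^0$-approximates $jf$ need not project to $f$, so for $(x,y)\in\Delta_f$ the nonzero vector $g_i(x)-g_i(y)\in\R^{m+k}$ may have vanishing $\R^k$-component; normalizing it yields an equivariant map into $S^{m+k-1}$ (defined on a thickening of $\Delta_f$ inside the deleted square), not a map $\tilde N\to S^{k-1}$, and ``general position of $f$'' cannot repair this. The datum that realizability actually hands you is the stable class, and desuspending it to an honest equivariant map $\Delta_f\to S^{k-1}_\circ$ requires the Conner--Floyd range $\dim\Delta_f\le 2k-3$, i.e.\ $2n-m\le 2k-3$ (Lemma \ref{2.4}(a)), which is strictly stronger than the metastable hypothesis; within that range one gets Theorem \ref{th2}, and outside it Example \ref{2.6} exhibits a free $\Z/2$-complex with a stable but no unstable equivariant map to $S^2_\circ$, which Theorem \ref{2.7} then realizes as a double point set. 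You also misplace the ``genuinely hard step'': by the ``moreover'' clause of Theorem \ref{2.1}, every equivariant homotopy class of maps $\Delta_f\to S^{k-1}_\circ$ is realized by $\tl g$ for some vertical embedded lift $g$, so the passage from $\varphi$ to a lift $\lambda$ is not where a secondary obstruction lives; the entire difficulty, and the source of the counterexample, is the desuspension.
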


It was proved in \cite{ARS} that a $1$-realizable stable smooth map $N^n\to\R^{2n-1}$, $n\ge 3$, 
is a smooth $1$-prem.
It was also proved in \cite{ARS} that a smoothly $1$-realizable stable smooth map of an orientable surface 
$M^2\to\R^3$ is a smooth $1$-prem (the idea of proof for $M^2=S^2$ appears already in \cite{A1}).

\begin{theorem} \label{1.1}
There exists a stable PL function $f\:M\to\R^1$ on some closed orientable $2$-manifold such that $f$ is 
PL $2$-realizable but not a PL $2$-prem. 
\end{theorem}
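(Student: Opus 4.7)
I would prove Theorem~\ref{1.1} by taking $f\:M\to\R^1$ to be (a small variant of) the Tarasov--Vyalyi function from item (2) of the introduction, which is a stable PL function on a closed orientable surface of sufficiently high genus already known not to be a PL $2$-prem. What remains is to verify that this $f$ is PL $2$-realizable, i.e.\ that the composition $M\xr{f}\R^1\subset\R^3$ is $C^0$-approximable by PL embeddings $M\emb\R^3$. The task is genuinely nontrivial because a PL embedding that is $C^0$-close to $(f,0)$ controls \emph{both} the first coordinate (which must be close to $f$) and the last two (which must be close to $0$).

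For the $2$-realizability, fix $\eps>0$. Since $M$ is orientable, it admits a PL embedding $e\:M\emb\R^3$; after scaling the last two coordinates by a factor $\delta\ll\eps$, the image lies in an $\eps/2$-tube around $\R^1\x\{0\}$. Writing the scaled embedding as $(h,\delta u,\delta v)$, the remaining task is to arrange, by pre-composing with a suitable PL self-homeomorphism $\phi$ of $M$, that $h\circ\phi$ is $C^0$-$(\eps/2)$-close to $f$. The idea is to choose the original $e$ so that its height function $h$ has the same critical structure as $f$ \emph{except} at a few critical levels where $f$ violates the height-function condition; at those levels $h$ slightly reorders the critical values of $f$, by amounts less than $\eps/2$. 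Once this matching of combinatorial types is achieved, a standard fiber-preserving PL isotopy constructs $\phi$ and brings $h\circ\phi$ within $C^0$-distance $\eps$ of $f$.

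The crux of the proof, and the main obstacle, is to show that the Tarasov--Vyalyi obstruction blocking $f$ from being a PL $2$-prem can be bypassed by such an arbitrarily small reordering of critical values. If the original \cite{TV} example does not have this bypass property out of the box, the natural fix is to tailor $f$ specifically for Theorem~\ref{1.1}: start with a genuine height function $h_0$ of some embedded surface $M\subset\R^3$, identify two adjacent critical points of $h_0$ whose heights lie within $\eps$ of one another, and perform a minute PL modification that interchanges these two critical heights in a way that produces an $f$ no longer realizable as a height function of \emph{any} embedding $M\emb\R^3$ (by a Tarasov--Vyalyi-style planarity/level-embeddability argument). By construction such an $f$ is within $C^0$-distance $\eps$ of $h_0$, so the embedding scaled from $h_0$ supplies the required approximation. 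The hardest step will be verifying that a carefully chosen interchange of two nearby critical heights in $h_0$ indeed destroys the $2$-prem property; this is expected to involve a detailed analysis of the Reeb graph of $f$ together with the planarity of the family of level-set embeddings near the swapped critical values.
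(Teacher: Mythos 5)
Your choice of example (the Tarasov--Vyalyi function) matches the paper's, and the ``not a PL $2$-prem'' half is indeed just quoted from \cite{TV}. The gap is in the $2$-realizability half: your argument does not close. Everything after ``fix $\eps>0$'' reduces to the claim that $f$ is $C^0$-$\eps$-close to the height function of some PL-embedded surface in $\R^3$ --- but an embedding $e=(h,\delta u,\delta v)$ with $h\circ\phi$ close to $f$ \emph{is} (after reparametrizing by $\phi$) exactly a $2$-realization of $f$, so this claim is the whole content of what you are trying to prove. You explicitly flag it as ``the crux'' and ``the main obstacle'' and leave it as an expectation. The fallback of engineering a new example by interchanging two nearby critical heights of an honest height function is likewise entirely unverified, and it is not at all clear that such a swap can create the Tarasov--Vyalyi obstruction: their obstruction is a global combinatorial one (a high-genus configuration of saddles), not a matter of the ordering of two adjacent critical values, and a small perturbation of the critical values of a height function of an embedded surface can typically be absorbed by a small deformation of the embedding. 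So as written the proposal has a genuine hole at its central step.

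The missing step is supplied by a fact already recorded in \S\ref{prem}, item (\ref{Kulinich}): every \emph{stable smooth} function on a closed orientable surface is a smooth $2$-prem (Kulinich \cite{Kul}; see also \cite{M2}). Any PL function $f\:M\to\R^1$ is $C^0$-approximable by stable smooth functions; each such approximation $g$ therefore lifts to an embedding $G\:M\emb\R^1\x\R^2$ projecting to $g$, and shrinking the $\R^2$-coordinates of $G$ by a small factor yields an embedding $C^0$-close to the composition $M\xr{f}\R^1\subset\R^3$ (which can then be replaced by a nearby PL embedding). This two-line route replaces the entire constructive part of your proposal and avoids any analysis of Reeb graphs or critical-value reordering.
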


This is an easy consequence of the above mentioned result of Tarasov and Vyalyi \cite{TV}.

\begin{proof}
Let $f\:M^2\to\R^1$ be the stable PL function of \cite{TV} that is not a PL $2$-prem.
Since $f$ is approximable by stable smooth functions, which are $2$-prems (see item (\ref{Kulinich}) in 
\S\ref{prem}), $f$ is $2$-realizable.
\end{proof}

\subsection{Main results} \label{1.A}
We construct a counterexample to the Prem Conjecture:

\begin{maintheorem}\label{th1} For each $n=4k+3\ge 15$ there exists a stable smooth
immersion $S^n\imm\R^{2n-7}$ that is $3$-realizable, but is not a $3$-prem.
\end{maintheorem}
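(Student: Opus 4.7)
The plan is to exploit the gap between $C^0$- and $C^\infty$-approximability by embeddings: being a $3$-prem demands a vertical smooth lift of $f$ whose existence is rigidly determined by the double-point data, whereas $3$-realizability allows approximating embeddings that are smoothly far from $f$. I aim to construct $f$ so that its unstable $3$-prem obstruction is nontrivial while the coarser cohomological invariant controlling $C^0$-approximability vanishes.

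First I would make the two obstructions explicit. For a stable immersion $f\: S^n \imm \R^{2n-7}$ in general position, the double-point preimage
\[
\widetilde{\Delta}_f = \{(x,y)\in S^n\x S^n : x\ne y,\ f(x)=f(y)\}
\]
is a closed smooth $7$-manifold with free involution $\tau(x,y)=(y,x)$. By Proposition \ref{1.2}, $f$ is a $3$-prem iff some smooth $\phi\: S^n \to \R^3$ satisfies $\phi(x)\ne\phi(y)$ on $\widetilde{\Delta}_f$; the primary piece of this obstruction is the $\Z/2$-equivariant homotopy class $[\widetilde{\Delta}_f \to S^2]$ induced by $(x,y)\mapsto \phi(x)-\phi(y)$, supplemented by higher extension data across $(S^n\x S^n\but\Delta_{S^n})/\tau$. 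In contrast, $3$-realizability in the metastable range $n\ge 11$ is governed by a weaker van Kampen--type equivariant cohomology class on $S^n\x S^n\but\Delta_{S^n}$, which lives in a stable degree and is insensitive to the unstable twist to be introduced below.

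I would construct $f$ by perturbing a tautological example. Take an embedding $h_0\: S^n \emb \R^{2n-4}$ and its projection $f_0 := \pi\circ h_0\: S^n \imm \R^{2n-7}$, which is automatically a $3$-prem. Now modify $f_0$ by a regular homotopy in $\R^{2n-7}$ that does not lift to an ambient isotopy of $h_0$ in $\R^{2n-4}$. By Smale--Hirsch theory, the relative classification of such modifications reduces to a homotopy group of a Stiefel manifold of $3$-frames in $\R^{n-4}$, and for $n = 4k+3\ge 15$ this group contains a nontrivial element $\alpha$ detected by an unstable Hopf-type invariant. Using $\alpha$ one twists the framing of the double-point locus in a way that activates the $3$-prem obstruction while dying upon stabilization.

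The two verifications then proceed as follows. For $3$-realizability, the stable van Kampen class is invariant under the regular homotopy from $f_0$ and vanishes for $f_0$ (which is even a $3$-prem), so the metastable-range $C^0$-approximation criterion is met for $f$. For non-$3$-prem, one shows that $\alpha$ has shifted $[\widetilde{\Delta}_f\to S^2]$ to a nonzero class. The hard part is pinning down $\alpha$ so that its effect on the $3$-prem invariant is computable and nonzero while its stable triviality is simultaneously confirmed; this is exactly where the numerical conditions enter. The restriction $n = 4k+3\ge 15$ is the range where an unstable element in the relevant Stiefel homotopy group survives, and the codimension $n - 7$ ensures the double-point locus has dimension $7$, placing the primary invariant into $\pi_7(S^2)=\Z/2$ and nearby low-dimensional unstable stems that furnish the needed distinguishing class.
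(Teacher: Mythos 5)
Your high-level framing is the right one: the paper does indeed separate an \emph{unstable} equivariant obstruction (existence of an equivariant map $\Delta_f\to S^2_\circ$, which by Theorem \ref{2.1} decides the $3$-prem property) from a \emph{stable} one (existence of an equivariant map $S^{m-1}_\circ*\Delta_f\to S^{m+2}_\circ$, which by Theorem \ref{2.3} decides $3$-realizability), and the counterexample lives exactly in the gap between them. But your actual construction has a genuine gap and would not work as written. The central problem is that you propose to ``activate'' the $3$-prem obstruction by twisting the framing of the double-point locus via an element $\alpha$ of a Stiefel-manifold homotopy group. The $3$-prem obstruction, however, depends only on the equivariant homotopy type of $\Delta_f$ (it is the obstruction to an equivariant map $\Delta_f\to S^2_\circ$); changing a normal framing while keeping $\Delta_f$ the same $\Z/2$-space cannot turn a trivial obstruction into a nontrivial one. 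To get a counterexample you must change the equivariant homotopy type of the double-point manifold itself, and you must exhibit a concrete free $\Z/2$-space for which the unstable obstruction is nonzero while its suspension vanishes. You never produce such a space; the assertion that ``for $n=4k+3\ge 15$ this group contains a nontrivial element $\alpha$ detected by an unstable Hopf-type invariant'' is unsubstantiated, and the identification of the primary invariant with a single class in $\pi_7(S^2)$ is a mischaracterization (the relevant obstruction is to equivariantly mapping a $7$-dimensional free $\Z/2$-complex to $S^2_\circ$, which involves obstructions in several degrees, not one unstable stem).

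The paper's proof supplies exactly the two ingredients you are missing. First, the homotopy-theoretic seed (Example \ref{2.6}): for a map $f\:S^3\to S^2$ with Hopf invariant even but nonzero, the $4$-complex $K=\Cone(f)\cup_{S^2}\Cone(-f)$ admits no equivariant map to $S^2_\circ$ (because every equivariant self-map of $S^2_\circ$ has odd degree and $H(\phi f)=\deg(\phi)^2H(f)\ne 0$), yet $S^0_\circ*K$ maps equivariantly to $S^3_\circ$ (because $\Sigma f$ is null-homotopic when $H(f)$ is even). This $K$ is then thickened to a closed free $\Z/2$-manifold $M^7$ retracting onto $K$. Second, the realization step (Theorem \ref{2.7}, Koschorke's framed version of the Whitney--Haefliger trick): one must actually produce a stable immersion $S^n\imm\R^{2n-7}$ whose double-point set is equivariantly diffeomorphic to $M$, and this requires verifying that $M/t$ carries a $(n-7)\lambda$-framed normal bundle --- which is where the congruence $n\equiv 3\pmod 4$ enters, via the stable triviality of $4\gamma$ over $\RP^3$. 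Your proposal inverts this logic (start from an immersion and hope the double points acquire the right obstruction) and leaves both the source of the stable/unstable discrepancy and the realization of the prescribed double-point manifold unaddressed; these are precisely the nontrivial parts of the theorem.
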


We also show that the Prem Conjecture holds in a wide range of dimensions:

\begin{maintheorem}\label{th2} Let $N^n$ be a compact smooth manifold (resp.\ a compact polyhedron),
$M^m$ a smooth (PL) manifold and $f\:N\to M$ a stable smooth (PL) map, where $m\ge n$ and
$2n-m\le 2k-3$.
In the smooth case, assume additionally that either $f$ is a fold map or $3n-2m\le k$.
Then $f$ is $k$-realizable if and only if it is a smooth (PL) $k$-prem.
\end{maintheorem}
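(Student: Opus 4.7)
The \emph{if} direction is the easy observation recorded in the introduction, so the content lies in proving that every $k$-realizable stable map is a smooth (PL) $k$-prem. The plan is to rephrase both conditions as equivariant extension problems and then show they coincide under the stated hypotheses. I would set $D(f) = \Cl\{(x,y)\in N\x N : x\ne y,\ f(x) = f(y)\}$, equipped with the involution swapping the two factors; for a stable fold map this $\Z/2$-space is a compact polyhedron of dimension $2n - m$, and for a general stable smooth map in the stated range it is a polyhedron stratified by the multi-singularity strata of $f$.

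The first step is to observe that $f$ is a smooth (PL) $k$-prem iff there exists $\phi\:N\to\R^k$ such that $(x,y)\mapsto(\phi(x)-\phi(y))/|\phi(x)-\phi(y)|$ is a well-defined $\Z/2$-equivariant map $D(f)\to S^{k-1}$. The forward implication is immediate from the definitions. For the converse, given such a $\phi$ the map $x\mapsto(f(x),\phi(x))$ is already injective on the fibers of $f$ by construction, and it can be promoted to a genuine embedding near the singular locus of $f$ by a standard extension argument whose smoothness output is precisely what Proposition~\ref{1.2} equates with being a $k$-prem.

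Second, I would reformulate $k$-realizability along the same lines: a sequence of $C^0$-close embeddings $g_i\:N\emb M\x\R^k$ yields, by normalizing the differences of their second coordinates on pairs of points lying in the fibers of $f$, an equivariant map on a natural compactification $\widehat D(f)$ of $D(f)$; this compactification blows up the fold locus (and in the non-fold case the higher Thom--Boardman strata of $f$) by adjoining ``infinitesimal'' double points. Conversely, any equivariant map $\widehat D(f)\to S^{k-1}$ can be integrated into a $C^0$-approximating embedding by a mollification on a regular neighborhood of the singular set.

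The core of the proof is then to show that the two extension problems, over $D(f)$ and over $\widehat D(f)$, have the same solvability in the stated range. The hypothesis $2n - m \le 2k - 3$ places $\dim D(f)$ strictly below $2(k-1)$, so the equivariant obstruction theory with target $S^{k-1}$ (free antipodal action) is in the stable range and the only nonvanishing obstruction lives in degree $k - 1$; the added strata of $\widehat D(f)\setminus D(f)$ lie in strictly lower dimension and carry no lift of this obstruction, which closes the argument when $f$ is a fold map. For a general stable smooth map the higher multi-singularity strata of $\widehat D(f)$ have dimension at most $3n - 2m$, and the supplementary hypothesis $3n - 2m \le k$ again keeps them below the obstruction dimension. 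I expect the main technical difficulty to be precisely this comparison at the singular strata: constructing the mollifications rigorously and checking that the equivariant obstruction match-up really is dimension-controlled by $2n - m$ and $3n - 2m$, rather than any single deep theorem needing to be invoked.
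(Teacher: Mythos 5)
Your identification of the $k$\textnormal{-}prem side with the existence of an equivariant map $\Delta_f\to S^{k-1}_\circ$ is correct and matches Theorem~\ref{2.1} (Melikhov), and you have correctly spotted that the numerology $2n-m\le 2k-3$ is exactly the Freudenthal/Conner--Floyd stable range for target $S^{k-1}$. But your reformulation of $k$-realizability is wrong, and the mechanism you propose for bridging the two conditions does not address the actual difficulty.

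The flaw is in your second step. If $g\:N\emb M\x\R^k$ is merely $C^0$-close to $jf$ and $(x,y)\in\Delta_f$, you cannot ``normalize the differences of the second coordinates'': the $\R^k$-component of $g(x)-g(y)$ can vanish identically on $\Delta_f$, because the only thing guaranteed to be nonzero is the full difference $g(x)-g(y)\in M\x\R^k$, whose nonvanishing may be carried entirely by the $M$-direction. This is precisely why $k$-realizability is \emph{weaker} than the existence of an equivariant map $\Delta_f\to S^{k-1}_\circ$. The correct criterion (Theorem~\ref{2.3}, built on Theorems~\ref{2.2}(a,b)) is that $\Delta_f*S^{m-1}_\circ$ admits an equivariant map to $S^{m+k-1}_\circ$ --- i.e.\ that $\Delta_f$ admits a \emph{stable} $\Z/2$-map to $S^{k-1}_\circ$ --- where the extra join with $S^{m-1}_\circ$ records exactly the $M$-directions you are missing. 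Consequently, the gap to close is not between $D(f)$ and a compactification $\widehat D(f)$ (the paper's $\check\Delta_f$ is already the compact model, and the blow-up at singular strata is not where the difficulty lives), but between \emph{unstable} and \emph{stable} equivariant cohomotopy. The paper closes it with the equivariant Freudenthal suspension theorem (Lemma~\ref{2.4}), which is not replaceable by a stratumwise obstruction-dimension count: indeed, the whole point of Theorem~\ref{th1} is that beyond the range $2n-m\le 2k-3$ the stable and unstable conditions genuinely diverge, a fact invisible to the dimension comparison you sketch. Your step comparing obstruction classes on the added strata would therefore also need to be replaced by an honest desuspension argument.
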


Theorem \ref{th2} implies the first author's 2004 conjecture \cite{A2}*{Conjecture 1.9}: a generic smooth map 
$S^n\to S^n$ is $k$-realizable if and only if it is an $k$-prem, as long as $n\le 2k-3$ (except
that the definition of ``generic'' in \cite{A2} has to be improved).
On the other hand, Theorem \ref{th2} for stable smooth maps falls short of the second author's 2004 
announcement in \cite{M2}, where it was expected to hold under the weaker restriction $4n-3m\le k$.

The proof of these results is based on the following two criteria.
Let $f$ be a stable PL or smooth map $f\:N\to M$.
\begin{itemize}
\item By recent work of the second author \cite{M5}, $f$ is a $k$-prem if and only if 
$\Delta_f:=\{(x,y)\in N\x N\but\Delta_N\mid f(x)=f(y)\}$ admits a $\Z/2$-equivariant map 
to $S^{k-1}$, assuming that either $f$ is a fold map or $3n-2m\le k$ in the smooth case.
\item Building on some previous work \cite{RS}, \cite{A2}, \cite{M1}, we show that $f$ is $k$-realizable 
if and only if $\Delta_f$ admits a {\it stable} $\Z/2$-map to $S^{k-1}$, i.e.\ for some $x$ there exists 
a $\Z/2$-map $\Delta_f*S^x\to S^{k-1}*S^x=S^{k+x}$.
\end{itemize}
Due to these two criteria, in the stable range we prove the equivalence of the two notions
($k$-realizable and $k$-prem), whereas in the unstable range we have a chance to realize 
the difference known from homotopy theory by a geometric example.
The latter task is nontrivial, which is why there is the huge gap in dimensions between 
Theorems \ref{th1} and \ref{th2}.

The remainder of the paper is largely devoted to the study of this gap.

It turns out pretty quickly that we could not have had $k=1$ in Theorem \ref{th1}:

\begin{maintheorem}\label{th3} Let $N^n$ be a compact smooth manifold (resp.\ a compact polyhedron), 
$M^m$ a smooth (PL) manifold and suppose that $2m>3n$.
A stable smooth (PL) map $f\:N\to M$ is $1$-realizable if and only if it is a smooth (PL) $1$-prem.
\end{maintheorem}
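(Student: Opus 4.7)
The plan is to combine the two criteria highlighted in \S\ref{1.A} with a standard cohomological computation of the $\Z/2$-index of a join. By the $k$-prem criterion of \cite{M5}, $f$ is a $1$-prem if and only if $\Delta_f$ admits a $\Z/2$-equivariant map to $S^0$ (the smooth-case condition $3n-2m\le 1$ is implied by $2m>3n$, and the PL case is unconditional); by the $k$-realizability criterion, $f$ is $1$-realizable if and only if for some $x\ge 0$ there is a $\Z/2$-map $\Delta_f*S^x\to S^{x+1}$. Since the forward implication ``$1$-prem $\Rightarrow$ $1$-realizable'' is the easy direction recorded in the introduction, the theorem reduces to the following lemma: for every free $\Z/2$-CW complex $X$, the existence of a $\Z/2$-map $X*S^x\to S^{x+1}$ for some $x\ge 0$ implies the existence of a $\Z/2$-map $X\to S^0$.

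To prove the lemma I would use the cohomological $\Z/2$-index
\[
i(X):=\sup\{k\ge 0\mid w^k\ne 0\in H^k(X/(\Z/2);\,\Z/2)\},
\]
where $w\in H^1(X/(\Z/2);\Z/2)$ classifies the double cover $X\to X/(\Z/2)$. Any $\Z/2$-map $Z\to S^k$ (antipodal) descends to a map $Z/(\Z/2)\to\RP^k$ pulling $w$ back to $w$, and since $w^{k+1}$ vanishes in $H^*(\RP^k;\Z/2)$ this forces $i(Z)\le k$. Hence the hypothesized stable $\Z/2$-map gives $i(\Delta_f*S^x)\le x+1$. Now I would invoke the classical join inequality $i(X*Y)\ge i(X)+i(Y)+1$ for free $\Z/2$-CW complexes (a Borsuk--Ulam--Volovikov-type theorem, provable by Mayer--Vietoris on $(X*Y)/(\Z/2)$), applied with $Y=S^x$ and $i(S^x)=x$. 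This yields $i(\Delta_f)+x+1\le x+1$, so $i(\Delta_f)=0$, i.e.\ $w=0$; equivalently the cover $\Delta_f\to\Delta_f/(\Z/2)$ is trivial, which is exactly a $\Z/2$-map $\Delta_f\to S^0$.

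The single point of real substance is the join inequality for the cohomological index; everything else is obstruction-theoretic bookkeeping. Notably, the metastable hypothesis $2m>3n$ enters only through the smooth $k$-prem criterion, so the index computation itself imposes no dimension restriction on $\Delta_f$.
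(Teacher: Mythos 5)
Your proof is correct, and it is essentially the same computation as the paper's, though dressed in different cohomological clothing. The paper's proof observes that the integral twisted Euler class $e(\lambda)\in H^1(\Delta_f/t;\Z_\lambda)$ of the line bundle $\lambda$ associated to $\Delta_f\to\Delta_f/t$ is a stable invariant --- via the suspension (Thom) isomorphism $H^i(S^0_\circ*K;P)\cong H^{i-1}(K;P\otimes\Z_\lambda)$ --- and so must vanish given the stable $\Z/2$-map; a line bundle with zero Euler class is trivial, giving the $\Z/2$-map $\Delta_f\to S^0$. You instead work mod $2$ with the cohomological $\Z/2$-index and the join inequality $i(X*Y)\ge i(X)+i(Y)+1$; but that inequality, applied with $Y=S^x_\circ$, is exactly an iterated form of the same Thom isomorphism (now with $\Z/2$ coefficients), and for a line bundle $e(\lambda)=0$ if and only if $w_1(\lambda)=0$. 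So the content of both proofs is the single fact that the first characteristic class of $\lambda$ is stable. Your general join inequality is a little more than what is needed, whereas the paper iterates the one-step suspension isomorphism directly; this is cosmetic. One minor correction: your last sentence states that $2m>3n$ enters ``only through the smooth $k$-prem criterion,'' but for $k=1$ the realizability criterion (Theorem~\ref{2.3}, giving the stable $\Z/2$-map reformulation you use at the outset) also requires $2(m+1)\ge 3(n+1)$, which is the same inequality $2m>3n$; the hypothesis is needed for both halves of the equivalence.
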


In fact, ``stable equivariant maps'' as defined above are not the same as ``stable equivariant maps'' in 
the usual sense of homotopy theory.
For example, our actions of $\Z/2$ do not have any fixed points, but they must always be present in 
the traditional setup of equivariant homotopy theory.
In \S\ref{s3} we reformulate our problem in the traditional algebraic setup, which already brings some progress.

In a relative situation, we are able to produce a simpler example, with $k=2$ instead of $k=3$: 

\begin{maintheorem}\label{th4} For each $n\ge 7$ there exists a stable smooth immersion
$f\:S^n\imm\R^{2n-2}$ and its $2$-realization $g\:S^n\emb\R^{2n}$,
not isotopic through $2$-realizations of $f$ to any vertical lift of $f$.
\end{maintheorem}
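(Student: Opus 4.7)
The strategy is to apply the paper's equivariant-homotopy criteria in classifying form: the isotopy classes of $2$-realizations of $f$ (through $2$-realizations of $f$) correspond to the stable $\Z/2$-equivariant homotopy classes of maps $\Delta_f\to S^1$, while the isotopy classes of vertical lifts of $f$ correspond to the unstable equivariant classes, and the comparison is the natural forgetful map. The theorem thus reduces to exhibiting an $f$ together with a stable class not in the unstable image, and realizing it geometrically.

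I would construct $f$ by taking a generic stable perturbation of the composition $\pi\circ e\:S^n\to\R^{2n-2}$, where $e\:S^n\emb\R^{2n}$ is a standard embedding and $\pi\:\R^{2n}\to\R^{2n-2}$ is the coordinate projection. The perturbed $e$ is then a vertical lift of $f$, so $f$ is automatically a $2$-prem, and its double-point manifold $\Delta_f$ is a closed free $\Z/2$-surface whose equivariant homotopy type can be read off the construction.

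To produce an alternative $2$-realization $g$ not equivalent to a vertical lift, I would modify $e$ locally, in a small ball disjoint from the image of the double points: for instance by a connected sum $e\# K$ with a nontrivially knotted $n$-sphere $K\:S^n\emb\R^{2n}$ (supplied at $n=7$ by Haefliger's $\pi_0\Emb(S^7,\R^{14})=\Z$, and at other $n\ge 7$ by a suitable higher-homotopy-group analogue). Because the surgery is local, $p\circ g$ remains $C^0$-close to $f$, and so $g$ is a $2$-realization.

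The main obstacle is to confirm that $g$ really is not isotopic to any vertical lift through $2$-realizations of $f$. I would do this by computing the equivariant stable class of $g$ from the normalized difference $(x,y)\mapsto(g(x)-g(y))/|g(x)-g(y)|$ (an equivariant map $\Delta_f\to S^{2n-1}$, hence a stable class after equivariant desuspension to $\{\Delta_f,S^1\}^{\Z/2}$), and verifying that it lies outside the image of $[\Delta_f,S^1]^{\Z/2}$. The key input is that, in the dimension range $n\ge 7$, the comparison $[\Delta_f,S^1]^{\Z/2}\to\{\Delta_f,S^1\}^{\Z/2}$ develops a nontrivial cokernel of exactly the shape produced by the knotting modification; this equivariant-homotopy-theoretic gap is the content of the dimension bound.
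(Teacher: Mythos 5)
Your high-level framing is the same as the paper's: $2$-realizations of $f$ are classified, up to isotopy through $2$-realizations, by the stable group $\omega^{2T-1}_{\Z/2}({\Delta_f}_+)$ (Theorem \ref{3.3}(b,c)), whereas vertical lifts only account for unstable classes; so one must choose $f$ so that the two classifications differ and then geometrically realize a stable class outside the unstable image. However, both concrete steps of your construction fail.

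First, the choice of $f$. A generic perturbation of $\pi\circ e$, with $e$ the standard embedding $S^n\subset\R^{n+1}\subset\R^{2n}$ and $\pi$ the projection onto $\R^{2n-2}$, is itself an embedding (the projection is already injective on $S^n\subset\R^{n+1}$ as soon as $2n-2\ge n+1$), so $\Delta_f=\emptyset$ and there is nothing to detect. More importantly, the argument requires precise control of the equivariant homotopy type of $\Delta_f$: one needs $[\Delta_f,S^1_\circ]_{\Z/2}=0$ (so that \emph{all} vertical lifts give the same, trivial, difference class) while $\omega^{2T-1}_{\Z/2}({\Delta_f}_+)\ne 0$. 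The paper arranges $\Delta_f$ to be $\Z/2$-homeomorphic to $S^2\times S^0_\circ$, for which the unstable set is $\pi_2(S^1)=0$ while the stable group is $\pi_1^s\simeq\Z/2$; manufacturing a stable immersion $S^n\imm\R^{2n-2}$ with this prescribed double point manifold is exactly what the Realization Principle (Theorem \ref{2.7}) provides, and it is not automatic from a generic perturbation.

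Second, the choice of $g$. There are no knotted smooth spheres $S^n\emb\R^{2n}$ for $n\ge 4$: by Haefliger--Zeeman unknotting this holds once $2\cdot 2n\ge 3n+4$, and Haefliger's group $\Z$ lives in $\Emb(S^{4k-1},\R^{6k})$ (e.g.\ $S^7\subset\R^{12}$), not in $\Emb(S^7,\R^{14})$. So the proposed modification is vacuous; and even a hypothetical modification supported in a ball disjoint from the double points would leave $\tl g$ unchanged near $\Delta_f$ and hence could not produce a nonzero class in $\omega^{2T-1}_{\Z/2}({\Delta_f}_+)$, which is supported there. The missing ingredient is the surjectivity statement Theorem \ref{3.3}(c) (resting on Lemma \ref{3.2}(c) and the parametric Theorem \ref{3.4}): starting from the obvious lift $g_0\:S^n\emb\R^{2n-1}\subset\R^{2n}$, one realizes the generator $\alpha$ of $\Z/2$ as $\Theta_f(g_0,g)$ for some $2$-realization $g$; since every vertical lift $g_1$ has $\tl g_1|_{\Delta_f}$ unstably equivariantly homotopic to $\tl g_0|_{\Delta_f}$, one gets $\Theta_f(g_1,g)=\alpha\ne 0$ for all such $g_1$.
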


Also, due to a little gap between the ``stable range'' in the traditional algebraic setup
and the ``stable range'' in our initial approach above, with some numeric luck regarding
the value of $k$ we are able to go one dimension deeper:

\begin{maintheorem}\label{th5} Let $N^n$ be a compact smooth manifold (resp.\ a compact polyhedron), 
$M^{2n-2k+2}$ a smooth (PL) manifold and $f\:N\to M$ a stable smooth (PL) map, where $n\ge 2k-1$.
In the smooth case, assume additionally that either $f$ is a fold map or $n\ge 3k-4$.
If $f\:N\to M$ is $k$-realizable and $k\in\{2,4,8\}$, then $f$ is a smooth (PL) $k$-prem.
\end{maintheorem}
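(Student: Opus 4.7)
By the two bullet criteria displayed near the end of \S\ref{1.A} (the $k$-prem criterion of \cite{M5} applies because our hypotheses force either $f$ to be a fold map or $3n-2m\le k$, using $m=2n-2k+2$), the statement reduces to showing that if $\Delta_f$ admits a stable $\Z/2$-equivariant map to $S^{k-1}$ then it admits an unstable one. Since $f$ is stable, $\dim\Delta_f\le 2n-m=2k-2$. Set $Y:=\Delta_f/(\Z/2)$ and let $\xi\to Y$ be the real line bundle associated to the double cover $\Delta_f\to Y$; then a $\Z/2$-map $\Delta_f\to S^{k-1}$ (with antipodal action on $S^{k-1}$) is the same thing as a section of the sphere bundle $S(k\xi)\to Y$.

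By ordinary equivariant obstruction theory, a section of $S(k\xi)$ is built over the skeleta of $Y$ subject to obstructions
\[o_i\in H^{k+i}\bigl(Y;\,\pi_{k+i-1}(S^{k-1})^{\mathrm{tw}}\bigr),\qquad i=0,1,\ldots,k-2,\]
with coefficients twisted by $\xi$. The reformulation carried out in \S\ref{s3} will identify the existence of a stable $\Z/2$-map (in the join sense of \S\ref{1.A}) with the vanishing of the Freudenthal-stabilized classes $E_*o_i\in H^{k+i}\bigl(Y;\,(\pi^s_i)^{\mathrm{tw}}\bigr)$, where $E\:\pi_{k+i-1}(S^{k-1})\to\pi^s_i$ is the ordinary suspension. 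So the destabilization problem reduces to showing that $E$ is an isomorphism throughout the range $0\le i\le k-2$.

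This is where the condition $k\in\{2,4,8\}$ enters. For $i\le k-3$ one has $k+i-1\le 2k-4=2(k-1)-2$, which is the Freudenthal stable range, so $E$ is automatically an isomorphism. In the remaining metastable case $i=k-2$ the degree $k+i-1=2k-3=2(k-1)-1$ is where Freudenthal is only guaranteed to be surjective, and by the EHP sequence the kernel of $E\:\pi_{2k-3}(S^{k-1})\to\pi_{2k-2}(S^k)$ is generated by the Whitehead square $[\iota_{k-1},\iota_{k-1}]\in\pi_{2k-3}(S^{k-1})$. This class vanishes iff $S^{k-1}$ admits an H-space structure, which by Adams's Hopf-invariant-one theorem happens precisely for $k\in\{1,2,4,8\}$. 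Hence in our range $E$ is an isomorphism in every degree appearing, each $o_i$ is determined by its stabilization $E_*o_i$, and the vanishing of all stable obstructions forces the vanishing of all unstable ones, producing the required section.

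The main obstacle is to match up, via the machinery of \S\ref{s3}, the paper's ad hoc join-style ``stable $\Z/2$-map'' with the vanishing of the twisted stabilized obstructions $E_*o_i$; once that bookkeeping is in place, the conclusion is a clean consequence of Hopf invariant one. The slack in the hypothesis $n\ge 2k-1$ (rather than the sharper $n\ge 2k-3$ one might hope for from the dimension count alone) reflects precisely the ``numeric luck'' mentioned in \S\ref{1.A}: it is what ensures $\dim Y\le 2k-2$ and hence that no further obstructions lie above the metastable range handled by the Hopf-invariant-one input.
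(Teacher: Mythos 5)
Your proposal takes a genuinely different route from the paper, and it has a real gap, not just bookkeeping to fill in.

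\textbf{What the paper actually does.} The engine of the paper's proof is Lemma~\ref{3.7}, which is a clean destabilization statement in exactly the first unstable dimension. The paper observes that $\rho^\infty_m\:S^\infty_\circ\to S^{mT}$ factors through the quotient $S_m=S^\infty_\circ/S^{m-1}_\circ$, and that for $m\in\{2,4,8\}$ the Hopf fibration $h\:S^\infty\to\KP^\infty$ (with fiber $S^{m-1}$, and equivariant for the antipodal action on the source and the trivial action on the target) also factors through $S_m$ once the distinguished $S^{m-1}_\circ$ is identified with a fiber. So an equivariant null-homotopy of $\rho^\infty_m\phi_K$ yields a null-homotopy of the composition $K\to S_m\to\KP^\infty$, and the homotopy lifting property of $h$ then produces an equivariant map $K\to S^{m-1}_\circ$ directly. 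This slots into the Theorem~\ref{3.3}/Lemma~\ref{3.1}/Theorem~\ref{2.1} chain in place of Lemma~\ref{3.2}(a), giving the one-dimension improvement. No obstruction theory, no EHP sequence, no discussion of indeterminacy.

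\textbf{The gap in your proposal.} Your reduction hinges on the assertion that ``the reformulation carried out in \S\ref{s3} will identify the existence of a stable $\Z/2$-map\dots with the vanishing of the Freudenthal-stabilized classes $E_*o_i$.'' This is not what \S\ref{s3} provides, and it is not a routine translation. Theorem~\ref{3.3}(a) gives a single obstruction $\Theta(f)\in\omega^{kT}_{\Z/2}({\Delta_f}_+)$ in equivariant stable cohomotopy; to decompose its vanishing into the vanishing of a sequence of twisted singular cohomology classes $E_*o_i$ you would need a comparison between the Atiyah--Hirzebruch-type filtration of $\omega^{kT}_{\Z/2}({\Delta_f}_+)$ and the obstruction-theoretic filtration for sections of $S(k\xi)$, together with control over the indeterminacy of the higher obstructions $o_i$ ($i\ge 1$), which are only defined relative to choices made on lower skeleta. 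You acknowledge this as ``the main obstacle,'' but it is not bookkeeping: even granting that $E$ is an isomorphism on coefficient groups through the relevant range, one still has to show that a choice of stable section over the $(k+i-1)$-skeleton lifts to an unstable one inductively, with matching obstruction cocycles at each stage. The paper sidesteps all of this by exhibiting the Hopf fibration explicitly.

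\textbf{Two smaller points.} Your identification of the kernel of $E\:\pi_{2k-3}(S^{k-1})\to\pi_{2k-2}(S^k)$ with the Whitehead square $[\iota_{k-1},\iota_{k-1}]$ and the appeal to Adams's Hopf invariant one theorem is correct, and this is the same phenomenon that makes the paper's $\KP^\infty$ argument available---so conceptually you found the right ingredient. However, your explanation of the hypothesis $n\ge 2k-1$ is off: $\dim Y\le 2n-m=2k-2$ holds automatically once $m=2n-2k+2$, regardless of $n$. The condition $n\ge 2k-1$ is precisely the metastable range inequality $2(m+k)\ge 3(n+1)$ needed to invoke Theorems~\ref{2.1}, \ref{2.2} and \ref{3.3}, and it also makes Lemma~\ref{3.1} injective so that vanishing of the stable class implies equivariant null-homotopy of $\rho^\infty_k\phi_{\Delta_f}$ on the nose.
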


In \S\ref{s4} we further reformulate a part of the problem, which has by now became fully
algebraic, in geometric terms (using the Pontryagin--Thom construction).
This enables us to construct a secondary obstruction in stable cohomotopy and prove 
its vanishing (Theorem \ref{4.7}), which yields that the stable and the unstable cohomotopy 
Euler classes of the sum of $k$ copies of a line bundle over an $n$-polyhedron $X$
are equally strong in the first unstable dimension $n=2k-1$ when $k$ is even.

As a consequence, we can sometimes go down by one more dimension:

\begin{maintheorem}\label{th6} Let $N^n$ be a compact smooth manifold (resp.\ a compact polyhedron), 
$M^{2n-2k+1}$ a smooth (PL) manifold and $f\:N\to M$ a stable smooth (PL) map, where $n\ge 2k+1$.
In the smooth case, assume additionally that either $f$ is a fold map or $n\ge 3k-2$.
If $f\:N\to M$ is $k$-realizable and $k\in\{2,4,8\}$, then $f$ is a smooth (PL) $k$-prem.
\end{maintheorem}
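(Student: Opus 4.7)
The plan is to reduce Theorem \ref{th6} to Theorem \ref{4.7} via the two criteria stated in \S\ref{1.A}. First I would check that the $k$-prem criterion of \cite{M5} applies under our hypotheses: with $m = 2n - 2k + 1$, the inequality $3n - 2m \le k$ becomes $n \ge 3k - 2$, so the smooth-case hypothesis (either $f$ is a fold map, or $n \ge 3k - 2$) is exactly what the criterion requires, while in the PL case the criterion is unconditional. Hence it suffices to construct a $\Z/2$-equivariant map $\Delta_f \to S^{k-1}$, where $\Z/2$ acts on $\Delta_f$ by swapping coordinates and on $S^{k-1}$ antipodally.

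Next, from the $k$-realizability criterion developed in \S\ref{s3}, the hypothesis that $f$ is $k$-realizable furnishes a \emph{stable} $\Z/2$-map, i.e.\ a $\Z/2$-map $\Delta_f * S^x \to S^{k-1} * S^x = S^{k+x}$ for some $x \ge 0$. I would then reformulate both the unstable and the stable equivariant section problem in cohomotopical language: set $X := \Delta_f / (\Z/2)$ and let $L \to X$ be the real line bundle associated to the double cover $\Delta_f \to X$. Then an unstable (resp.\ stable) $\Z/2$-map $\Delta_f \to S^{k-1}$ corresponds to the vanishing of the unstable (resp.\ stable) cohomotopy Euler class of the rank-$k$ bundle $kL := L \oplus \cdots \oplus L$ on $X$. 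Since $f$ is stable and $m \ge n$, a generic point of $\Delta_f$ is a transverse double point, so $\dim \Delta_f = 2n - m = 2k - 1$, and $X$ is a polyhedron of dimension exactly $2k - 1$; this is precisely the \emph{first} unstable dimension for maps into $S^{k-1}$.

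At this point I would invoke Theorem \ref{4.7}: for $k \in \{2, 4, 8\}$ and a real line bundle $L$ over a $(2k-1)$-dimensional polyhedron, the vanishing of the stable cohomotopy Euler class of $kL$ forces the vanishing of its unstable cohomotopy Euler class. Applying this to $X$ and $L$ yields the desired unstable $\Z/2$-map $\Delta_f \to S^{k-1}$ and hence, via the $k$-prem criterion, shows that $f$ is a $k$-prem.

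The substantive content is thus packaged entirely into Theorem \ref{4.7}; granting it, the above reduction is largely bookkeeping with the two criteria of \S\ref{1.A}. The main obstacle is therefore the proof of Theorem \ref{4.7}, which is carried out in \S\ref{s4} by the Pontryagin--Thom construction and a secondary obstruction analysis in stable cohomotopy; this is also where the restriction $k \in \{2, 4, 8\}$ enters, being tied to Hopf invariant one. The hypothesis $n \ge 2k + 1$ (stronger than the $n \ge 2k - 1$ of Theorem \ref{th5}) serves to keep both criteria of \S\ref{1.A} in direct force in this slightly enlarged codimension, so that all the dimensional inequalities involved in the reduction remain comfortably satisfied.
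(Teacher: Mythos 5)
Your overall architecture matches the paper's: translate $k$-realizability into the vanishing of the stable Euler class $E(k\lambda)\in\Imm^{k\lambda}(\Delta_f/t)$ (Theorem~\ref{3.3}(a) plus Lemma~\ref{4.2}), use the secondary-obstruction machinery of \S\ref{s4} to upgrade this to the vanishing of the unstable Euler class $\E(k\lambda)\in\Emb^{k\lambda}(\Delta_f/t)$ in the first unstable dimension $2n-m=2k-1$, and then feed an equivariant map $\Delta_f\to S^{k-1}_\circ$ into Theorem~\ref{2.1}. Your dimension bookkeeping ($3n-2m\le k$ becoming $n\ge 3k-2$, and $2(m+k)\ge 3(n+1)$ becoming $n\ge 2k+1$) is also correct.

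However, there is a genuine gap at the final step. The vanishing of the unstable cohomotopy Euler class $\E(k\lambda)$ means, via the Pontryagin--Thom correspondence of Lemma~\ref{4.2}, that the composite $\Delta_f\xr{\phi_{\Delta_f}}S^\infty_\circ\xr{\rho^\infty_k}S^{kT}$ is equivariantly null-homotopic; it does \emph{not} by itself produce a nowhere-zero section of $k\lambda$, i.e.\ an equivariant map $\Delta_f\to S^{k-1}_\circ$, which is what Theorem~\ref{2.1} needs. The implication ``$\rho^\infty_k\phi_K$ equivariantly null-homotopic $\Rightarrow$ $\phi_K$ compresses into $S^{k-1}_\circ$'' is Lemma~\ref{3.2}(a) and requires $\dim K\le 2k-3$, whereas here $\dim\Delta_f=2k-1$; the Example following Lemma~\ref{3.2} shows the implication genuinely fails outside that range. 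This is precisely where $k\in\{2,4,8\}$ is used in the paper: Lemma~\ref{3.7} recovers the implication in all dimensions (up to $22$ when $k=8$) by lifting the null-homotopy through the Hopf fibration $S^\infty\to\KP^\infty$. By contrast, Theorems~\ref{4.7} and~\ref{4.9}, to which you attribute the $k\in\{2,4,8\}$ restriction, require only that $k$ be even. So your argument needs one more ingredient --- Lemma~\ref{3.7} --- inserted between ``$\E(k\lambda)=0$'' and ``there is an equivariant map $\Delta_f\to S^{k-1}_\circ$''; without it the proof does not close, and with it the restriction to $k\in\{2,4,8\}$ (rather than to all even $k$) is accounted for.
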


This theorem disproves the first author's 2004 conjecture \cite{A2}*{Conjecture 1.10}: ``There exists 
a generic immersion $f\:S^7\to\R^{11}$ such that the composition of $f$ with the inclusion 
$\R^{11}\subset\R^{13}$ is discretely realizable but not $C^1$-approximable by embedding.''

\subsection{Examples and remarks} \label{1.B}
The following simple examples explain why one restricts attention to generic/stable maps in the Prem Conjecture.

\begin{example}[A $1$-realizable Morse function on $S^1$ that is not a $1$-prem]\label{1-dim}
Let us consider the composition $S^1\xr{p}S^1\xr{\phi}\R$ of the $2$-covering
$p$ and the simplest Morse function $\phi$ with two critical points $x$, $y$.
If $q\:S^1\to\R$ is a map such that $q\x(\phi p)\:S^1\to\R\x\R$ is an
embedding, then $q|_{p^{-1}(x)}\:S^0\emb\R$ is isotopic to
$q|_{p^{-1}(y)}\:S^0\emb\R$ for each choice of the homeomorphism between
$p^{-1}(x)$ and $p^{-1}(y)$.
Hence $S^0$ can be everted by an isotopy within $\R^1$, which is absurd.

On the other hand, $\phi p$ is $1$-realizable, since every stable smooth function
$f\:S^1\to\R$ is a $1$-prem.
Indeed, let $y\in\R$ be the absolute maximum of $f$.
If $\eps>0$ is sufficiently small, $f^{-1}([-\infty,y-\eps))$ is an arc $I$.
The graph $\Gamma(f|_I)\:I\emb I\x [-\infty,y-\eps]$ combines with the graph
$\Gamma(f|_{\Cl{S^1\but I}})\:\Cl{S^1\but I}\emb I\x [y-\eps,+\infty)$ into
an embedding $S^1\to I\x\R$ that projects onto $f$.
\end{example}

\begin{example}[A $2$-realizable Morse function on $S^2$ that is not a $2$-prem]\label{2-dim}
The complete graph on $5$ vertices $K_5$ contains a Hamiltonian
(i.e.\ passing once through each vertex) cycle $Z_1$ of length $5$, and the
remaining $5$ edges of $K_5$ form another Hamiltonian cycle $Z_2$.
By switching the thus obtained smoothing of each vertex to the opposite one
(without changing the orientations of the edges), we get an Eulerian
(i.e.\ passing once through each edge) cycle $Z$.
Attaching three $2$-disks to $K_5$ along $Z_1$, $Z_2$ and $Z$, we obtain
a genus two closed surface $M$ and a non-stable Morse function
$f\:M\to\R^1$ whose only degenerate critical levels are: a two-point set $f^{-1}(1)$
(both points are maxima); $K_5=f^{-1}(0)$ (with a saddle point at each vertex of $K_5$); 
a singleton $f^{-1}(-1)$ (which is a minimum).
Since every stable Morse function on an orientable surface is a $2$-prem
(see item \ref{Kulinich} in \S\ref{prem}), $f$ is $2$-realizable.
However $f$ is not a $2$-prem since $K_5$ does not embed in $\R^2$.
\end{example}

\begin{example}[A $3$-realizable branched covering $S^3\to S^3$ that is not a $3$-prem]\label{3-dim}
The join of two copies of the $2$-covering $S^1\to S^1$ is a non-stable map
$S^3\to S^3$.
It factors into the composition of the $2$-covering $p\:S^3\to\RP^3$ and
the $2$-fold covering $q\:\RP^3\to S^3$ branched along the Hopf link.
Since $p$ is not a $3$-prem by the Borsuk--Ulam theorem (see \cite{M2}),
nor is $qp$.
However by \cite{M2}, $qp$ is $3$-realizable.
\end{example}

\begin{proposition}\label{1.2} Let $f\:N^n\to M^m$ be a stable smooth map between smooth manifolds, 
where $m\ge n$ and $N$ is compact.
Then $f$ is a smooth $k$-prem if and only if the composition of $f$ with the inclusion
$j\:M\x\{0\}\emb M\x\R^k$ is $C^\infty$-approximable by embeddings.
\end{proposition}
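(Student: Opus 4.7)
The plan is to prove the two implications separately, with the nontrivial direction resting entirely on the $C^\infty$-stability of $f$. For the forward direction, suppose $f$ is a smooth $k$-prem, witnessed by an embedding $\tilde f=(f,g)\:N\emb M\x\R^k$ whose first coordinate is $f$. I would consider the one-parameter family $\tilde f_\eps:=(f,\eps g)$: rescaling the $\R^k$-coordinate preserves both injectivity and injectivity of the differential, so $\tilde f_\eps$ is a smooth embedding for every $\eps>0$, while $\tilde f_\eps\to jf$ in the $C^\infty$ topology as $\eps\to 0$.

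For the converse, suppose $jf$ is $C^\infty$-approximable by smooth embeddings $\tilde f_i=(f_i,g_i)\:N\emb M\x\R^k$, so that $f_i\to f$ in $C^\infty(N,M)$ and $g_i\to 0$ in $C^\infty(N,\R^k)$. Since $N$ is compact, the Whitney and compact-open $C^\infty$ topologies on $C^\infty(N,M)$ coincide, and by Mather's characterization the stability of $f$ means that the $\mathrm{Diff}(N)\x\mathrm{Diff}(M)$-orbit of $f$ is an open neighborhood $U$ of $f$. For all sufficiently large $i$, then, $f_i\in U$, so I may pick diffeomorphisms $\phi_i\:N\to N$ and $\psi_i\:M\to M$ with $f_i=\psi_i\circ f\circ\phi_i^{-1}$. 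Composing the embedding $\tilde f_i$ with $\phi_i$ on the source and with the diffeomorphism $\psi_i^{-1}\x\id_{\R^k}$ on the target yields the smooth embedding
\[
(\psi_i^{-1}\x\id_{\R^k})\circ\tilde f_i\circ\phi_i=(f,\;g_i\circ\phi_i)\:N\emb M\x\R^k,
\]
whose projection onto the first factor is exactly $f$. Hence $f$ is a smooth $k$-prem.

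The only genuinely nontrivial step is the invocation of stability, and I expect this to be the main obstacle to watch: it is essential that the approximation hypothesis is in $C^\infty$ (not merely $C^0$), since only $C^\infty$-nearby perturbations of $f$ remain in its right-left equivalence class. This is precisely the gap between the notions of $k$-prem and $k$-realizable that Examples \ref{1-dim}--\ref{3-dim} exploit through non-stable maps, and the restriction $m\ge n$ is needed only to ensure that $f_i$ and $\tilde f_i$ really have the same rank structure so that the stability argument is not derailed.
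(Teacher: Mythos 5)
Your proof is correct and follows essentially the same route as the paper's: rescale the $\R^k$-coordinate for the easy direction, and for the converse use that stability means every sufficiently $C^\infty$-close map $f_i$ equals $\psi_i\circ f\circ\phi_i^{-1}$ for diffeomorphisms, then transport the approximating embedding by $\phi_i$ and $\psi_i^{-1}\x\id_{\R^k}$ to get a lift of $f$ itself. (Only a cosmetic quibble: openness of the $\mathrm{Diff}(N)\x\mathrm{Diff}(M)$-orbit is the definition of stability rather than a theorem of Mather, and the hypothesis $m\ge n$ is not actually used in the argument.)
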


\begin{proof} Given a smooth map $g\:N\to\R^k$ such that $f\x g\:N\to M\x\R^k$ is
a smooth embedding, since $N$ is compact, for each $\eps>0$ one can find a $\delta>0$ 
such that $f\x\delta g$ is $C^\infty$-$\eps$-close to $jf$.

Conversely, if $f'\x g\:N\to M\x\R^k$ is a smooth embedding, $C^\infty$-$\eps$-close 
to $jf$, then $f'$ is $C^\infty$-$\eps$-close to $f$.
If $\eps>0$ is sufficiently small, since $f$ is stable, there exist diffeomorphisms
$h\:N\to N$ and $H\:M\to M$ such that $fh=Hf'$.
Then $H\x\id_{\R^k}$ takes $f'\x g$ onto $(fh)\x g$, which is therefore a smooth embedding.
Hence $f\x(gh^{-1})$ is also a smooth embedding.
So $f$ is a $k$-prem. 
\end{proof}

\begin{remark} In this remark we use ``generic'' in the sense of \cite{M5}.
Let $N^n$ and $M^m$ be smooth manifolds, where $m\ge n$ and $N$ is compact.

(a) By Mather's theorem \cite{Ma} generic smooth maps $f\:N\to M$ are stable if either $6m\ge 7n-7$, 
or $6m\ge 7n-8$ and $m\le n+3$.
By multijet transversality, generic smooth immersions $N\to M$ are stable \cite{GG}*{III.3.3, III.3.11}.
More generally, by Morin's canonical form and well-known results of Mather and Boardman,
generic corank one maps $N\to M$ are stable (see \cite{M5}*{Part I, Theorem A.2}).
In particular, generic fold maps $N\to M$ are stable.

(b) The ``only if'' part of Proposition \ref{1.2} holds without assuming that $f$ is stable.
The ``if'' part has the following version with a weaker hypothesis and weaker conclusion.
If $f\:N\to M$ is a generic smooth map such that $jf$ is $C^\infty$-approximable by embeddings, then 
$f$ is a topological $k$-prem.
This is proved similarly to Proposition \ref{1.2}, using that $f$ is $C^0$-stable \cite{GWPL}.
\end{remark}

\subsection*{Acknowledgements}
We are grateful to E. A. Kudryavtseva, S.~ Maksymenko, Yu.~ Rudyak, R.~ Sadykov, L.~ Siebenmann and 
S. Tarasov for stimulating conversations and useful remarks.

A preliminary version of this paper was privately circulated as a preprint \cite{AMR}.

\section{The Prem Conjecture} \label{s2}

Let $S^k_\circ$ denote the $k$-sphere endowed with the antipodal involution.
Let $\tl X=X\x X\but\Delta$ with the factor exchanging involution $t$.
Given a map $f\:X\to M$, set $\Delta_f=\{(x,y)\in\tl X\mid f(x)=f(y)\}$.
Then $\tl f\:\tl X\but\Delta_f\to S^{m-1}_\circ$ is defined by
$(x,y)\mapsto\frac{f(x)-f(y)}{||f(x)-f(y)||}$.

\begin{theorem}\label{2.1} {\rm (Melikhov \cite{M5})}
Let $N^n$ be a compact smooth manifold (resp.\ a compact polyhedron), $M^m$ a smooth (PL) manifold, 
and $f\:N\to M$ a stable smooth (PL) map, where $m\ge n$ and $2(m+k)\ge 3(n+1)$.
In the smooth case, assume additionally that either $f$ is a fold map or $3n-2m\le k$.
Then $f$ is a smooth (PL) $k$-prem if and only if there exists an equivariant map
$\Delta_f\xr{\phi} S^{k-1}_\circ$.

Moreover, for any such $\phi$ there exists a smooth (PL) embedding
$g\:N\emb M\x\R^k$ projecting to $f$ and such that $\tl g|_{\Delta_f}$ is
equivariantly homotopic to $\phi$.
\end{theorem}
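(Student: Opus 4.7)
For the necessity direction, write a given embedding $g\:N\emb M\x\R^k$ as $g(x)=(f(x),g'(x))$ for some smooth (PL) $g'\:N\to\R^k$. For any $(x,y)\in\Delta_f$ we have $f(x)=f(y)$, so injectivity of $g$ forces $g'(x)\ne g'(y)$, and the formula $\phi(x,y):=\frac{g'(x)-g'(y)}{\|g'(x)-g'(y)\|}$ defines a continuous equivariant map $\Delta_f\to S^{k-1}_\circ$. Under the join identification $S^{m+k-1}_\circ=S^{m-1}_\circ*S^{k-1}_\circ$, this $\phi$ is precisely the restriction of $\tl g$ to $\Delta_f$, which settles the ``moreover'' assertion in this direction.

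For sufficiency, my first step is to assemble $\tl f$ and $\phi$ into a single equivariant map $\Phi\:\tl N\to S^{m+k-1}_\circ$, using the join decomposition $S^{m+k-1}_\circ=S^{m-1}_\circ*S^{k-1}_\circ$: pick a small equivariant collar neighborhood $U$ of $\Delta_f$ in $\tl N$ with a radial function $\rho\:U\to[0,1]$ vanishing on $\Delta_f$ and equal to $1$ on $\partial U$, set $\Phi=\tl f$ outside $U$, and on $U$ let $\Phi(x,y)$ be the point on the join arc from $\phi$ (at the radial foot of $(x,y)$ in $\Delta_f$) to $\tl f(x,y)$, with join parameter $\rho(x,y)$. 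Stability of $f$, together with the hypothesis that either $f$ is a fold map or $3n-2m\le k$, ensures that $\Delta_f$ has a tame enough stratification and an equivariant collar for this construction to be carried out smoothly (respectively, PL).

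The main and hardest step is to realize $\Phi$ geometrically by an embedding $g=f\x g'\:N\emb M\x\R^k$ such that $\tl g\simeq\Phi$ equivariantly, so that in particular $\tl g|_{\Delta_f}\simeq\phi$. I would treat this as a \emph{relative} Haefliger--Weber theorem: in the metastable range $2(m+k)\ge 3(n+1)$, isotopy classes of embeddings $N\emb M\x\R^k$ projecting to $f$ should correspond bijectively to equivariant homotopy classes of extensions of $\tl f$ to equivariant maps $\tl N\to S^{m+k-1}_\circ$. The plan for proving this is to start from $jf=f\x 0\:N\to M\x\R^k$, whose only failure of injectivity is concentrated on $\Delta_f$, and to interpret the equivariant homotopy from $\tl{jf}$ to $\Phi$ as a prescription for a vertical perturbation $g'\:N\to\R^k$; the perturbation is built stratum by stratum along the Thom--Boardman decomposition of $f$, with the Whitney trick (smooth case) or engulfing (PL case) eliminating unwanted double points, and with $\phi$ telling us along which vertical direction to push the two branches of $f$ apart at each double point. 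The metastable bound provides the codimension these tricks need, and ``fold map or $3n-2m\le k$'' keeps each singular stratum in a range where no secondary unstable obstruction appears --- which is precisely the point at which Theorem~\ref{th1} shows that relaxing the hypothesis lets such obstructions reappear and genuinely obstruct the realization.
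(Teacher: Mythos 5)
First, a point of comparison: the paper does not prove Theorem \ref{2.1} at all --- it is quoted from \cite{M5}, and the only argument appearing in the text is Remark \ref{2.1'}, which settles the special case of a triple-point-free immersion. So your proposal has to stand on its own, and it cannot.

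Your necessity argument is correct and complete. The sufficiency argument, however, breaks at its central step. The ``relative Haefliger--Weber theorem'' you invoke --- that embeddings $N\emb M\x\R^k$ \emph{projecting to} $f$ correspond bijectively to equivariant extensions of $\tl f$ to maps $\tl N\to S^{m+k-1}_\circ$ --- is false. By Theorem \ref{2.2}(a) (and Theorem \ref{2.3}), the existence of such an extension is equivalent to $f$ being $k$-realizable, so the existence half of your claimed correspondence is exactly the Prem Conjecture in the metastable range, which Theorem \ref{th1} of this very paper refutes. What Theorem \ref{2.1} actually asserts is that being a $k$-prem is governed by the \emph{unstable} datum of an equivariant map $\Delta_f\to S^{k-1}_\circ$; passing from an extension over $\tl N$ back down to such a map is a desuspension problem that succeeds only under the stronger hypothesis $2n-m\le 2k-3$ (Lemma \ref{2.4}(a)). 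Even restricted to those extensions $\Phi$ that you build from a given $\phi$, realizing $\Phi$ by a vertical lift is precisely the assertion to be proved, not a tool. The mechanism you propose also cannot produce it: the Whitney trick and engulfing in the Haefliger--Weber machinery perturb the map in all directions of $M\x\R^k$ and yield an embedding $C^0$-close to $jf$, not one projecting to $f$. If the perturbation is forced to be vertical, $g=f\x g'$, there are no isolated double points to cancel by general position; instead one must solve an extension problem for $g'\:N\to\R^k$ (achieve $g'(x)\ne g'(y)$ on all of $\Delta_f$, including over triple points, and make $dg'$ injective on $\ker df$ along the singular set so that $g$ is injective near the diagonal). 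This is where stability and the hypothesis ``fold map or $3n-2m\le k$'' genuinely enter, through the local normal forms of Morin singularities and the multiplicity of $f$ --- not, as you suggest, merely through tameness of a collar of $\Delta_f$ (which is in any case not closed in $\tl N$, since it accumulates on the diagonal at singular points). Supplying this construction is the actual content of \cite{M5}.
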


\begin{remark} \label{2.1'}
In the case where $f$ is a triple point free immersion, Theorem \ref{2.1} is obvious.
Indeed, in this case $\Delta_f\subset N\x N$ projects homeomorphically onto $S_f\subset N$, so any extension of 
the equivariant map $S_f\to S^{k-1}$ to a smooth (PL) map $\psi\:N\to\R^k$ yields a smooth (PL) embedding
$f\x\psi\:N\emb M\x\R^k$.
\end{remark}

\begin{remark} As discussed in \cite{M5}, the hypothesis of stability can be somewhat relaxed in the PL case
and in the non-fold smooth case of Theorem \ref{2.1}. 
Consequently this hypothesis can also be similarly relaxed in all our main results of the positive type.
\end{remark}

\begin{theorem}\label{2.2} Let $X^n$ be a compact polyhedron, $Q^q$ a PL manifold
and $f\:X\to Q$ a PL map, where $2q\ge 3(n+1)$.

(a) {\rm (A. Skopenkov \cite{RS})} When $Q=\R^q$, $f$ is $C^0$-approximable by embeddings 
if and only if $\tl f\:\tl X\but\Delta_f\to S^{q-1}$ extends, after 
an equivariant homotopy, to an equivariant map $\tl X\xr{\phi}S^{q-1}_\circ$.

Moreover, for any such $\phi$ and any $\eps>0$ there exists a PL embedding $g$, 
$C^0$-$\eps$-close to $f$ and such that $\tl g$ and $\phi$ are equivariantly 
homotopic with support in an equivariant regular neighborhood of $\Delta_f$.

(b) {\rm (Melikhov \cite{M1}*{1.7(a,a+)})} 
In general, $f$ is $C^0$-approximable by embeddings 
if and only if $f\x f\:X\x X\to Q\x Q$ is $C^0$-approximable by isovariant maps.

Moreover, there exists a $\delta>0$ such that if $\Phi\:X\x X\to Q\x Q$ is 
an isovariant map, $C^0$-$\delta$-close to $f\x f$, then for each $\eps>0$,
there exists a PL embedding $g$, $C^0$-$\eps$-close to $f$ and such that 
$g\x g$ and $\Phi$ are isovariantly homotopic.
\end{theorem}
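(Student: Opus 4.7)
The plan is to treat the necessity direction of both (a) and (b) uniformly, then handle the sufficiency of (a) by classical Haefliger--Weber obstruction theory and the sufficiency of (b) by a more delicate local-to-global argument building on (a).

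For necessity, if $f$ is $C^0$-approximable by PL embeddings $g_i$, then each $g_i$ yields both a map $\tl g_i\:\tl X\to S^{q-1}_\circ$ defined on all of $\tl X$ (since $g_i$ has no double points) and an isovariant map $g_i\x g_i\:X\x X\to Q\x Q$. Because $\tl f$ is uniformly continuous on each compact piece of $\tl X\but\Delta_f$, where $\|f(x)-f(y)\|$ is bounded below, a limit/compactness argument extracts from the $\tl g_i$ an equivariant extension of $\tl f$ as required in (a), and analogously an isovariant $C^0$-approximation of $f\x f$ as required in (b).

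For sufficiency in (a), I would first put $f$ into PL general position to obtain a map $f_0$ whose double-point set has dimension at most $2n-q$; by $2q\ge 3(n+1)$ this lies safely within the range in which double points can be resolved by a small perturbation. Given the equivariant extension $\phi\:\tl X\to S^{q-1}_\circ$ of $\tl f$, the Haefliger--Weber strategy is to use $\phi$ as a ``direction field'' near the singular set of $f_0$ prescribing in which direction to push apart the two sheets at each double point; equivariant obstruction theory in the metastable range then guarantees that these local resolutions can be globally reconciled. The controlled ``moreover'' clause follows once the singular set is confined to a thin equivariant regular neighborhood of $\Delta_f$, so that the resolving deformation has support there and can be made arbitrarily $C^0$-small.

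For sufficiency in (b), the ambient manifold $Q$ is no longer Euclidean, so there is no global notion of ``direction'' in which to push sheets apart. Given an isovariant map $\Phi\:X\x X\to Q\x Q$ that is $C^0$-$\delta$-close to $f\x f$, the strategy is to restrict to small coordinate patches of $Q$, on which $\Phi$ reduces to the Euclidean setting of (a), apply (a) locally to resolve double points inside each patch, and then patch the resulting local embeddings together. The main obstacle---and where the hypothesis on $\delta$ becomes essential---is that the patching must be carried out without reintroducing intersections between sheets coming from different patches; this is what the controlled isovariance-to-embedding machinery of \cite{M1} accomplishes, using the uniform lower bound provided by $\delta$ to give the ``room'' needed to patch coherently and to convert a purely algebraic isovariant approximation into an honest geometric embedding approximation with controlled support.
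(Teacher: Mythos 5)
This theorem is not proved in the paper at all: it is imported verbatim from the literature, part (a) from Repov\v s--Skopenkov \cite{RS} and part (b) from Melikhov \cite{M1}*{1.7(a,a+)}, so there is no in-paper argument to compare yours against. Judged on its own, your proposal gets the easy (necessity) directions essentially right --- though no ``limit'' over the sequence $g_i$ is needed or meaningful: for a single embedding $g$ with $\|g-f\|<\eps$, the linear homotopy from $\tl f$ to $\tl g$ misses $0$ wherever $\|f(x)-f(y)\|>2\eps$, and $(\tl X,\Delta_f)$ deformation retracts onto the complement of such a neighborhood, which is all one needs.

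The sufficiency directions, however, are where the entire content of the theorem lives, and there your proposal has genuine gaps. For (a), ``use $\phi$ as a direction field near the singular set and let equivariant obstruction theory reconcile the local resolutions'' is a slogan, not an argument: the Haefliger--Weber theorem is proved by Weber's construction on the simplicial deleted product, with an induction over skeleta and engulfing/piping to remove self-intersections, and the hypothesis $2q\ge 3(n+1)$ enters as a precise general-position condition on auxiliary homotopies (e.g.\ of $(2n+1)$-dimensional objects into $q$-manifolds), not merely as a bound on $\dim\Delta_f$. Nothing in your sketch explains why the prescribed $\phi$ can actually be realized as $\tl g$ for an embedding $g$, which is the ``moreover'' clause and the part of (a) this paper actually uses. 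For (b), the chart-by-chart strategy is circular as written: you defer the patching --- which is exactly the hard step, since embeddings do not glue over a cover of $Q$ and the isovariant homotopy class of $\Phi$ is a global datum --- to ``the controlled isovariance-to-embedding machinery of \cite{M1},'' i.e.\ to the theorem being proved. Melikhov's actual proof works globally with the isovariant map $\Phi$ rather than localizing over $Q$, and the ``moreover'' clause of (b) (realizing a \emph{prescribed} isovariant homotopy class by $g\x g$) is not addressed by your sketch at all.
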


\subsection{Proof of Theorem \ref{th2}}
Using Theorem \ref{2.2} we will now prove the following.

\begin{theorem}\label{2.3} Let $X^n$ be a compact polyhedron, $M^m$ a compact PL 
manifold and $f\:X\to M$ a stable PL map, where $2(m+k)\ge 3(n+1)$.
Then $f$ is $k$-realizable if and only if $S^{m-1}_\circ*\Delta_f$ admits
an equivariant map to $S^{m+k-1}_\circ$.
\end{theorem}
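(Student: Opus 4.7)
The plan is to deduce Theorem \ref{2.3} from Theorem \ref{2.2}, applied to the lifted map $jf\:X\to M\x\R^k$, by reformulating the resulting equivariant extension problem as an equivariant-join condition.

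\emph{Step 1.} Apply Theorem \ref{2.2} to $jf$. When $M=\R^m$, part (a) applies directly: the hypothesis $2(m+k)\ge 3(n+1)$ is the given metastable inequality (with $q=m+k$), so $jf$ is $C^0$-approximable by PL embeddings if and only if $\widetilde{jf}\:\tl X\but\Delta_f\to S^{m+k-1}_\circ$ admits, up to equivariant homotopy, an equivariant extension $\tl X\to S^{m+k-1}_\circ$. For a general PL manifold $M$ I would apply part (b) and translate the isovariant-approximation condition on $jf\x jf$ into an equivariant extension of the partially defined map $\tl X\but\Delta_f\to\widetilde{M\x\R^k}$ over $\tl X$; in the metastable range the equivariant homotopy type of $\widetilde{M\x\R^k}$, for maps out of $\tl X$, reduces to $S^{m+k-1}_\circ$ via the retraction of a normal neighborhood of the diagonal.

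\emph{Step 2.} Observe that on $\tl X\but\Delta_f$ the map $\widetilde{jf}$ has vanishing $\R^k$-component, hence lands in the equatorial $S^{m-1}_\circ\subset S^{m+k-1}_\circ$ coming from the $M$-direction. The extension problem becomes: does an $S^{m-1}_\circ$-valued equivariant map on $\tl X\but\Delta_f$ admit, up to equivariant homotopy, an equivariant extension over $\tl X$ into the larger sphere $S^{m+k-1}_\circ$?

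\emph{Step 3.} Rephrase this extension via the join $S^{m-1}_\circ*\Delta_f$. Take an equivariant PL regular neighborhood $N$ of $\Delta_f$ in $\tl X$; then $N=\Cyl(\partial N\to\Delta_f)$ and $\widetilde{jf}$ maps $\tl X\but\Int N$ to $S^{m-1}_\circ$. Viewing $\tl X\but\Int N$, up to equivariant homotopy in the metastable range, as the mapping cylinder $\Cyl(\partial N\to S^{m-1}_\circ)$, the union of the two mapping cylinders along $\partial N$ gives precisely the join $S^{m-1}_\circ*\Delta_f$. Under this identification, an equivariant extension $\tl X\to S^{m+k-1}_\circ$ corresponds to an equivariant map $S^{m-1}_\circ*\Delta_f\to S^{m+k-1}_\circ$, and conversely any such join map yields the required extension data.

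The main obstacle is Step~3: one must rigorously match the extension data on the geometric pair $(\tl X,\Delta_f)$ with equivariant maps from the abstract join, discarding in particular the $S^{m-1}$-bundle structure of $\partial N\to\Delta_f$. The metastable inequality $2(m+k)\ge 3(n+1)$ enters essentially here, both to kill the relevant equivariant obstructions and to enable the equivariant reduction of $\tl X\but\Int N$ to the mapping cylinder form used above.
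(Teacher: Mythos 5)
Your reduction works and matches the paper's in the case $M=\R^m$: there the equivariant normal disc bundle of $\check\Delta_f$ (the compact core of $\Delta_f$ away from the diagonal) in $\tl X$ is the pullback of the normal bundle of $\Delta_{\R^m}$ in $\R^m\x\R^m$, which is equivariantly trivial, so the extension problem over a regular neighborhood $N$ rel $\partial N\cong\check\Delta_f\x S^{m-1}_\circ$ is literally the condition that the inclusion $S^{m-1}_\circ\emb S^{m+k-1}_\circ$ extends over the join $\check\Delta_f*S^{m-1}_\circ$; one then still needs the (easy, but necessary) remark that since $k>0$ this relative condition is equivalent to the absolute existence of an equivariant map from the join. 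A correction of phrasing: $\tl X\but\Int N$ is not equivariantly homotopy equivalent to $\Cyl(\partial N\to S^{m-1}_\circ)$; what is true, and all you need, is that the map is already defined there, so the extension problem localizes to $N$ rel $\partial N$.

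The genuine gap is the case of a general $M$, which you flag as ``the main obstacle'' but do not resolve, and which the metastable inequality alone does not resolve. The bundle $\partial N\to\check\Delta_f$ is the pullback under $f\x f$ of the normal bundle $\tau$ of $\Delta_M$ in $M\x M$ --- essentially the tangent bundle of $M$ --- and is in general nontrivial; correspondingly, the deleted product $\widetilde{M\x\R^k}$ near its diagonal is the twisted sphere bundle $S(\tau\oplus\eps^k)$ over $\Delta_M$, not $S^{m+k-1}_\circ$. So the extension problem is ``extend $S(\phi^*\tau)\to S(\tau)\subset S(\tau\oplus\eps^k)$ over $D(\phi^*\tau)$, lying over $\Delta_M$,'' which is not literally the join condition, and the bundle structure cannot simply be discarded. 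The missing idea is the paper's stabilization--destabilization: add a normal bundle $\nu$ of $M$ so that $\tau\oplus\nu$ is trivial, converting the twisted problem into the join condition with $S^{2m-1}_\circ$ in place of $S^{m-1}_\circ$, and then destabilize back $m$ times using a fibered Freudenthal suspension theorem (Lemma \ref{2.4}(b)); this destabilization is exactly where $2n\le 2(m+k)-3$ is used, not a generic ``killing of obstructions.'' In addition, translating the isovariant-approximability criterion of Theorem \ref{2.2}(b) into this bundle extension problem itself requires an argument (via the equivariant fibration $S(\tau\oplus\eps^k)\to\Delta_M$), and the non-smoothable case needs normal block bundles in place of $\tau$ and $\nu$. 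As written, your proof establishes the theorem only for $M=\R^m$.
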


\begin{proof} Let $\check X$ denote $\Cl{X\x X\but N}$, where $N$ is
the second derived neighborhood of the diagonal in some equivariant
triangulation of $X\x X$ in which $f\x f\:X\x X\to M\x M$ is simplicial.
Then $(\tl X,\Delta_f)$ equivariantly deformation retracts onto the pair 
$(\check X,\check\Delta_f)$ of compact polyhedra, where 
$\check\Delta_f=\Cl{\Delta_f\but N}$.
Let $R$ be a $\Z/2$-invariant regular neighborhood of $\check\Delta_f$ in $\check X$, so 
that $R\cup N$ is a $\Z/2$-invariant regular neighborhood of $\Delta_f\cup\Delta_X$ in $X$.
Since $f$ is stable, $f\x f$ restricted to $\check X$ is PL transverse to $\Delta_M$.

Let us focus on the case where $M$ is smoothable.
Let $\tau$ be the equivariant normal PL disc bundle of $\Delta_M$ in $M\x M$, whose
total space $D(\tau)$ is a $\Z/2$-invariant regular neighborhood of $\Delta_M$.
If $\phi$ denotes $f\x f|_{\check\Delta_f}\:\check\Delta_f\to\Delta_M$, then $\phi^*\tau$
is the equivariant normal PL disc bundle of $\check\Delta_f$ in $\check X$, with 
total space $D(\phi^*\tau_M)=R$ \cite{RS1}, \cite{BRS}*{\S II.4}.

In the case $M=\R^m$, $\tau$ is equivariantly trivial, so $\phi^*\tau$ is equivariantly 
isomorphic to $\check\Delta_f\x D^m_\circ\to\check\Delta_f$ with the diagonal action 
of $\Z/2$, where $D^m_\circ$ denotes the $m$-ball with the antipodal action of $\Z/2$
(with one fixed point).
With respect to this trivialization, $\tilde f$ restricted to 
$\check\Delta_f\x\partial D^m_\circ$ is precisely the projection onto 
$\partial D^m_\circ=S^{m-1}_\circ$.
Then the restriction $\check f\:\check X\but\check\Delta_f\to S^{m-1}\subset S^{m+k-1}$
of $\tilde f$ extends, after an equivariant homotopy, to an equivariant map 
$\check X\to S^{m+k-1}_\circ$ if and only if the projection 
$\check\Delta_f\x\partial D^m_\circ\to\partial D^m_\circ=S^{m-1}_\circ\subset S^{m+k-1}_\circ$
extends to an equivariant map $\check\Delta_f\x D^m_\circ\to S^{m+k-1}_\circ$.
Or in other words if and only if the inclusion $S^{m-1}_\circ\emb S^{m+k-1}_\circ$ extends
to an equivariant map $\check\Delta_f*S^{m-1}_\circ\to S^{m+k-1}_\circ$.
But since $k>0$, the latter is equivalent to the existence of an equivariant map
$\check\Delta_f*S^{m-1}_\circ\to S^{m+k-1}_\circ$.
Taking into account Theorem \ref{2.2}(a), this completes the proof of the case $M=\R^m$.

Now let us return to the case where $M$ is smoothable.
Let $j\:M\emb M\x\R^k$ be the inclusion, and let $\eps^k$ be the trivial equivariant
PL $k$-disc bundle over $\Delta_M$ (with fibers $D^k_\circ$). 
It is easy to see that $(jf)\x (jf)$ is $C^0$-approximable by isovariant maps if and only if
the canonical map $d_{\phi,\tau}\:D(\phi^*\tau)\to D(\tau)\subset D(\tau\oplus\eps^k)$ is equivariantly homotopic, keeping the total space $S(\phi^*\tau)$ of the boundary sphere 
bundle fixed, to a map $D(\phi^*\tau)\to S(\tau\oplus\eps^k)$.
Since the bundle projection $S(\tau\oplus\eps^k)\to\Delta_M$ is an equivariant fibration,
the latter holds if and only if the canonical map 
$s_{\phi,\tau}\:S(\phi^*\tau)\to S(\tau)\subset S(\tau\oplus\eps^k)$ extends to
an equivariant map $D(\phi^*\tau)\to S(\tau\oplus\eps^k)$ lying over the map 
$\phi\:\check\Delta_f\to\Delta_M$.
Thus, taking into account Theorem \ref{2.2}(b), it remains to show that $s_{\phi,\tau}$ 
extends to an equivariant map $D(\phi^*\tau)\to S(\tau\oplus\eps^k)$ lying over $\phi$
if and only if $\check\Delta_f*S^{m-1}_\circ$ admits an equivariant map to $S^{m+k-1}_\circ$.

Let $\nu$ be a normal PL $m$-disc bundle of $M$, regarded as an equivariant PL disc 
bundle over $\Delta_M$ (with fiber $D^m_\circ$), so that $\tau\oplus\nu$ is 
equivariantly isomorphic to the trivial PL $2m$-disc bundle $\eps^{2m}$.
If $s_{\phi,\tau}$ extends to an equivariant map $D(\phi^*\tau)\to S(\tau\oplus\eps^k)$ 
lying over $\phi$, then $s_{\phi,\tau\oplus\nu}$ extends to an equivariant map 
$D\big(\phi^*(\tau\oplus\nu)\big)\to S(\tau\oplus\nu\oplus\eps^k)$ lying over $\phi$
(by considering each fiber of $\tau\oplus\nu$ as the join of $D^m_\circ$ and 
$S^{m-1}_\circ$).
But the latter is equivalent to the existence of an equivariant map
$\check\Delta_f*S^{2m-1}_\circ\to S^{2m+k-1}_\circ$.
Since $\dim\Delta_f\le 2n-m$ and $2n\le 2(m+k)-3$ (using that $2(m+k)\ge 3(n+1)$),
the latter is in turn equivalent to the existence of an equivariant map
$\check\Delta_f*S^{m-1}_\circ\to S^{m+k-1}_\circ$ by Lemma \ref{2.4}(a) below.

Conversely, suppose that there exists an equivariant map
$\check\Delta_f*S^{m-1}_\circ\to S^{m+k-1}_\circ$.
Then $s_{\phi,\eps^m}$ extends to an equivariant map $D(\phi^*\eps^m)\to S(\eps^{m+k})$
lying over $\phi$.
Therefore $s_{\phi,\tau\oplus\eps^m}$ extends to an equivariant map 
$D\big(\phi^*(\tau\oplus\eps^m)\big)\to S(\tau\oplus\eps^{m+k})$ lying over $\phi$
(by considering each fiber of $\tau\oplus\eps^m$ as the join of $S^{m-1}_\circ$ and 
$D^m_\circ$).
By Lemma \ref{2.4}(b) below $s_{\phi,\tau\oplus\eps^i}$ extends to an equivariant map 
$D\big(\phi^*(\tau\oplus\eps^i)\big)\to S(\tau\oplus\eps^{i+k})$ lying over $\phi$
for each $i=m-1,\dots,0$, due to $2n\le 2(m+k)-3$.

In the case where $M$ is not smoothable, we can use the equivariant normal block bundle of
$\Delta_M$ in $M\x M$ in place of $\tau$ (see \cite{RS1}, \cite{BRS}) and the normal block
bundle of some PL immersion of $M$ in $\R^{2m}$ in place of $\nu$.
The proof is similar, except that we can no longer use the homotopy lifting property.
Because of this we have to keep track of additional homotopies, but it is straightforward; 
we leave the details to the reader. 
\end{proof}

\begin{lemma}\label{2.4} (a) {\rm (Conner--Floyd \cite{CF})}
If $\Z/2$ acts freely on a polyhedron $K^k$, the suspension map 
$[K,\,S^{q-1}_\circ]_{\Z/2}\xr{\Sigma} [S^0_\circ*K,\,S^q_\circ]_{\Z/2}$
is onto for $k\le 2q-3$ and one-to-one for $k\le 2q-4$.
\smallskip

(b) Let $\pi\:K\to P$ be a $\Z/2$-equivariant PL map between polyhedra, where the action 
on $K^k$ is free, and let $\Sigma_\pi(K)$ be the double mapping
cylinder of $\pi$, i.e.\ the adjunction space of the partial map 
$K\x[-1,1]\supset K\x\{-1,1\}\xr{f\sqcup f}P\sqcup P$,
with $\Z/2$ acting antipodally on $[-1,1]$.
Let $L$ be an invariant subpolyhedron of $K$ and let $\Sigma_\pi(L)$ be the double
mapping cylinder of $\pi|_L$ (in particular, if $L=\emptyset$, it is $P\sqcup P$).
Let $M$ be a polyhedron with the trivial action of $\Z/2$ and let 
$\phi\:P\to M$ be an equivariant map.
Let $\tau$ be a vector $q$-bundle over $M$ and let $\eps$ be the trivial line bundle 
$M\x\R\to M$, where $\Z/2$ acts antipodally on the fibers of $\tau$ and $\eps$.
Let $S(\tau)$ denote the associated sphere bundle (in particular, $S(\tau\oplus\eps)$ is 
the double mapping cylinder of the bundle projection $\Pi\:S(\tau)\to M$), let 
$\lambda\:L\to S(\tau)$ be an equivariant map lying over $\phi$, and let 
$\Sigma\lambda\:\Sigma_\pi(L)\to S(\tau\oplus\epsilon)$ be the ``double mapping cylinder''
of $\lambda$.
Let $[K,\,S(\tau);\,\lambda]^\phi_{\Z/2}$ denote the set of equivariant extensions 
$K\to S(\tau)$ of $\lambda$ lying over $\phi$ up to homotopy through such extensions.
Then $$[K,\,S(\tau);\,\lambda]^\phi_{\Z/2}\xr{\Sigma} 
[\Sigma_\pi(K),\,S(\tau\oplus\eps);\,\Sigma\lambda]^\phi_{\Z/2}$$
is onto if $k\le 2q-3$ and one-to-one if $k\le 2q-4$.
\end{lemma}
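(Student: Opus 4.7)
Part (a) is the theorem of Conner--Floyd, applied directly from \cite{CF}. My plan for part (b) is to carry out equivariant obstruction theory fiberwise over $M$.

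The key observation is that $S(\tau\oplus\eps)$ is the fiberwise unreduced suspension of $S(\tau)$ over $M$: each fiber $S^q$ is the join of its equator $S^{q-1}$ (a fiber of $S(\tau)$) with the $0$-sphere coming from the $\eps$-coordinate, with $\Z/2$ swapping the two poles and acting antipodally on the equator. Likewise, $\Sigma_\pi(K)$ is obtained from $P\sqcup P$ by attaching $K\x[-1,1]$ via $\pi\sqcup\pi$, with $\Z/2$ swapping the two copies of $P$. The suspension map $\Sigma$ simply matches up these two structures: given $f\:K\to S(\tau)$ lying over $\phi\pi$, one defines $\Sigma f$ on $K\x[-1,1]$ by $(x,t)\mapsto(f(x),t)$ under the fiberwise join identification, and by $\phi$ on $P\sqcup P\to M\sqcup M$.

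I would then choose a free $\Z/2$-CW structure on $K$ extending a given one on $L$, so that $\Sigma_\pi(K)$ is built from $\Sigma_\pi(L)\cup(P\sqcup P)$ by attaching cells of the form $\Sigma_\pi(e^i)$ for each free $\Z/2$-orbit $e^i\sqcup te^i$ of $K$ not in $L$. For surjectivity, given any equivariant $F\:\Sigma_\pi(K)\to S(\tau\oplus\eps)$ extending $\Sigma\lambda$ and lying over $\phi$, I would inductively homotope $F$ (rel $\Sigma_\pi(L)$, equivariantly, over $\phi$) across each such cell into suspension form. Over one orbit, the equivariant problem reduces to a non-equivariant extension problem in the fiber $(S^q,S^{q-1})$ of $(S(\tau\oplus\eps),S(\tau))\to M$, whose obstruction lies in the cokernel of the Freudenthal suspension $\Sigma\:\pi_i(S^{q-1})\to\pi_{i+1}(S^q)$. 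Since $S^{q-1}$ is $(q-2)$-connected, this cokernel vanishes for $i\le 2q-3$ by the Freudenthal suspension theorem, so all obstructions vanish when $k\le 2q-3$. For injectivity, the analogous obstructions live in the kernel of the same suspension, which vanishes for $i\le 2q-4$; applying the argument to $K\x I$ in place of $K$ gives the bound $k\le 2q-4$.

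The main obstacle will be arranging these cellular homotopies to preserve simultaneously the equivariance and the ``over $\phi$'' constraint. The latter is controlled by the homotopy lifting property of the sphere bundle $S(\tau\oplus\eps)\to M$---the same property already invoked in the proof of Theorem~\ref{2.3}---which allows fiberwise null-homotopies to be lifted without altering the projection to $M$. Equivariance is handled by working exclusively with free $\Z/2$-cells, so that each equivariant extension problem over an orbit reduces to a single non-equivariant extension problem in the fiber over a choice of representative. The twisting by $\tau$ enters only as a local coefficient system on $K/(\Z/2)$ pulled back via $\phi\pi$, which does not affect the Freudenthal connectivity range.
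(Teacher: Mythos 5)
Your proof is essentially correct, and it takes a genuinely different route from the paper's.

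The paper proves part~(b) by a direct geometric argument in the spirit of Pontryagin's (and Fuchs--Fomenko--Gutenmacher's) proof of the Freudenthal suspension theorem: make $\kappa$ PL transverse to the two pole sections $M_\pm\subset S(\tau\oplus\eps)$; observe that the preimages $Q_\pm$ have codimension $q$, so that $k\le 2q-3$ lets general position separate the projections to $K$ of $Q_+\setminus P_+$ and $Q_-\setminus P_-$; push $Q_+$ into the upper half of the cylinder and $Q_-$ into the lower half; and then desuspend by a fiberwise Alexander trick, using the fibration $\Pi$ to restore the ``over $\phi$'' condition at the end. This is self-contained and does not invoke Freudenthal's theorem as an external input. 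Your approach instead builds $\Sigma_\pi(K)$ from $\Sigma_\pi(L)$ by attaching the free $\Z/2$-orbits of cells $e^i\times(-1,1)$ (correctly noting that $P\sqcup P$ always lies in $\Sigma_\pi(L)$, so the map already sends $P_\pm$ to the poles $M_\pm$) and runs equivariant cellular obstruction theory, trivializing $\tau$ over each contractible cell so that the cell-by-cell obstruction is an element of $\pi_{i+1}(S^q)/\mathrm{im}\,\Sigma$; the Freudenthal theorem (now a black box) kills it for $i\le 2q-3$. Your injectivity count is right --- replacing $K$ by $K\times I$ raises the cell dimension by one, giving $k\le 2q-4$ --- although phrasing it as ``the kernel of $\Sigma$'' is a slight conflation with applying the surjectivity argument one dimension up. The two buy different things: the paper's argument yields part~(a) with Conner--Floyd's $q\ge 3$ restriction removed for free, and remains transparent in the nonsmoothable case where the homotopy-lifting property is unavailable (as the paper notes after Theorem~\ref{2.3}); your version is shorter given the Freudenthal theorem but is more schematic about the inductive homotopy-extension bookkeeping and about carrying the ``over $\phi$'' constraint through when $\Pi$ is only a block bundle rather than a fibration.
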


Conner and Floyd require $q\ge 3$ in (a), but this is not needed as shown, for instance, by 
the usual geometric arguments.

Namely, Lemma \ref{2.4}(a) can be proved by a version of Pontryagin's proof of the Freudenthal
suspension theorem \cite{Po}. 
In more detail, the assertion follows by a general position argument when the homotopy 
sets in question are rewritten in their geometric form.
Such a geometric description is given by the absolute case ($L=\emptyset$) of Lemma \ref{3.2} 
below along with the Pontrjagin--Thom construction \ref{4.0}(a).

Alternatively, Lemma \ref{2.4}(a) can be viewed as a special case (with $P=M=pt$ and 
$L=\emptyset$) of Lemma \ref{2.4}(b), which we now prove by adapting the geometric proof of 
the Freudenthal suspension theorem in \cite{FFG} (see also \cite{M1}*{Lemma 7.7(c)}).

\begin{proof}[Proof of (b)] We will prove the surjectivity; the injectivity is proved similarly.

We are given an equivariant extension $\kappa\:\Sigma_\pi(K)\to S(\tau\oplus\eps)$
of $\Sigma\lambda$ lying over $\phi$.
Let $M_+\sqcup M_-$ be the two copies of $M$ in the double mapping cylinder 
$\Sigma_\Pi\big(S(\tau)\big)=S(\tau\oplus\eps)$.
We may assume that $\kappa|_{K\x(-1,1)}$ is PL transverse to $M_+\sqcup M_-$ (see \cite{BRS}).
Let $Q_+=\kappa^{-1}(M_+)$ and $Q_-=\kappa^{-1}(M_-)$.
Since $\Sigma_\pi(L)$ contains the two copies $P_+\sqcup P_-$ of $P$, we have
$P_+\subset Q_+$ and $P_-\subset Q_-$.
Thus by tranversality, $Q_+\but P_+$ and $Q_-\but P_-$ have disjoint product neighborhoods 
in $K\x(-1,1)$, equivariantly PL homeomorphic to their products with $D^q_\circ$.
Since $k\le 2q-3$, after an arbitrarily small equivariant isotopy of $\Sigma_\pi(K)$ keeping
$\Sigma_\pi(L)$ fixed we may assume that $Q_+\but P_+$ and $Q_-\but P_-$ have disjoint 
images under the projection $K\x(-1,1)\to K$.
Then after an equivariant isotopy of $\Sigma_\pi(K)$ rel $\Sigma_\pi(L)$ which restricts 
to a vertical isotopy of $K\x(-1,1)$ (lying over the identity on $K$) we may assume that 
$Q_+\but P_+$ lies in $K\x(0,1)$ and $Q_-\but P_-$ lies in $K\x (-1,0)$.
Hence $\kappa$ is equivariantly homotopic keeping $\Sigma_\pi(L)$ fixed to a map $\kappa_1$
such that $\kappa_1^{-1}(M_+)\subset K\x(0,1)$ and $\kappa_1^{-1}(M_-)\subset K\x(-1,0)$.

Then $\kappa_1(K\x\{0\})$ is disjoint from $M_+\sqcup M_-$, and hence also from some
disjoint neighborhoods $N_+$ and $N_-$ of $M_+$ and $M_-$.
Since $\id_{S(\tau\oplus\eps)}$ is equivariantly homotopic to a map sending
$N_+$ onto $M_+\cup S(\tau)\x[0,1)$ and $N_-$ onto $M_-\cup S(\tau)\x(-1,0]$,
$\kappa_1$ is equivariantly homotopic to a map $\kappa_2$ sending
$K\x\{0\}$ into $M\x\{0\}$, $P_+\cup K\x[0,1)$ into $M_+\cup S(\tau)\x[0,1)$
and $P_-\cup K\x(-1,0]$ into $M_-\cup S(\tau)\x(-1,0]$.
With some work, this homotopy can be amended so as to fix $\Sigma_\pi(L)$.
Since $\kappa_2$ is equivariantly homotopic rel $\Sigma_\pi(L)$ to $\kappa$, which 
lies over $\phi$, and $\Pi$ is an equivariant fibration, 
$\kappa_2|_{K\x\{0\}}\:K\x\{0\}\to M\x\{0\}$ is equivariantly homotopic rel $\Sigma_\pi(L)$ 
to a map which lies over $\phi$.
By similar arguments, $\kappa_2$ is equivariantly homotopic rel $\Sigma_\pi(L)$ 
to a map $\kappa_3$ which lies over $\phi$ and sends $K\x\{0\}$ into $M\x\{0\}$, 
$P_+\cup K\x[0,1)$ into $M_+\cup S(\tau)\x[0,1)$ and 
$P_-\cup K\x(-1,0]$ into $M_-\cup S(\tau)\x(-1,0]$.
Then by the fiberwise Alexander trick (performed by induction over simplexes of $K$, in
an order of increasing dimension) $\kappa_3$ is equivariantly homotopic 
rel $\Sigma_\pi(L)$ and over $\phi$ to the ``double mapping cylinder'' of a map 
$K\to S(\tau)$ (which must extend $\lambda$ and lie over $\phi$). 
\end{proof}

By combining Theorem \ref{2.1}, Theorem \ref{2.3} and Lemma \ref{2.4}(a), we obtain

\begin{corollary*}[=Theorem \ref{th2}] Let $N^n$ be a compact smooth manifold 
(resp.\ a compact polyhedron), $M^m$ a smooth (PL) manifold and $f\:N\to M$ a stable smooth (PL) map, 
where $m\ge n$ and $2n-m\le 2k-3$.
In the smooth case, assume additionally that either $f$ is a fold map or $3n-2m\le k$. 
Then $f$ is $k$-realizable if and only if it is a smooth (PL) $k$-prem.
\end{corollary*}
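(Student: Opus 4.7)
The forward direction is immediate: as remarked in the introduction, every $k$-prem is $k$-realizable. For the converse, the plan is to chain together the two criteria, Theorem \ref{2.1} and Theorem \ref{2.3}, via the Conner--Floyd desuspension Lemma \ref{2.4}(a).

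First I would check that the hypotheses of Theorems \ref{2.1} and \ref{2.3} are met. Both require the metastable inequality $2(m+k)\ge 3(n+1)$. From $2n-m\le 2k-3$ and $m\ge n$ one gets $2m+2k\ge m+n+2k\ge 2n+3+n=3(n+1)$, so this is automatic. The extra smooth-case hypothesis of Theorem \ref{2.1} (either $f$ is a fold map or $3n-2m\le k$) is precisely our extra assumption.

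Since $f$ is $k$-realizable, Theorem \ref{2.3} supplies an equivariant map
\[
\Phi\:S^{m-1}_\circ*\Delta_f\longrightarrow S^{m+k-1}_\circ.
\]
The task is to desuspend this map $m$ times to an equivariant map $\Delta_f\to S^{k-1}_\circ$, at which point Theorem \ref{2.1} will finish the job. I would apply Lemma \ref{2.4}(a) iteratively, with $K=S^{j-1}_\circ*\Delta_f$ (interpreting $S^{-1}_\circ*\Delta_f$ as $\Delta_f$) and target $S^{j+k-1}_\circ$, running $j$ down from $m$ to $1$. Since $f$ is stable, $\Delta_f$ has dimension $2n-m$, so $\dim K=j+(2n-m)$, and the surjectivity condition of Lemma \ref{2.4}(a) becomes
\[
j+(2n-m)\le 2(j+k)-3,\qquad\text{i.e.}\qquad 2n-m+3\le 2k+j,
\]
which holds for every $j\ge 0$ precisely because of the hypothesis $2n-m\le 2k-3$. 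Iterating the desuspension $m$ times converts $\Phi$ into an equivariant map $\Delta_f\to S^{k-1}_\circ$, and Theorem \ref{2.1} then yields that $f$ is a smooth (PL) $k$-prem.

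The argument is really just a dimension-count assembly of the three previously established results, and the only thing to watch is that the desuspension step admissible at the lowest index $j=0$ gives the exact numerical borderline $2n-m\le 2k-3$; this is where the hypothesis is used, so there is no genuine obstacle beyond bookkeeping once the three tools are in hand.
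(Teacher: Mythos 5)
Your proposal is correct and follows exactly the route the paper itself indicates (the paper states the corollary follows ``by combining Theorem~\ref{2.1}, Theorem~\ref{2.3} and Lemma~\ref{2.4}(a)''), and your verification of the metastable inequality and the Conner--Floyd dimension count are right. One small bookkeeping slip: you say $j$ runs ``from $m$ to $1$'' but then invoke the borderline case ``$j=0$''; with the convention $K=S^{j-1}_\circ*\Delta_f$ the iteration should run $j=m-1,\dots,0$ (so $m$ desuspensions, with the tight condition $2n-m\le 2k-3$ arising at $j=0$), which does not affect the substance of the argument.
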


\subsection{Proofs of Theorems \ref{th1} and \ref{th3}}

\begin{example}\label{2.6} Let $f\:S^3\to S^2$ be a PL map with Hopf invariant
$H(f)$ even but nonzero.
Consider $K:=\Cone(f)\cup_{S^2}\Cone(-f)$, where $-$ denotes the antipodal
involution on $S^2$.
This $K$ is endowed with the involution interchanging the two
copies of the mapping cone of $f$.
Then $S^0_\circ*K$ is $\Z/2$-homotopy equivalent to
$\Cone(\Sigma f)\cup_{S^3}\Cone(-\Sigma f)$.
Since $\Sigma f\:S^4\to S^3$ is null-homotopic, there exists an equivariant map
$S^0_\circ*K\to S^3_\circ$.

At the same time, there exists no equivariant map $K\to S^2_\circ$.
Indeed, every equivariant map $\phi\:S^2_\circ\to S^2_\circ$ has an odd degree
since its first obstruction to homotopy with $\id_{S^2}$ is on the one hand
$\deg(\phi)-1\in\Z\simeq H^2(S^2)$ and on the other hand the image of
the first obstruction to equivariant homotopy with $\id_{S^2}$ in
$H^2_{\Z/2}(S^2_\circ)\simeq\Z$ under the forgetful map, which is onto $2\Z$.
Now $H(\phi f)=\deg(\phi)^2H(f)\ne 0$, which is easy to see
from the definition of the Hopf invariant as the total linking number between
the point-inverses of two regular values. 
\end{example}

A {\it $k\lambda$-framing} of a bundle $\xi$ over a base $X$ is an (unstable)
isomorphism $\xi\simeq k\lambda$ with $k$ copies of the line bundle $\lambda$
over $X$.
A smooth manifold is called {\it stably $k\lambda$-parallelizable} if it
admits a $k$-dimensional normal bundle isomorphic to $k\lambda$.

Recall from \cite{KS}*{\S1} that if $f\:S^n\imm\R^m$ is a smooth immersion
whose normal bundle is trivial over its non-one-to-one points
$\Delta_f\imm S^n$, then the normal bundle of $\Delta_f/t\imm\R^m$ is
$(m-n)\lambda$-framable by the line bundle $\lambda$ associated to
the double covering $\Delta_f\to\Delta_f/t$.
Specifically, the normal bundle of $\Delta_f\to\Delta_f/t\imm\R^m$ is
equivariantly diffeomorphic to $\Delta_f\x\bigoplus_{m-n}\R[\Z/2]$ endowed
with the diagonal action of $\Z/2$, but each copy of $\R[\Z/2]$ splits into
the direct sum of $\R$ with the trivial action and $\R$ with the sign action
of $\Z/2$.

This corresponds to the $m\lambda$-framing of the normal bundle of
$\Delta_f/t$ in $\tilde S^n/t$, which is implicit in the proof of
Theorem \ref{2.3}, under an $n\lambda$-framing of the tangent bundle of
$\tilde S^n/t$, constructed in \cite{A2}*{Lemma 3.1}.

\begin{theorem}[Realization Principle] \label{2.7} Let $\lambda$ be the line
bundle associated to a double covering $\bar M\to M$ over a closed smooth
$(m-n)\lambda$-parallelizable manifold of dimension $2n-m$.
If $2m\ge 3(n+1)$, there exists a stable smooth immersion
$f\:S^n\imm\R^m$ such that $\Delta_f$ is equivariantly diffeomorphic to $\bar M$.
\end{theorem}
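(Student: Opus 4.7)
The plan is to build $f$ by constructing an explicit immersion on a neighborhood of $\bar M$ in the source and then extending it to all of $S^n$, using the metastable range at every step.

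\textbf{Local setup.} Since the pullback of $(m-n)\lambda$ along $\bar M\to M$ is trivial, $\bar M$ has a trivial $(m-n)$-dimensional normal bundle. The metastable inequality $2m\ge 3(n+1)$ implies $3m\ge 4n+3$, i.e.\ Haefliger's range for embedding $\bar M^{2n-m}$ in $\R^n$, so there is an embedding $\bar M\emb\R^n\subset S^n$ with tubular neighborhood $T=\bar M\x D^{m-n}$. Separately, Hirsch's immersion theorem immerses $M$ in $\R^m$ with (unstable) normal bundle $\nu_M=(m-n)(1\oplus\lambda)$; the summands $E_1=(m-n)\cdot 1$ and $E_2=(m-n)\lambda$ are two transverse $(m-n)$-sub-bundles that will serve as the two ``sheets'' of the would-be immersion at $M$.

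\textbf{The local immersion $f_0$.} I would construct an immersion $f_0\:T\imm\R^m$ whose restriction to the zero section $\bar M\x\{0\}$ factors through $\bar M\to M\subset\R^m$, and whose two normal directions over paired points $x,tx\in\bar M$ realize the transverse subbundles $E_1$ and $E_2$ at $q(x)\in M$. Over a fundamental domain $F\subset\bar M$ for the free $\Z/2$-action, one can let the $D^{m-n}$-fiber at $x\in F$ push into $E_1|_{q(x)}$ and the fiber at $tx\in tF$ push into $E_2|_{q(x)}$; the transversality then forces $f_0(x,v)=f_0(tx,v')$ only at $v=v'=0$, so $\Delta_{f_0}$ is equivariantly diffeomorphic to $\bar M$. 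The global existence of such a continuously varying ``sheet assignment'' across the boundary of $F$ is non-trivial because there is no global sheet labeling on a non-trivial cover $\bar M\to M$; this obstruction is resolved by the $n\lambda$-framing of $T(\tl S^n/t)$ of \cite{A2}*{Lemma 3.1}, which packages the two-sheet data as an $m\lambda$-framing of the normal bundle of $\Delta_f/t$ in $\tl S^n/t$.

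\textbf{Extension to $S^n$.} The complement $C=S^n\but\Int T$ is a compact $n$-manifold with boundary $\partial T$, and Haefliger's metastable embedding theorem extends the embedding $f_0|_{\partial T}$ to an embedding $C\emb\R^m$. The hard part is arranging this extension disjoint from $f_0(\Int T)$ so that no spurious double points arise: any accidental intersection pairs between the two $n$-dimensional pieces in $\R^m$ come in algebraically cancelling pairs and can be removed by Whitney-type moves whose $2$-disks lie generically in the complement, again by $2m\ge 3(n+1)$.

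The resulting $f\:S^n\imm\R^m$ is a self-transverse immersion, hence stable by multijet transversality, and $\Delta_f\cong\bar M$ equivariantly. The main obstacle is twofold: constructing $f_0$ despite the absence of a globally coherent sheet labeling on the cover $\bar M\to M$ (which is where the $n\lambda$-framing of the tangent bundle of $\tl S^n/t$ enters), and eliminating spurious double points in the extension step, where the metastable hypothesis is used sharply.
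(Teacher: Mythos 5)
Your local step is essentially sound, but the worry you spend the most effort on is a non‑issue, while the step you dispatch in one sentence is the entire content of the theorem. On the local model: once the normal bundle of $M\emb\R^m$ is identified with $(m-n)(1\oplus\lambda)\cong q_!(\eps^{m-n})$ for $q\:\bar M\to M$, each point $x\in q^{-1}(p)$ canonically labels an $(m-n)$-plane in $\nu_M|_p$, so the map $f_0\:\bar M\x D^{m-n}\to\R^m$ sending the fiber at $x$ into the summand labeled by $x$ is defined globally with no choice of fundamental domain; no appeal to the $n\lambda$-framing of \cite{A2}*{Lemma 3.1} is needed (that framing plays a role in the proof of Theorem \ref{2.3}, not here). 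This is exactly the Koschorke--Sanderson ``figure 8'' model mentioned just before Theorem \ref{2.7}, and the paper's point there is that the cheap way of closing it up (doubling the proper immersion of $\bar M\x D^{m-n}$) yields a closed manifold which is $\bar M\x S^{m-n}$ rather than $S^n$ and has double point set $\bar M\sqcup\bar M$ rather than $\bar M$. So everything hinges on the capping-off.

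That is where your argument breaks. The unwanted intersections of the extension $f(C)$ with $f_0(\Int T)$ (and the self-intersections of $f(C)$) are generically manifolds of dimension $2n-m$, which is positive for all but the smallest $n$ in the metastable range ($2m\ge 3(n+1)$ only gives $2n-m\le(n-3)/2$). They are not isolated points, so there are no ``algebraically cancelling pairs'' and no elementary Whitney moves; removing a positive-dimensional intersection of two $n$-sheets in $\R^m$ in the metastable range is precisely the Whitney--Haefliger trick, and doing it while keeping the prescribed double points over $\bar M$ untouched, creating no new ones, and using the skew-framing/$\lambda^{\otimes(m-n)}$-coorientation data to identify which intersection loci are actually removable, is exactly Koschorke's framed version of that trick \cite{Ko}*{Proof of Theorem 1.15} --- which is the single citation the paper's proof rests on. The paper's route is cleaner than a rel-boundary extension over $C$: it produces a self-transverse regular homotopy $F\:S^{n-1}\x I\imm\R^{m-1}\x I$ from the standard embedding to an embedding $g'$ with $\Delta_F$ equivariantly diffeomorphic to $\bar M$, observes that $g'$ is unknotted by Haefliger, and caps both ends with embedded discs, so no disjunction of an $n$-dimensional complement from the immersed tube ever has to be performed by hand. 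As written, your extension step assumes the conclusion of the key lemma rather than proving it.
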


It is very easy to construct a stable immersion $\phi$ of {\it some} closed
stably parallelizable $n$-manifold into $\R^m$ with trivial normal bundle
such that $\Delta_\phi$ is equivariantly diffeomorphic to
$\bar M\sqcup\bar M$.
Specifically $\phi$ is the double of the ``figure 8'' proper immersion
$\bar M\x D^{n-m}\imm\R^{m-1}\x [0,\infty)$, depicted in \cite{KS}*{\S4}.

\begin{proof} By Koschorke's framed version \cite{Ko}*{Proof of Theorem 1.15}
of the Whitney--Haefliger trick
\cite{Hae}*{D\'emonstration du Th\'eor\`eme 2, a}, \cite{Ad}*{\S VII.4} there exists a
self-transverse regular homotopy $F\:S^{n-1}\x I\imm\R^{m-1}\x I$ between
the standard embedding $g\:S^{n-1}\emb\R^{m-1}$ and some embedding
$g'\:S^{n-1}\emb\R^{m-1}$ such that the conclusion of Theorem \ref{2.7} holds for $F$
in place of $f$.
(Koschorke deals with a slightly more general situation: his $g$ is a
self-transverse immersion, whose double points he wants to eliminate by
a regular homotopy, and his $M$ is a $(m-n)\lambda$-framed null-bordism of
$\Delta_g/t$.)
Now $g'$ is smoothly isotopic to the standard embedding (see \cite{Hae}),
so $F$ can be capped off to a stable smooth immersion $f\:S^n\imm\R^m$
without adding new double points. 
\end{proof}

\begin{example*}[=Theorem \ref{th1}] For each $n=4k+3\ge 15$ there exists a stable smooth
immersion $S^n\imm\R^{2n-7}$ that is $3$-realizable, but is not a $3$-prem.
\end{example*}

\begin{proof} Let $K$ be as in Example \ref{2.6}, with the involution $t$.
Then $K/t$ is the mapping cone of the composition $S^3\xr{f}S^2\xr{p}\RP^2$,
where $p$ is the double covering.
The cylinder of $pf$ properly embeds, via the graph of $pf$, into
$\RP^2\x D^4$.
Hence $K$ minus two symmetric small $4$-balls properly equivariantly embeds
into $S^2_\circ\x D^4$, where $D^4$ has the trivial action of $\Z/2$.
Therefore $K$ equivariantly embeds into $S^3_\circ\x D^4$.
Consider an equivariant regular neighborhood $R$ of $K$ in $S^3_\circ\x D^4$,
and let $M$ be its double $M=\partial(R\x I)$.
Since $M$ equivariantly retracts onto a copy of $K$, by Example \ref{2.6} it does
not admit an equivariant map to $S^2_\circ$, but $S^0_\circ*M$ admits an
equivariant map to $S^3_\circ$.
Since $M/t=\partial((R/t)\x I)$ embeds with trivial line normal bundle into
the parallelizable manifold $\RP^3\x D^5$, it is stably parallelizable.
Since the tangent bundle of $\RP^k$ is stably equivalent to the sum of $k+1$
copies of the canonical line bundle $\gamma$, the bundle $4\gamma$ is stably
trivial over $\RP^3$.
Hence $M/t$ admits a skew $4k\lambda$-framed normal bundle for each $k\ge 0$,
where $\lambda$ is associated with the double covering $M\to M/t$ and so
is the pullback of $\gamma$.
Thus by Theorem \ref{2.7}, for each $n=4k+3\ge 15$, $M$ is the double point locus
of some stable smooth immersion $S^n\imm\R^{2n-7}$. 
\end{proof}

\begin{theorem*}[=Theorem \ref{th3}] Let $N^n$ be a compact smooth manifold
(resp.\ a compact polyhedron), $M^m$ a smooth (PL) manifold and suppose that $2m>3n$.
A stable smooth (PL) map $f\:N\to M$ is $1$-realizable if and only if it is a smooth (PL) $1$-prem.
\end{theorem*}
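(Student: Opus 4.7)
The plan is to combine the two criteria of Theorem \ref{2.1} and Theorem \ref{2.3}, specialized to $k=1$, and to close the remaining gap with a short Borsuk--Ulam argument.

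First I will verify that both theorems apply. Since $m$ and $n$ are integers, $2m>3n$ forces $2m\ge 3n+1$, so $2(m+k)=2m+2\ge 3(n+1)$. In the smooth case the additional clause $3n-2m\le k$ of Theorem \ref{2.1} is automatic, since $3n-2m\le-1\le 1$, and $m\ge n$ is immediate. Thus Theorem \ref{2.1} says that $f$ is a smooth (PL) $1$-prem if and only if there is a $\Z/2$-equivariant map $\Delta_f\to S^0_\circ$, while Theorem \ref{2.3}, combined with the metastable-range equivalence of smooth and PL $k$-realizability, says that $f$ is $1$-realizable if and only if there is a $\Z/2$-equivariant map $\Phi\:S^{m-1}_\circ*\Delta_f\to S^m_\circ$. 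The problem is thus reduced to a purely equivariant-homotopy assertion: existence of $\Phi$ must imply existence of an equivariant map $\Delta_f\to S^0_\circ$.

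An equivariant map $\Delta_f\to S^0_\circ$ exists precisely when the line bundle $\lambda$ over $\Delta_f/t$ associated to the free double cover $\Delta_f\to\Delta_f/t$ is trivial, i.e.\ when $w_1(\lambda)=0$. Suppose, for contradiction, that $w_1(\lambda)\ne 0$. Then there is a loop $\gamma\:S^1\to\Delta_f/t$ on which $\lambda$ restricts to the M\"obius bundle, and hence a $\Z/2$-equivariant lift $\sigma\:S^1_\circ\to\Delta_f$, where $S^1_\circ$ denotes the circle with the free antipodal action. Joining with the identity of $S^{m-1}_\circ$ and composing with $\Phi$ yields a $\Z/2$-equivariant map
\[
S^{m-1}_\circ*S^1_\circ\xr{\mathrm{id}*\sigma}S^{m-1}_\circ*\Delta_f\xr{\Phi}S^m_\circ.
\]
The join of two antipodal actions is the antipodal action on the ambient sphere (as is immediate from the standard realization of the join as the unit sphere in $\R^{m+2}$), so $S^{m-1}_\circ*S^1_\circ$ is equivariantly $S^{m+1}_\circ$. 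This contradicts the Borsuk--Ulam theorem, and consequently $w_1(\lambda)=0$ and $f$ is a $1$-prem.

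The only step I expect to need routine (but not conceptual) care is the reconciliation of the compact-PL setting of Theorem \ref{2.3} with the possibly smooth, possibly noncompact hypotheses of Theorem \ref{th3}. This should be handled exactly as in the deduction of Theorem \ref{th2}: replace $M$ by a compact PL submanifold containing a neighborhood of $f(N)$, and invoke Haefliger's equivalence of smooth, PL and topological $k$-realizability in the metastable range. I do not foresee any genuine obstacle there --- the geometric content of the proof is condensed in the one-line Borsuk--Ulam step above, and the reason the argument works only for $k=1$ is that for $k\ge 2$ the analogous contradiction would require desuspending across the first stage, where the Conner--Floyd range fails.
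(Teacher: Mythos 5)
Your proof is correct, and the overall skeleton (reduce via Theorems \ref{2.1} and \ref{2.3} to an equivariant homotopy question, then show that the obstruction is exactly the nontriviality of the line bundle $\lambda$ over $\Delta_f/t$) matches the paper's. The decisive step, however, is carried out differently. The paper passes to the fully stable statement --- an equivariant map $S^\infty_\circ*\Delta_f\to S^{\infty+1}_\circ$ --- and argues that the twisted Euler class $e(\lambda)\in H^1(\Delta_f/t;\Z_\lambda)$ is a stable invariant via the suspension isomorphism $H^i(S^0_\circ*K;P)\simeq H^{i-1}(K;P\otimes\Z_\lambda)$, so that $e(\lambda)=0$ and $\lambda$ is trivial. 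You instead use only the single join $S^{m-1}_\circ*\Delta_f\to S^m_\circ$ supplied by the statement of Theorem \ref{2.3}: if $w_1(\lambda)\ne 0$ you restrict to an antipodal circle $S^1_\circ\subset\Delta_f$ detected by $w_1$, join with the identity of $S^{m-1}_\circ$, and contradict Borsuk--Ulam with an equivariant map $S^{m+1}_\circ\to S^m_\circ$. This is more elementary (no twisted coefficients, no suspension isomorphism for local systems) and it isolates cleanly why the argument is special to $k=1$: only the primary, one-dimensional obstruction survives a single join. What the paper's route buys is the explicit identification of the obstruction as a stable cohomology class, which fits the Euler-class machinery developed later in \S\ref{s4}. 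Your bookkeeping on hypotheses ($2m>3n\Rightarrow 2(m+1)\ge 3(n+1)$, $m\ge n$, $3n-2m\le 1$) and your handling of the smooth/noncompact reduction via Haefliger's metastable equivalence are both fine and no less careful than the paper's own.
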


\begin{proof} If $f$ is $1$-realizable, by the proof of Theorem \ref{2.3}
there exists a stable equivariant map $\Delta_f\to S^0_\circ$, i.e.\
an equivariant map $S^\infty_\circ*\Delta_f\to S^{\infty+1}_\circ$.
Consider the Euler class $e(\lambda)\in H^1(\Delta_f/t;\Z_\lambda)$ of
the line bundle $\lambda$ associated to the double covering
$\Delta_f\to\Delta_f/t$, where $\Z_\lambda$ is the integral local coefficient
system associated with $\lambda$.
The Euler class is cohomological, hence in particular a stable invariant due
to the natural isomorphism
$H^i(S^0_\circ*K;\,P)\simeq H^{i-1}(K;\,P\otimes\Z_\lambda)$ for any local
coefficient system $P$.
So $e(\lambda)=0$, whence $\lambda$ is orientable and so $\Delta_f$ admits
an equivariant map to $S^0$.
By Theorem \ref{2.1}, $f$ is a $1$-prem. 
\end{proof}

\section{Equivariant stable cohomotopy} \label{s3}

Suppose that $P$ is a pointed polyhedron and $G$ is a finite group acting
on $P$ and fixing the basepoint $*$.
We also assume that $P$ is $G$-homotopy equivalent to a compact polyhedron.
If $V$ is a finite-dimensional $\R G$-module, let $S^V$ be the one-point
compactification of the Euclidean space $V$ with the obvious action of $G$.
If $G$ acts trivially on $V$, we follow a standard convention of denoting
$V$ by the integer $\dim V$.
The equivariant stable cohomotopy group
$$\omega^{V-W}_G(P):=[S^{W+V_\infty}\wedge P, S^{V+V_\infty}]^*_G$$ is
well-defined, where $V_\infty$ denotes a sufficiently large (with respect
to the partial ordering with respect to inclusion) finite-dimensional
$\R G$-submodule of the countable direct sum $\R G\oplus\R G\oplus\dots$
(see \cite{M+}, \cite{Ada}).

Our main interest here lies in the case where $G=\Z/2$, $V$ is the sum of
$m$ copies of the nontrivial one-dimensional representation $T$ of $\Z/2$
(i.e.\ $V$ is $\R^m$ with the sign action of $\Z/2$) and $P=K_+$ (i.e.\
the union of $K$ with a disjoint basepoint), where the action of $\Z/2$
on $K$ is free.
In this case, the following lemma guarantees that $V_\infty$ from the
above definition can be taken to be any $\R[\Z/2]$-module of sufficiently
large dimension.

\begin{lemma}[Bredon--Hauschild; see \cite{M+}*{IX.I.4}, \cite{Ada}*{3.3}] \label{3.1}
The basepoint preserving homotopy set $[P,S^{mT}]^*_{\Z/2}$ surjects to
$\omega^{mT}_{\Z/2}(P)$ if $\dim P\le 2m-1$ and $\dim P^{\Z/2}\le m-1$, and
injects there if $\dim P\le 2m-2$ and $\dim P^{\Z/2}\le m-2$.
\end{lemma}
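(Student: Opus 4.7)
The plan is to prove this as Hauschild's equivariant Freudenthal suspension theorem for $\Z/2$-equivariant cohomotopy. First I would reduce to elementary suspensions: since $\R[\Z/2]$ decomposes as $\R\oplus T$, every sufficiently large $\R G$-submodule $V_\infty$ is a sum of copies of the trivial representation $\R$ and the sign representation $T$, so the stabilization map factors as a composition of elementary suspensions by $\R$ and by $T$. It therefore suffices to verify that each such elementary suspension is bijective (respectively surjective) on $[-,S^{(m+a)T+b}]^*_{\Z/2}$ under the stated dimension hypotheses.

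For each elementary suspension I would apply equivariant obstruction theory on a $\Z/2$-CW decomposition of $P$ that extends a chosen CW structure on $P^{\Z/2}$. Equivariant cells come in two flavors: free orbit cells $\Z/2_+\wedge D^k$ and fixed cells $D^k$ with trivial $\Z/2$-action. On a free orbit cell of dimension $k$, the free-forget adjunction identifies equivariant pointed maps to $S^{mT}$ with ordinary pointed maps $D^k\to S^m$; suspension by either $\R$ or $T$ then acts on these as the classical Freudenthal suspension $\pi_{k-1}(S^m)\to\pi_k(S^{m+1})$, which is bijective for $k-1\le 2m-2$ and surjective for $k-1\le 2m-1$. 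This produces the bound $\dim P\le 2m-c$, with $c=1$ for surjectivity and $c=2$ for injectivity.

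On a fixed cell of dimension $k$, equivariant maps factor through $(S^{mT})^{\Z/2}=S^0$, and the relevant comparison obstructions live in groups built from the pair $(S^{mT},S^0)$. Sign suspension preserves the fixed-point target, while trivial suspension passes $(S^{mT},S^0)\to(S^{mT+1},S^1)$ and raises the effective connectivity of the pair; careful iteration through the stabilization tower shows the fixed-cell obstructions vanish once $\dim P^{\Z/2}\le m-c$. This accounts for the dimension bound on the fixed subspace.

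The hard part will be to control the interplay between the fixed-point data and the attaching maps of free cells into $P^{\Z/2}$: an extension chosen on $P^{\Z/2}$ must be compatible with the free cells attached along it, which couples the two obstruction analyses. I would organize the full argument as an equivariant Postnikov-style obstruction tower indexed by the skeleta of $P$, with obstructions in Bredon cohomology whose coefficient systems combine the two cell-type analyses above; the detailed proof appears in \cite{M+}*{IX.I.4}.
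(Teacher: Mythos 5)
Your route---equivariant obstruction theory over a $\Z/2$-CW structure, with separate analyses of free and fixed cells---is essentially the route of the cited source \cite{M+}, but it is not the paper's: the paper does not reprove the lemma in general, it only remarks that in the single case it needs, $P=K_+$ with the action on $K$ free (so that $P^{\Z/2}$ is just the basepoint), the statement follows from the Pontryagin--Thom construction of Lemma \ref{4.0}(a) together with general position, by comparing embedded with singular $m\lambda$-comanifolds of codimension $m$ in the at most $(2m-1)$-dimensional polyhedron $K/t$. Your free-cell analysis is correct and reproduces exactly the bounds $\dim P\le 2m-1$ (surjectivity) and $\dim P\le 2m-2$ (injectivity).

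The genuine gap is the fixed-cell step, and it cannot be closed by more careful bookkeeping, because the conclusion you are trying to establish there is false. The claim that trivial suspension ``raises the effective connectivity of the pair $(S^{mT},S^0)$'' fails at the very first stage: on $\Z/2$-fixed points, the stabilization map followed by restriction to the fixed sphere $S^N=(S^{V_\infty})^{\Z/2}$ is the unit $S^0\to\Omega^NS^N$, which on $\pi_0$ is the inclusion $\{0,1\}\subset\Z$ and hence is not surjective once $V_\infty$ contains a trivial summand. Concretely, take $P=S^0$ with the trivial involution and any $m\ge 1$: both hypotheses of the lemma hold ($\dim P=\dim P^{\Z/2}=0$), the set $[P,S^{mT}]^*_{\Z/2}$ has two elements, yet the degree of the induced map on fixed spheres defines a surjection $\omega^{mT}_{\Z/2}(P)\to\Z$ (every degree is realized, since a degree-$d$ map $S^N\to S^N$ extends over the free cells of $S^{V_\infty}$ into the highly connected target $S^{mT+V_\infty}$), whereas every class coming from $[P,S^{mT}]^*_{\Z/2}$ has fixed-point degree $0$ or $1$. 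So surjectivity fails, and no analysis of obstructions over non-basepoint fixed cells can yield the stated bound $\dim P^{\Z/2}\le m-1$; the hypothesis actually needed (and automatically satisfied in every application in the paper, where $P=K_+$ or ${\Delta_f}_+\wedge S^1$ with the underlying action free) is that $P^{\Z/2}$ reduces to the basepoint. With that hypothesis your fixed-cell discussion becomes vacuous and the free-cell part alone---or, equivalently, the paper's general-position argument via Lemma \ref{4.0}(a)---completes the proof.
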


In the above mentioned case $P=K_+$, the lemma follows by a general position
argument when the Pontrjagin--Thom construction \ref{4.0}(a) is taken into account.
\medskip

As before, by $S^k_\circ$ we denote the $k$-sphere with the antipodal
involution, i.e.\ the unit sphere in $(k+1)T$.
For $k\ge m$, we have that $S^k_\circ\but S^{m-1}_\circ$ is
$\Z/2$-homeomorphic to $S^{k-m}_\circ\x mT$.
Shrinking to points $S^{m-1}_\circ$ and each fiber $S^{k-m}_\circ\x\{pt\}$,
we get an equivariant map $\rho^k_m\:S^k_\circ\to S^{mT}$.

For any $k$-polyhedron $K$ with a free $\Z/2$ action and a $\Z/2$-invariant
subpolyhedron $L\subset K$, any equivariant map $\ell\:L\to S^{m-1}_\circ$ extends
to an equivariant map $\phi_K^\ell\:K\to S^\infty_\circ$, which is unique up
to equivariant homotopy $\rel L$.
Here $\infty$ may be thought of as a sufficiently large natural number
(specifically, $k+1$ will do).
If $f,g\:K\to S^{m-1}_\circ$ are equivariant extensions of $\ell$, they are
joined by a $\rel L$ equivariant homotopy
$\phi^\ell_{f,g}\:K\x I\to S^\infty_\circ$, which is unique up to equivariant
homotopy $\rel K\x\partial I\cup L\x I$.

\begin{lemma}[Melikhov \cite{M3}] \label{3.2} Let $\Z/2$ act freely on a $k$-polyhedron
$K$ and trivially on $I$.
Let $L$ be a $\Z/2$-invariant subpolyhedron of $K$ and
$\ell\:L\to S^{m-1}_\circ$ an equivariant map.
Suppose that $k\le 2m-3$ for (a), (c) and $k\le 2m-4$ for (b).

(a) $\ell$ extends to an equivariant map $K\to S^{m-1}_\circ$ if and only if
$\rho^\infty_m\phi_K^\ell\:K\to S^{mT}$ is equivariantly null-homotopic
$\rel L$.

(b) Equivariant extensions $f,g\:K\to S^{m-1}_\circ$ of $\ell$ are
equivariantly homotopic $\rel L$ if and only if
$\rho^\infty_m\phi^\ell_{f,g}\:K\x I\to S^{mT}$ is equivariantly
null-homotopic $\rel K\x\partial I\cup L\x I$.

(c) For any equivariant extension $f\:K\to S^{m-1}_\circ$ of $\ell$ and any
equivariant $\rel L$ self-homotopy $H\:K\x I\to S^{mT}$ of the constant map
$K\to *\subset S^{mT}$ there exists an equivariant extension
$g\:K\to S^{m-1}_\circ$ of $\ell$ such that $\rho^\infty_m\phi^\ell_{f,g}$
and $H$ are equivariantly homotopic $\rel K\x\partial I\cup L\x I$.
\end{lemma}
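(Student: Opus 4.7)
The plan is to prove Lemma \ref{3.2} by an equivariant version of Pontryagin's geometric proof of the Freudenthal suspension theorem (cf.\ \cite{FFG} and \cite{M1}*{Lemma 7.7(c)}), exploiting the fact that $\rho^\infty_m$ acts as a Pontryagin--Thom collapse. Represent $\phi_K^\ell$ by a PL map $\tilde\phi\:K\to S^N_\circ$ for some large $N$, and by equivariant PL transversality (which is valid since the $\Z/2$-action on $K$ is free, so by passing to $K/\Z/2$ one applies \cite{BRS}) make $\tilde\phi$ transverse to the invariant subsphere $S^{N-m}_\circ\subset S^N_\circ$ antipodal to $S^{m-1}_\circ$ in the join decomposition $S^N_\circ=S^{m-1}_\circ*S^{N-m}_\circ$. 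An invariant tubular neighborhood of $S^{N-m}_\circ$ in $S^N_\circ$ is equivariantly identified with $S^{N-m}_\circ\x mT$ so that its complement equivariantly deformation retracts onto $S^{m-1}_\circ$, and $\rho^\infty_m\tilde\phi$ is equivariantly homotopic to the Thom collapse of the invariant codimension-$m$ subpolyhedron $Q:=\tilde\phi^{-1}(S^{N-m}_\circ)\subset K$, which has dimension $\le k-m$, is equipped with an invariant $mT$-framing, and is disjoint from $L$.

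For part (a), the ``only if'' direction is immediate since $\rho^\infty_m$ collapses $S^{m-1}_\circ$ to the basepoint. For the converse, let $H\:K\x I\to S^{mT}$ be an equivariant null-homotopy of $\rho^\infty_m\phi_K^\ell$ rel $L$, and make $H$ transverse to $0\in mT\subset S^{mT}$; the invariant preimage $W:=H^{-1}(0)\subset K\x I$ has dimension $\le k+1-m$, meets $K\x\{0\}$ in $Q$, meets $K\x\{1\}$ in $\emptyset$, and is disjoint from $L\x I$. The hypothesis $k\le 2m-3$ gives $2\dim W\le k-1<\dim K$, so by general position we equivariantly isotope $W$ rel $L\cup Q$ to make the projection $W\to K$ an embedding, whereupon $W$ may be pushed vertically into a collar of $K\x\{0\}$. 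The induced equivariant homotopy of $\tilde\phi$ rel $L$ lands in $S^N_\circ\but S^{N-m}_\circ$, which equivariantly retracts onto $S^{m-1}_\circ$ to yield the desired extension of $\ell$. Part (b) follows by applying (a) to the pair $(K\x I,\,L\x I\cup K\x\partial I)$ with equivariant boundary data $\ell\cup f\cup g$ and homotopy $\phi^\ell_{f,g}$ in place of $\phi_K^\ell$; the ambient dimension rises by one, so the hypothesis of (a) becomes $k+1\le 2m-3$, i.e.\ $k\le 2m-4$. Part (c) is the ``realization'' half of the same correspondence: transversality of the given $H$ to $0$ produces an invariant framed subpolyhedron $W\subset K\x I$ with $\partial W\subset K\x\{1\}\cup L\x I$, and since $k\le 2m-3$ we may again make $W\to K$ an embedding; using the $mT$-framing one surgers $f$ along the traces of $W$ to obtain an equivariant extension $g$ of $\ell$ so that $\rho^\infty_m\phi^\ell_{f,g}$ is equivariantly homotopic rel $K\x\partial I\cup L\x I$ to $H$.

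The main obstacle is the careful bookkeeping of invariant framings, tubular neighborhoods, and general-position isotopies, ensuring that every modification respects both the $\Z/2$-action and the boundary condition. Because the action on $K$ is free, equivariant transversality and general position reduce, via $K\to K/\Z/2$, to their non-equivariant PL counterparts in \cite{BRS}, so no genuinely new obstructions appear beyond the dimension counts $2\dim W<\dim(\text{ambient})$, which specialize to $k\le 2m-3$ in (a) and (c) and to $k\le 2m-4$ in (b).
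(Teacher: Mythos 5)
You should note at the outset that the paper itself does not prove Lemma \ref{3.2}: it is quoted from \cite{M3}. So the comparison can only be with the framework the paper uses for the neighbouring statements, namely the remark that Lemmas \ref{2.4}(a) and \ref{3.1} ``follow by a general position argument'' once the homotopy sets are rewritten geometrically via the Pontrjagin--Thom construction \ref{4.0}(a), and the detailed Freudenthal-style proof of Lemma \ref{2.4}(b). Your strategy is exactly in that spirit, and most of your steps are sound: the translation of $[\rho^\infty_m\phi^\ell_K]$ into the framed cobordism class of $Q=\tilde\phi^{-1}(S^{N-m}_\circ)$, the disjointness of $Q$ and $W$ from $L$ and $L\times I$, the ``only if'' directions, the reduction of (b) to (a) applied to $(K\times I,\,K\times\partial I\cup L\times I)$ (which correctly explains the shift from $2m-3$ to $2m-4$), and the count $2\dim W\le 2(k+1-m)\le k-1<\dim K$ that lets you make the projection $W\to K$ injective under $k\le 2m-3$.

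The gap is in the final step of the ``if'' direction of (a), and it recurs in (c). Having made $\pi|_W$ an embedding and pushed $W$ into a collar of $K\times\{0\}$, you invoke ``the induced equivariant homotopy of $\tilde\phi$'' into $S^N_\circ\but S^{N-m}_\circ$. But $H$ is a null-homotopy of $\rho^\infty_m\tilde\phi$, a map into $S^{mT}$, not of $\tilde\phi$; isotoping $W$ modifies $H$ (compose with the ambient isotopy) but induces no homotopy of $\tilde\phi$ at all, and confining $W$ to a collar only says that $H_t$ avoids $0$ for $t\ge\eps$ --- which is vacuous, since $S^{mT}\but\{0\}$ is equivariantly contractible to $\infty$. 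Converting a framed null-cobordism of $Q$ back into a homotopy of $\tilde\phi$ (equivalently, into a nowhere-zero section of $m\lambda$ extending the one given by $\ell$) is precisely the nontrivial content of the lemma; it is exactly what fails one dimension higher, as the Example following Lemma \ref{3.2} shows for $k=2m-2$. What is missing is the zero-elimination construction: from the embedding $W'=\pi(W)\subset K$ one obtains a splitting $\nu(W\subset K\times I)\simeq\nu(W'\subset K)\oplus\eps$ of the $mT$-framing, with $\eps$ the vertical direction, and one must use this splitting to produce, over a regular neighbourhood of $W'$ (which must moreover be shown to carry a normal disc bundle structure --- not automatic for a subpolyhedron of a polyhedron), a nowhere-vanishing section of $m\lambda$ agreeing with the original one near the frontier of that neighbourhood. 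Alternatively, the geometry can be bypassed by iterating the desuspension Lemma \ref{2.4}(a)/(b) together with Lemma \ref{3.1}. The same missing construction (a ``twist'' of $f$ along $V'=\pi(V)$) is what your phrase ``one surgers $f$ along the traces of $W$'' in part (c) would have to mean. As written, the proof stops exactly where the work begins.
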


\begin{example} Lemma \ref{3.2}(a) with $L=\emptyset$ does not hold for $k=2m-2=4$.

Indeed, let $K$ be the $4$-dimensional $\Z/2$-polyhedron of Example \ref{2.6}.
Thus there exists an equivariant map $f\:S^0_\circ*K\to S^3_\circ$.
Then by the trivial impliciation in Lemma \ref{3.2}(a), the composition 
$S^0_\circ*K\xr{\phi_{S^0_\circ*K}}S^\infty_\circ\xr{\rho^\infty_4} S^{4T}$ 
is equivariantly null-homotopic.
Consequently, the composition 
$S^0_\circ*K\xr{S^0_\circ*\phi_K}S^0_\circ*S^\infty_\circ\xr{\rho^{\infty+1}_4} S^{4T}$ 
is also equivariantly null-homotopic by a homotopy $H$.
The latter composition sends $S^0_\circ$ into $S^{4T}\but\R^{4T}=\{\infty\}$, and we 
may assume that $H$ sends $S^0_\circ\x I$ into $\infty$ (otherwise it can be amended 
by shrinking the loop $H(S^0_\circ\x I)$).
Since the diagram
\[\begin{CD}
S^0_\circ*K@>S^0*\phi_K>>S^0*S^\infty_\circ@>\rho^{\infty+1}_4>>S^{4T}\\
@VVV@VVV@|\\
S^T\vee K_+@>S^T\vee\phi_K>>S^T\vee (S^\infty_\circ)_+@>S^T\vee\rho^\infty_3>>
S^T\wedge S^{3T}
\end{CD}\]
commutes, we get an equivariant null-homotopy of the bottom line.
Hence by Lemma \ref{3.1} the composition $K\xr{\phi_K}S^\infty_\circ\xr{\rho^\infty_3}S^{3T}$
is equivariantly null-homotopic.
If Lemma \ref{3.2}(a) with $L=\emptyset$ were true for $k=2m-2=4$, this would imply
that there exists an equivariant map $K\to S^2_\circ$, contradicting Example \ref{2.6}.
\end{example}

\subsection{Proof of Theorem \ref{th4}}

Given a compact $n$-polyhedron $X$ and a map $f\:X\to M$ into a PL $m$-manifold, we let
$$\Theta(f):=[\rho^\infty_m\phi_{\tl f}]\in
[\tl X_+,S^{mT}]^{\rel\tl X_+\but\Delta_f}_{\Z/2}=
\omega^{mT}_{\Z/2}(\tl X_+,\tl X_+\but\Delta_f).$$

\begin{theorem}\label{3.3} Let $X^n$ be a compact polyhedron, $M^m$ a PL manifold and 
let $f\:X\to M$ be a stable PL map.
Suppose that $2(m+k)\ge 3(n+1)$ in (a), (c) and $2(m+k)>3(n+1)$ in (b).

(a) $f$ is $k$-realizable if and only if
$\Theta(f)=0\in\omega^{kT}_{\Z/2}({\Delta_f}_+)$.

(b) $k$-realizations $g,g'\:X\emb M\x\R^k$ are isotopic through
$k$-realizations of $f$ if and only if
$\Theta_f(g,g')=0\in\omega^{kT-1}_{\Z/2}({\Delta_f}_+)$.

(c) If $g$ is a $k$-realization of $f$, for each
$\alpha\in\omega^{kT-1}_{\Z/2}({\Delta_f}_+)$ there exists a $k$-realization
$g'$ of $f$ such that $\Theta_f(g,g')=\alpha$.
\end{theorem}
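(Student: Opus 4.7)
The strategy is to combine the geometric criterion of Theorem~\ref{2.3} with the equivariant-Thom-collapse criterion of Lemma~\ref{3.2}, translating each of the three statements into a question about equivariant stable cohomotopy.

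For part (a), I first reformulate Theorem~\ref{2.3} as the equivalent statement that $f$ is $k$-realizable if and only if $\tilde f\:\tilde X\but\Delta_f\to S^{m-1}_\circ$ admits an equivariant extension to $\tilde X\to S^{m+k-1}_\circ$, where $S^{m-1}_\circ\subset S^{m+k-1}_\circ$ is the standard equatorial inclusion. This equivalence is just the normal-disc-bundle trivialization argument already carried out in the proof of Theorem~\ref{2.3}. Passing to the deformation-retract pair $(\check X,\check\Delta_f)$ so that Lemma~\ref{3.2} applies to subpolyhedra, I apply Lemma~\ref{3.2}(a) with $K=\check X$, $L$ the complement of an equivariant regular neighborhood of $\check\Delta_f$, $\ell=\tilde f$, and target $S^{m+k-1}_\circ$ (so the lemma's ``$m$'' is our $m+k$). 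The dimension hypothesis $\dim K=2n\le 2(m+k)-3$ follows from $2(m+k)\ge 3(n+1)$. The conclusion: the extension exists iff $\rho^\infty_{m+k}\phi^{\tilde f}_K\:K\to S^{(m+k)T}$ is equivariantly null-homotopic $\rel L$.

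To match this with $\Theta(f)=0\in\omega^{kT}_{\Z/2}({\Delta_f}_+)$, observe that $\rho^\infty_{m+k}\phi^{\tilde f}_K$ is constant on $L$ and so factors through $K_+/L_+$, which by excision is equivariantly the Thom space $\mathrm{Th}(\nu)$ of the normal bundle $\nu$ of $\Delta_f\subset\tilde X$. Stability of $f$ provides the needed transversality, and the factor-exchange action on the normal bundle of $\Delta_M\subset M\x M$ makes $\nu$ an equivariant $m$-plane bundle with fiber representation $mT$. The equivariant Thom isomorphism
$$\omega^{(m+k)T}_{\Z/2}(\mathrm{Th}(\nu))\cong\omega^{kT}_{\Z/2}({\Delta_f}_+)$$
then carries the class of $\rho^\infty_{m+k}\phi^{\tilde f}_K$ to (the suspension-stabilization of) $\Theta(f)$, because $\rho^\infty_{m+k}$ is by construction the equivariant Thom collapse onto $S^{(m+k)T}$.

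Parts (b) and (c) follow the same scheme, with Lemma~\ref{3.2}(b), (c) applied to $K\x I$ $\rel L\x I\cup K\x\partial I$: two $k$-realizations correspond to two equivariant extensions of $\tilde f$ to $\tilde X\to S^{m+k-1}_\circ$, an isotopy through $k$-realizations to an equivariant $\rel L$ homotopy between them, and a prescribed $\alpha$ to the prescribed null-homotopy of the constant map into $S^{(m+k)T}$ from Lemma~\ref{3.2}(c). The obstruction now lies in $\omega^{kT-1}_{\Z/2}({\Delta_f}_+)$ after Thom isomorphism, and the stricter hypothesis $2(m+k)>3(n+1)$ supplies the stricter dimension bound of Lemma~\ref{3.2}(b). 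I expect the main technical obstacle to be precisely this last identification: verifying that the equivariant-Thom-collapse obstruction produced by Lemma~\ref{3.2} really agrees, after excision and equivariant Thom isomorphism, with the class $\Theta(f)$ defined at the beginning of \S\ref{s3}; this is essentially a bookkeeping exercise built on the canonical $mT$-structure on $\nu$ that is already implicit in the proof of Theorem~\ref{2.3}.
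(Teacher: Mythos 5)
Your plan follows the paper's proof essentially verbatim: the paper likewise excises down to the equivariant regular neighborhood $R$ of $\check\Delta_f$, identifies $R/\Fr R$ equivariantly with ${\Delta_f}_+\wedge S^{mT}$, desuspends to $\omega^{kT}_{\Z/2}({\Delta_f}_+)$, and then invokes Theorem \ref{2.2} together with Lemma \ref{3.2}(a,b,c). The one ingredient you use without naming it --- the correspondence between isotopies through $k$-realizations and isovariant homotopies of the squared maps --- is exactly Theorem \ref{3.4}, which the paper quotes for precisely this purpose.
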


\begin{proof} Let $R$ be as in the proof of Theorem \ref{2.3}, so
$$\omega^{(m+k)T}_{\Z/2}(\tl X_+,\tl X_+\but\Delta_f)\simeq
\omega^{(m+k)T}_{\Z/2}(\tl X_+,\Cl{\check X_+\but R}).$$
By the proof of Theorem \ref{2.3}, $R/\Fr R$ is equivariantly homotopy equivalent
with ${\Delta_f}_+\wedge S^{mT}$.
Therefore
$$\omega^{(m+k)T}_{\Z/2}(\tl X_+,\Cl{\check X_+\but R})\simeq
\omega^{(m+k)T}_{\Z/2}(R_+,{\Fr R}_+)\simeq\omega^{kT}_{\Z/2}({\Delta_f}_+).
$$
Assertion (a) now follows from Theorem \ref{2.2} and Lemma \ref{3.2}(a).
Similarly (b) and (c) follow (see \cite{M3} for some details) from
the ``moreover'' part of Theorem \ref{2.2} and its parametric version below
(Theorem \ref{3.4}) using Lemma \ref{3.2}(b,c). 
\end{proof}

\begin{theorem}\label{3.4} {\rm (Melikhov \cite{M1}*{7.9(a)})} Let $X^n$ be a compact polyhedron, 
$M^m$ a PL manifold and $f\:X\to M$ a PL map, where $2m>3(n+1)$.
If embeddings $g,g'\:X\emb M$ are $C^0$-close to $f$, they are isotopic
through $C^0$-approximations of $f$ if and only if $g\x g$ and $g'\x g'$
are isovariantly homotopic through $C^0$-approximations of $f\x f$.
\end{theorem}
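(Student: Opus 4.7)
The ``only if'' direction is immediate: an isotopy $g_t\:X\emb M$ through $C^0$-approximations of $f$ induces the isovariant homotopy $g_t\x g_t\:X\x X\to M\x M$ through $C^0$-approximations of $f\x f$. For the converse, the plan is to run the standard ``product with interval'' trick, reducing a parametric statement in dimensions $n,m$ to an absolute statement in dimensions $n+1,m+1$, where we can invoke Theorem \ref{2.2}(b) directly.

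Set $\bar X := X\x I$, $\bar M := M\x I$ and $\bar f\:\bar X\to\bar M$, $\bar f(x,t):=(f(x),t)$. The hypothesis $2m>3(n+1)$ gives $2(m+1)\ge 3((n+1)+1)$, so $\bar f$ lies in the metastable range of Theorem \ref{2.2}(b). Given an isovariant homotopy $H_s\:X\x X\to M\x M$ between $g\x g$ and $g'\x g'$ through $C^0$-approximations of $f\x f$, I would manufacture an isovariant map
\[
\bar H((x,s),(y,t)) := \bigl((\pi_1 H_{(s+t)/2}(x,y),\,s),\ (\pi_2 H_{(s+t)/2}(x,y),\,t)\bigr),
\]
where $\pi_1,\pi_2$ are the factor projections on $M\x M$. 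Isovariance holds because the $I$-coordinates separate the two factors whenever $s\ne t$, and $H_s$ is isovariant when $s=t$; uniform closeness of $H_s$ to $f\x f$ yields $C^0$-closeness of $\bar H$ to $\bar f\x\bar f$, and $\bar H$ restricts to $g\x g$ on $(X\x\{0\})^2$ and to $g'\x g'$ on $(X\x\{1\})^2$. Applying the relative refinement of Theorem \ref{2.2}(b) to $\bar f$ and $\bar H$ (the absolute ``moreover'' clause together with the standard trick of extending the prescribed embedding $g\sqcup g'$ on $\partial\bar X$ across a collar), one obtains a PL embedding $\bar g\:\bar X\emb\bar M$ restricting to $g\sqcup g'$ on $\partial\bar X$, $C^0$-close to $\bar f$, and with $\bar g\x\bar g$ isovariantly homotopic to $\bar H$ rel $\partial\bar X$.

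The main remaining step, and the chief technical obstacle, is to level-preserve $\bar g$. Write $\bar g(x,t)=(a(x,t),b(x,t))\in M\x I$; then $b\:\bar X\to I$ is PL, $C^0$-close to the projection $\mathrm{pr}_I$, and agrees with $\mathrm{pr}_I$ on $\partial\bar X$. Provided the $C^0$-approximation in the previous step is fine enough, the PL self-map $\phi\:\bar X\to\bar X$, $\phi(x,t):=(x,b(x,t))$, is a PL homeomorphism close to the identity and restricting to the identity on $\partial\bar X$. Its inverse $\phi^{-1}(x,t)=(x,c(x,t))$ has $c(x,t)$ close to $t$, and
\[
\bar g\circ\phi^{-1}(x,t)=\bigl(a(x,c(x,t)),\,t\bigr)=(g_t(x),t)
\]
defines the desired isotopy $g_t$, with $g_0=g$, $g_1=g'$, and each $g_t$ a $C^0$-approximation of $f$. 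The delicate point is to track constants through Theorem \ref{2.2}(b) tightly enough for $\phi$ to be a PL homeomorphism and for the resulting $g_t$ to remain within the prescribed $C^0$-neighborhood of $f$; this is the part of the argument where one has to be genuinely careful, though no new ideas beyond the machinery already invoked seem to be required.
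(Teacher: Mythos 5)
The paper does not actually prove Theorem \ref{3.4}; it is quoted verbatim from \cite{M1}*{7.9(a)}, so there is no internal proof to compare against. Judged on its own terms, your reduction to the cylinder $\bar f=f\x\id_I\:X\x I\to M\x I$ is the natural strategy (and the dimension count $2(m+1)\ge 3(n+2)$ does follow from $2m>3(n+1)$ for integers), and your map $\bar H$ is indeed isovariant and $C^0$-close to $\bar f\x\bar f$. But the argument has two genuine gaps, one of which is fatal as written.

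First, the relative version of Theorem \ref{2.2}(b) that produces $\bar g$ with $\bar g|_{\partial\bar X}=g\sqcup g'$ is not the stated ``moreover'' clause plus a collar: the absolute theorem gives you \emph{some} embedding near $\bar f$ with the right isovariant homotopy class, and nothing forces it to agree with the prescribed embeddings on $X\x\{0,1\}$. A genuinely relative criterion is what is needed (it is available in \cite{M1}, but you must invoke it, not derive it from the absolute case). Second, and more seriously, the straightening step is false. $C^0$-closeness of $b\:\bar X\to I$ to $\mathrm{pr}_I$ does not make $\phi(x,t)=(x,b(x,t))$ injective: a map $C^0$-close to the identity need not be a homeomorphism, because $b(x,\cdot)$ can oscillate (be non-monotone in $t$) while staying within any prescribed $\eps$ of $t$, and no tracking of constants through Theorem \ref{2.2}(b) can rule this out. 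What you have produced in $\bar g$ is a \emph{concordance} between $g$ and $g'$, and passing from a concordance to an isotopy is a real theorem (of Hudson type for manifolds, and part of what \cite{M1}*{\S 7} establishes in the metastable range for polyhedra), not a reparametrization. So the overall plan is salvageable, but the step you describe as requiring ``no new ideas'' is precisely where the substantive content of \cite{M1}*{7.9(a)} lies.
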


By combining Theorem \ref{3.3}(a,b), Lemma \ref{3.1} (applied to ${\Delta_f}_+$ and
${\Delta_f}_+\wedge S^1$), the absolute case ($L=\emptyset$) of
Lemma \ref{3.2}(a,c), and Theorem \ref{2.1}, we get a slightly different proof of
Theorem \ref{th2}, and also its relative version

\begin{theorem}\label{3.5} Let $N^n$ be a compact smooth manifold
(resp.\ a compact polyhedron), $M^m$ a smooth (PL) manifold and $f\:N\to M$ 
a stable smooth (PL) map, where $m\ge n$, $2n-m\le 2k-3$, and also $3n-2m<2k-3$.
In the smooth case, assume additionally that either $f$ is a fold map or $3n-2m\le k$.
Then every $k$-realization of $f$ is isotopic through $k$-realizations
of $f$ to a smooth (PL) vertical lift of $f$.
\end{theorem}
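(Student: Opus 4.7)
The plan is to reduce the isotopy assertion to an equivariant cohomotopy comparison in which the dimensional hypotheses make the relevant stable invariant computable from unstable data. Let $g\:N\emb M\x\R^k$ be a $k$-realization of $f$. Since $2n-m\le 2k-3$ and (in the smooth case) $f$ is a fold map or $3n-2m\le k$, Theorem \ref{th2} applies, so $f$ is a smooth (PL) $k$-prem; after rescaling the $\R^k$-factor as in the proof of Proposition \ref{1.2}, a vertical lift can be arranged to be a $k$-realization $g_0\:N\emb M\x\R^k$ of $f$. By Theorem \ref{2.1}, $g_0$ corresponds to an equivariant map $\phi_0=\tl g_0|_{\Delta_f}\:\Delta_f\to S^{k-1}_\circ$. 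By Theorem \ref{3.3}(b), $g$ and $g_0$ are isotopic through $k$-realizations of $f$ if and only if $\Theta_f(g,g_0)=0\in\omega^{kT-1}_{\Z/2}({\Delta_f}_+)$; since this obstruction need not vanish for the initial choice of $g_0$, the strategy is to modify $g_0$ to a vertical lift $g_1$ of $f$ satisfying $\Theta_f(g_1,g_0)=\Theta_f(g,g_0)$, which then forces $\Theta_f(g,g_1)=0$.

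To construct $g_1$, first invoke Lemma \ref{3.1} with $P={\Delta_f}_+\wedge S^1$: since $\dim P=\dim\Delta_f+1\le(2n-m)+1\le 2k-2$ and $P^{\Z/2}$ is just the basepoint (the $\Z/2$-action on $\Delta_f$ being free), the lemma yields a bijection
$$[{\Delta_f}_+\wedge S^1,\,S^{kT}]^*_{\Z/2}\xr{\sim}\omega^{kT-1}_{\Z/2}({\Delta_f}_+).$$
Lift $\Theta_f(g,g_0)$ to an equivariant self-homotopy $H\:\Delta_f\x I\to S^{kT}$ of the constant map at the basepoint. Then apply the absolute ($L=\emptyset$) case of Lemma \ref{3.2}(c) with $K=\Delta_f$, $m=k$, and equivariant extension $\phi_0$: the hypothesis $\dim K\le 2m-3$ reads $\dim\Delta_f\le 2k-3$, which holds, so the lemma produces an equivariant $\phi_1\:\Delta_f\to S^{k-1}_\circ$ with $\rho^\infty_k\phi^\emptyset_{\phi_0,\phi_1}$ equivariantly homotopic to $H$ rel $\Delta_f\x\partial I$. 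The ``moreover'' clause of Theorem \ref{2.1} realizes $\phi_1$ as $\tl g_1|_{\Delta_f}$ for some vertical lift $g_1\:N\emb M\x\R^k$ of $f$.

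The principal obstacle is the naturality assertion that $\Theta_f(g_1,g_0)$ corresponds, under the bijection above, to the class $[\rho^\infty_k\phi^\emptyset_{\phi_0,\phi_1}]=[H]$. This is a compatibility check between two constructions of the obstruction class: the one in Theorem \ref{3.3}, which uses the equivariant map on a regular neighborhood of $\Delta_f$ in $\tl N$ together with the equivalence $R/\Fr R\simeq{\Delta_f}_+\wedge S^{mT}$ from the proof of Theorem \ref{2.3}, versus the classifying maps $\tl g_i|_{\Delta_f}$ supplied by Theorem \ref{2.1}; the two agree up to the canonical join/suspension identification, so that the difference of classifying maps of two vertical lifts translates via $\rho^\infty_k$ precisely to the $\Theta_f$-difference. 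Granting this identification, $\Theta_f(g,g_1)=\Theta_f(g,g_0)-\Theta_f(g_1,g_0)=0$, and Theorem \ref{3.3}(b) (which rests on Theorem \ref{3.4}, hence on the strict inequality $3n-2m<2k-3$) supplies the desired isotopy from $g$ to the vertical lift $g_1$ through $k$-realizations of $f$.
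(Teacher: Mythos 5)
Your proposal is correct and assembles exactly the ingredients the paper itself lists for Theorem \ref{3.5} (Theorem \ref{3.3}(a,b), Lemma \ref{3.1} applied to ${\Delta_f}_+\wedge S^1$, the absolute case of Lemma \ref{3.2}(c), and the ``moreover'' clause of Theorem \ref{2.1}), in the intended order. The compatibility you flag as the ``principal obstacle'' is built into the identification $R/\Fr R\simeq{\Delta_f}_+\wedge S^{mT}$ used in the construction of $\Theta_f$ in the proof of Theorem \ref{3.3}, so your argument is if anything more explicit than the paper's one-sentence proof.
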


\begin{example*}[=Theorem \ref{th4}] For $n\ge 7$ there exists a stable smooth immersion
$f\:S^n\imm\R^{2n-2}$ and its $2$-realization $g\:S^n\emb\R^{2n}$,
not isotopic through $2$-realizations of $f$ to any vertical lift of $f$.
\end{example*}

\begin{proof} Let $f$ be such that $\Delta_f$ is $\Z/2$-homeomorphic to
$S^2\x S^0_\circ$ (cf.\ Theorem \ref{2.7}).
Clearly, $f$ lifts to an embedding $g_0\:S^n\emb\R^{2n-1}\subset\R^{2n}$ (cf.\ Remark \ref{2.1'}).
Let $\alpha$ be the generator of
$\omega^{2T-1}_{\Z/2}({\Delta_f}_+)\simeq\pi_{2+\infty}(S^{1+\infty})
\simeq\Z/2$.
By Theorem \ref{3.3}(c), there exists a $2$-realization $g$ of $f$ with
$\Theta_f(g_0,g)=\alpha$.
If $g_1\:S^n\emb\R^{2n}$ is an embedding, projecting onto $f$, then
$\tl g_1|_{\Delta_f}\:\Delta_f\to S^1_\circ$ is equivariantly homotopic
to $\tl g_0$ due to $\pi_2(S^1)=0$.
Hence $\Theta_f(g_1,g_0)=0$ and $\Theta_f(g_1,g)=\alpha$, so $g$ cannot be
isotopic to any such $g_1$ through $2$-realizations of $f$. 
\end{proof}

\subsection{Proof of Theorem \ref{th5}}

The following lemma improves on the absolute case ($L=\emptyset$) of
Lemma \ref{3.2}(a).
As shown by the proof of Theorem \ref{th4}, the analogous strengthening of
the absolute case of Lemma \ref{3.2}(c) is not true.

\begin{lemma}\label{3.7} Let $K$ be a polyhedron with a free action of $\Z/2$,
and suppose that $m=2,4$ or $8$.
In the case $m=8$ assume additionally $\dim K\le 22$.
There exists an equivariant map $K\to S^{m-1}_\circ$ if and only if the composition
$K\xr{\phi_K}S^\infty_\circ\xr{\rho^\infty_m}S^{mT}$ is equivariantly
null-homotopic.
\end{lemma}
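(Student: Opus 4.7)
The ``only if'' direction is a direct construction: given an equivariant map $f\:K\to S^{m-1}_\circ$, take $\phi_K$ to be the composition $K\xr{f}S^{m-1}_\circ\emb S^\infty_\circ$, and note that since $\rho^\infty_m$ collapses $S^{m-1}_\circ$ to the basepoint $\infty$ of $S^{mT}$, the composite $\rho^\infty_m\phi_K$ is the constant map at $\infty$, which is equivariantly null-homotopic.

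For the ``if'' direction, the main point is to go beyond the stable range $\dim K\le 2m-3$ in which the conclusion is already furnished by Lemma \ref{3.2}(a). My plan is to exploit the division algebra structure on $\R^m$ for $m=2,4,8$ (coming from $\C$, $\mathbb H$, $\mathbb O$): it endows $S^{m-1}$ with an $H$-space structure and yields the classical James splitting $\Omega S^m\simeq S^{m-1}\x\Omega S^{2m-1}$. I would first pass to the orbit space $B=K/\Z/2$ with its associated line bundle $\lambda$: equivariant maps $K\to S^{m-1}_\circ$ correspond to sections of the sphere bundle $S(m\lambda)\to B$, whose existence is governed by a tower of obstructions in $H^{m+i}(B;\pi_{m+i-1}(S^{m-1}))$ (appropriately twisted by $\lambda$) for $i=0,1,2,\dots$, beginning with the Euler class. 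The hypothesis $\rho^\infty_m\phi_K\simeq 0$ similarly translates into the vanishing of a tower of obstructions in $H^{m+i}(B;\pi_{m+i}(S^m))$.

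The key step is that the James splitting provides, for $m=2,4,8$ and every $i\ge 0$, a natural direct sum decomposition
\[
\pi_{m+i}(S^m)\cong\pi_{m+i-1}(S^{m-1})\oplus\pi_{m+i}(S^{2m-1})
\]
(with the first summand split off by suspension and the second detected by the Hopf invariant). Under this decomposition the section obstructions appear as a direct summand of the obstructions for $\rho^\infty_m\phi_K$ to be null, so vanishing of the latter automatically forces vanishing of the former, producing the desired equivariant map $K\to S^{m-1}_\circ$ by the standard Postnikov lifting argument.

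The main obstacle, which accounts for the additional restriction $\dim K\le 22$ in the octonionic case $m=8$, is to verify the compatibility of the James splitting with the equivariant twisted cohomology machinery in all the relevant dimensions. For $m=2,4$ the $H$-space $S^{m-1}$ is an honest topological group (namely $S^1$ and $\mathrm{Sp}(1)$), so the splitting is natural and the argument goes through without any dimensional restriction on $K$; for $m=8$, by contrast, the non-associativity of the octonions limits the naturality of the James splitting to a bounded range, which a careful Postnikov-tower analysis identifies as $\dim K\le 22$.
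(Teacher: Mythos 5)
Your ``if'' direction is where the real content is, and the strategy you propose --- exploit the division algebra structure, pass to the orbit space $B=K/(\Z/2)$, and compare two towers of obstructions via a splitting of homotopy groups --- points in the right direction, but there is a genuine gap in the key step. You correctly observe that the coefficient groups line up via the EHP/James decomposition
$\pi_{m+i}(S^m)\cong\pi_{m+i-1}(S^{m-1})\oplus\pi_{m+i}(S^{2m-1})$,
but a splitting of \emph{coefficient groups} does not by itself yield a splitting of the \emph{obstruction classes}, which are defined only relative to choices of partial section (resp.\ partial null-homotopy) on lower skeleta, and are secondary operations once the first (Euler class) obstruction vanishes. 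To carry out the inductive Postnikov argument you need a natural comparison map from the section problem for $S(m\lambda)\to B$ to the null-homotopy problem for $\rho^\infty_m\phi_K$, compatible with the James splitting at every stage; that compatibility is precisely what must be proven. Asserting that ``the section obstructions appear as a direct summand'' is exactly the lemma restated, not an argument for it. In fact what makes this compatibility hold for $m=2,4,8$ is a single piece of geometry that your write-up does not use: the Hopf fibration $S^{m-1}\to S^\infty\to\KP^\infty$ is $\Z/2$-equivariant (antipodal upstairs, trivial downstairs) and factors through the cosphere $S^\infty_\circ/S^{m-1}_\circ$ through which $\rho^\infty_m$ factors. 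That lets one lift a null-homotopy of $h\phi_K\:K\to\KP^\infty$ directly up the fibration to an equivariant homotopy of $\phi_K$ into a fiber $S^{m-1}$, bypassing the obstruction-theoretic bookkeeping entirely. This is the paper's proof, and it is what your Postnikov argument is implicitly relying on without making it explicit.

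Your diagnosis of the $m=8$ restriction is also not right. The James splitting $\Omega S^8\simeq S^7\times\Omega S^{15}$ is an unconditional space-level equivalence with no bounded-dimension caveat; non-associativity does not ``limit its naturality.'' The actual source of the constraint $\dim K\le 22$ is that there is no octonionic $OP^\infty$: the Hopf bundle only exists as $S^7\to S^{23}\to OP^2$ over the $16$-dimensional Cayley plane. One must therefore arrange that the null-homotopy of the factored map lives inside $S^{23}_\circ/S^7_\circ$, which forces $\dim K\le 22$ so that general position applies. Fixing your argument would mean substituting the Hopf-fibration lifting step for the asserted ``direct summand of obstructions'' claim, at which point you would be reproducing the paper's proof.
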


\begin{proof} $\rho^\infty_m$ is the composition of the quotient map
$q$ of $S^\infty_\circ$ onto the ``cosphere''
$S_m:=S^\infty_\circ/S^{m-1}_\circ$ and a fibration $S_m\to S^{mT}$ (with all
fibers $S^\infty$ except for one singleton fiber).
Hence the composition $K\xr{\phi_K}S^\infty_\circ\xr{q}S_m$ is equivariantly
null-homotopic.
In the case $m=8$ the additional hypothesis allows to assume that
the null-homotopy of $q\phi_K$ lies in $S^{23}_\circ/S^7_\circ$.
Now consider the diagram
\[\begin{CD}
S^{m-1}_\circ @.\quad\subset\quad@.S^\infty_\circ @>q>>S_m\\
@.@.@V{h}VV\hskip -55pt\swarrow\\
@.@.\KP^\infty
\end{CD}\]
where $\KP^\infty$ denotes $\CP^\infty$ if $m=2$, the quaternionic infinite
projective space $QP^\infty$ if $m=4$ and the octonionic (also known as
Cayley) projective plane $OP^2$ if $m=8$, and $h$ is the standard Hopf bundle
with fiber $S^{m-1}$ (regarded as a partial map defined on
$S^{23}\subset S^\infty$ in the case $m=8$).
Since $h$ is equivariant with respect to the antipodal involution on
$S^\infty$ and the identity on $\KP^\infty$, we may identify
the distinguished $S^{m-1}_\circ\subset S^\infty_\circ$ with one of its fibers.
Then $h$ factors through $S_m$, so the composition
$K\xr{\phi_K}S^\infty\xr{q}S_m\xr{}\KP^\infty$ is null-homotopic.
Since $h$ is a fibration, this null-homotopy lifts to an equivariant
homotopy of $\phi_K$ to a map $K\to S^{m-1}$. 
\end{proof}

Lemma \ref{3.7} fits to replace the absolute case of Lemma \ref{3.2}(a) in the above
alternative proof of Theorem \ref{th2}, thus proving

\begin{theorem*}[=Theorem \ref{th5}] Let $N^n$ be a compact smooth manifold (resp.\
a compact polyhedron), $M^{2n-2k+2}$ a smooth (PL) manifold and $f\:N\to M$ 
a stable smooth (PL) map, where $n\ge 2k-1$.
In the smooth case, assume additionally that either $f$ is a fold map or $n\ge 3k-4$.
If $f$ is $k$-realizable and $k\in\{2,4,8\}$, then $f$ is a smooth (PL) $k$-prem.
\end{theorem*}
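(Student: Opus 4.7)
The plan is to repeat verbatim the ``alternative proof of Theorem \ref{th2}'' sketched right after Theorem \ref{3.5}, but with the absolute case of Lemma \ref{3.2}(a) replaced by the stronger Lemma \ref{3.7}. The first step is to note that $m=2n-2k+2$ together with $n\ge 2k-1$ gives $2(m+k)=4n-2k+4\ge 3(n+1)$, so we sit exactly on the edge of the metastable range and Theorem \ref{3.3}(a) applies. By that theorem, $k$-realizability of $f$ is equivalent to the vanishing of $\Theta(f)\in\omega^{kT}_{\Z/2}(\Delta_{f+})$.

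Next, I would promote this stable vanishing to an unstable one via Lemma \ref{3.1}. Since $\Z/2$ acts freely on $\Delta_f$, the fixed set of $\Delta_{f+}$ is just the basepoint, so $\dim(\Delta_{f+})^{\Z/2}=0\le k-2$ (using $k\ge 2$). Moreover, $\dim\Delta_f\le 2n-m=2k-2$. These dimensions meet exactly the injectivity thresholds in Lemma \ref{3.1} (with its $m$ set equal to our $k$), so the unstable class $[\Delta_{f+},S^{kT}]^*_{\Z/2}$ corresponding to $\Theta(f)=0$ is itself zero; concretely, the map $\Delta_f\xr{\phi_{\Delta_f}}S^\infty_\circ\xr{\rho^\infty_k}S^{kT}$ is equivariantly null-homotopic.

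At this point, Lemma \ref{3.7} applies: $k\in\{2,4,8\}$, and for $k=8$ the extra hypothesis $\dim\Delta_f\le 22$ is comfortably satisfied by $\dim\Delta_f\le 14$. The conclusion is that there exists an equivariant map $\Delta_f\to S^{k-1}_\circ$. Finally, Theorem \ref{2.1} converts this into a smooth (PL) $k$-prem structure on $f$; its hypotheses $m\ge n$ (which reduces to $n\ge 2k-2$), $2(m+k)\ge 3(n+1)$ (already verified), and, in the smooth case, either the fold assumption or $3n-2m\le k$ (which reduces to $n\ge 3k-4$) are exactly our standing assumptions.

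The substantive mathematics is all packaged into Lemma \ref{3.7} and Theorem \ref{3.3}(a), both of which are given; the main task in writing the proof is therefore dimensional bookkeeping. The delicate point is that the dimension $\dim\Delta_f=2k-2$ lands precisely at the injectivity boundary in Lemma \ref{3.1}, and this is exactly what buys the one-dimensional improvement over Theorem \ref{th2}. It is also what explains why the analogous uniqueness statement fails (cf.\ Theorem \ref{th4}): Lemma \ref{3.1} only gives surjectivity at the next dimension, so an analogous attempt to upgrade Theorem \ref{3.3}(b) via Lemma \ref{3.7} is blocked from the outset.
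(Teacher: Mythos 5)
Your proof is correct and follows exactly the route the paper itself takes: Theorem \ref{3.3}(a) to get $\Theta(f)=0$ stably, Lemma \ref{3.1} applied to $\Delta_{f+}$ with $\dim\Delta_f\le 2k-2$ sitting exactly at the injectivity threshold, Lemma \ref{3.7} in place of Lemma \ref{3.2}(a) to descend to an equivariant map $\Delta_f\to S^{k-1}_\circ$, and finally Theorem \ref{2.1}. The paper condenses this into one line (``Lemma \ref{3.7} fits to replace the absolute case of Lemma \ref{3.2}(a) in the above alternative proof of Theorem \ref{th2}''), and your write-up is simply the unpacked version of that, with the dimensional bookkeeping done explicitly; your closing remark about why the uniqueness analogue breaks down is also consistent with Theorem \ref{th4}.
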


\section{Skew-co-oriented skew-framed comanifolds} \label{s4}

In this section we use geometric methods to analyze failure of injectivity
of the map $[K_+,S^{mT}]^*_{\Z/2}\to\omega^{mT}_{\Z/2}(K_+)$ from Lemma \ref{3.1}
beyond the stable range.

Let $\bar X$ be a compact polyhedron with a free PL action of
$\Z/2=\left<t\mid t^2\right>$.
Write $X=\bar X/t$, and let $\lambda$ be the line bundle associated with
the double covering $\bar X\to X$.
Given an equivariant PL map $f\:\bar X\to S^{qT+r}$, after an equivariant
homotopy we may assume that it is PL transverse to the origin $O$ of
the Euclidean space $qT+r\subset S^{qT+r}$, and so a $\Z/2$-invariant regular
neighborhood of $\bar Q:=f^{-1}(O)$ in the polyhedron $\bar X$ is
equivariantly PL homeomorphic to $Q\x I^{q+r}$ \cite{RS1},
\cite{BRS}*{\S II.4}.
A fixed orientation of $S^{qT+r}$ induces an orientation of this bundle,
and hence a {\it $\lambda^{\otimes q}$-co-orientation} of $Q:=\bar Q/t$
in $X$, that is an isomorphism
$H^{q+k}(X,X\but Q;\Z_{\lambda^{\otimes q}})\simeq\Z$,
where $\Z_\lambda$ denotes the integral local coefficient system associated
with $\lambda$ (compare \cite{BRS}*{\S IV.1}).
The action of $\Z/2$ on $S^{qT+r}$ fixes $O$ and each vector of a fixed
$r$-frame at $O$, and inverts each vector of a fixed complementary $q$-frame.
Hence $Q$ has a normal PL disc bundle $\xi$ in $X$, endowed with
a {\it $(q\lambda+r)$-framing} (as a $\lambda^{\otimes q}$-oriented bundle),
i.e.\ a $\lambda^{\otimes q}$-orientation preserving isomorphism
$\xi\simeq q\lambda|_Q\oplus r\eps$ of PL disc bundles, where $\eps$ denotes
the trivial line bundle.
This isomorphism also makes sense when $q$ and $r$ are not necessarily
nonnegative by transferring any negative terms to the left hand side of
the equation.

For brevity, we shall call a $\lambda^{\otimes q}$-co-oriented subpolyhedron
$Q\subset X$ with a $(q\lambda+r)$-framed normal PL disc bundle in $X$ a {\it
$(q\lambda+r)$-comanifold} in $X$.
A {\it $(q\lambda+r)$-cobordism} between two $(q\lambda+r)$-comanifolds
$Q_0$, $Q_1$ in $X$ is a $(q\lambda+r)$-comanifold in $X\x I$ meeting
$X\x\{i\}$ in $Q_i$ for $i=0,1$.
(For the reader who wonders what object we might call just ``comanifold'',
it is an embedded mock bundle in the sense of \cite{BRS}*{p.\ 34}, or
equivalently a subpolyhedron of $X$ that has a normal block bundle in $X$.)

The usual Pontryagin--Thom argument shows that the pointed equivariant
homotopy set $[\bar X,S^{qT+r}]_{\Z/2}$ is in pointed bijection with
the pointed set $\Emb^{q\lambda+r}(X)$ of $(q\lambda+r)$-comanifolds in
$X$ up to $(q\lambda+r)$-cobordism.
Furthermore, it follows that the equivariant stable cohomotopy group
$\omega^{qT+r}_{\Z/2}(\bar X_+)$ is isomorphic to the group
$\Imm^{q\lambda+r}(X)$ of singular $(q\lambda+r)$-comanifolds in $X$
up to singular $(q\lambda+r)$-cobordism.
Here a {\it singular $(q\lambda+r)$-comanifold} in $X$ is the projection
$Q\to X$ of a $(q\lambda+r+\infty)$-comanifold $Q$ in $X\x\R^\infty$
(compare \cite{BRS}*{\S IV.2}).

The dual bordism group $\Omega^{\sfr(k)}_n(X;\lambda)$ consists of
stably $k\lambda$-parallelized $\lambda^{\otimes k}$-oriented singular
PL $n$-manifolds $f\:N\to X$ up to stably $k\lambda$-parallelized
$\lambda^{\otimes k}$-oriented singular bordism (compare \cite{A2}).
Here a {\it $\lambda^{\otimes k}$-orientation} of $N$ is an isomorphism
$H^n(N;\Z_{f^*\lambda^{\otimes k}})\simeq\Z$.
A {\it stable $k\lambda$-parallelization} of $N$ (as
a $\lambda^{\otimes k}$-oriented manifold) is a
$\lambda^{\otimes k}$-orientation preserving isomorphism between
$f^*(k\lambda)$ and a $k$-dimensional normal bundle of $N$.

Let us summarize:

\begin{lemma}\label{4.0} Let $\bar X\to X$ be a double covering of compact
polyhedra, and let $\lambda$ be the associated line bundle.

(a) {\rm (Pontrjagin--Thom construction)}  \cite{RS1}*{3.3},
\cite{BRS}*{IV.2.4} There exist a natural pointed bijection
$[\bar X,S^{qT+r}]_{\Z/2}\leftrightarrow\Emb^{q\lambda+r}(X)$ and a natural
isomorphism $\omega^{qT+r}_{\Z/2}(\bar X_+)\simeq\Imm^{q\lambda+r}(X)$.

(b) {\rm (Poincar\'e duality)} \cite{BRS}*{II.3.2, IV.2.4} If $X$ is
a closed stably $p\lambda$-parallelizable $m$-manifold, there exists
an isomorphism $\Imm^{q\lambda}(X)\simeq\Omega^{\sfr(p+q)}_{m-q}(X;\lambda)$,
which gets natural once $X$ is endowed with
a $\lambda^{\otimes p}$-orientation and a stable $p\lambda$-parallelization.
\end{lemma}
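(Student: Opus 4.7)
The plan is to carry out both parts as $\lambda$-twisted equivariant adaptations of the PL Pontryagin--Thom construction and PL Poincar\'e--Lefschetz duality of \cite{RS1}, \cite{BRS}. The twisting is dictated by the line bundle $\lambda$, and the equivariance by the free $\Z/2$-action on $\bar X$.

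For (a), first I would define the forward map $[\bar X,S^{qT+r}]_{\Z/2}\to\Emb^{q\lambda+r}(X)$. Given an equivariant PL map $f\:\bar X\to S^{qT+r}$, equivariantly perturb it to be PL transverse to the origin $O\in qT+r\subset S^{qT+r}$ and set $Q:=f^{-1}(O)/t\subset X$. Since the $\Z/2$-action at $O$ splits the tangent space as $qT\oplus r$, the derivative of $f$ equips the equivariant normal block bundle of $f^{-1}(O)$ in $\bar X$ with an equivariant trivialization by $qT\oplus r$; quotienting by $t$ yields the asserted $(q\lambda+r)$-framing of the normal block bundle of $Q$ in $X$, while a chosen orientation of $S^{qT+r}$ produces the $\lambda^{\otimes q}$-co-orientation. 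The inverse sends a $(q\lambda+r)$-comanifold $Q\subset X$ to the collapse map $\bar X\to S^{qT+r}$ defined via the $(qT+r)$-equivariantly-trivialized thickening of $\bar Q:=Q\x_X\bar X$, sending the complement to $\infty$. Mutual invertibility on homotopy/cobordism classes is the usual transversality-over-$I$ argument. The stable isomorphism follows verbatim: an element of $\omega^{qT+r}_{\Z/2}(\bar X_+)$ is represented, for $N\gg 0$, by an equivariant map $\bar X_+\wedge S^{NT}\to S^{(q+N)T+r}$, whose corresponding $(q\lambda+r+N)$-comanifold in $X\x I^N$ is, after projection onto $X$, by definition a singular $(q\lambda+r)$-comanifold.

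For (b), the map $\Imm^{q\lambda}(X)\to\Omega^{\sfr(p+q)}_{m-q}(X;\lambda)$ sends a singular $q\lambda$-comanifold, represented by an embedding $g\:Q\emb X\x\R^N$ of a $(q\lambda+N)$-comanifold, to $(Q,\mathrm{pr}_X\circ g)$ equipped with the natural structure: the $(q\lambda+N)$-framing of the normal bundle of $Q$ in $X\x\R^N$, Whitney-summed with the pullback of the stable $p\lambda$-parallelization of $X$, produces a stable $(p+q)\lambda$-parallelization of $Q$; similarly, the $\lambda^{\otimes q}$-co-orientation of $Q$ in $X$ combined with the $\lambda^{\otimes p}$-orientation of $X$ yields a $\lambda^{\otimes(p+q)}$-orientation of $Q$. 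The inverse is the twisted Poincar\'e--Lefschetz thickening: any $\sfr(p+q)$-framed $\lambda^{\otimes(p+q)}$-oriented singular PL $(m-q)$-manifold $f\:N\to X$ PL embeds into $X\x\R^N$ for large $N$; the stable $(p+q)\lambda$-framing then cancels the pulled-back stable $p\lambda$-framing of $X$, leaving an unstable $q\lambda$-framing of the normal bundle --- i.e.\ a singular $q\lambda$-comanifold structure. This is precisely the $\lambda$-twisted version of \cite{BRS}*{II.3.2}.

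The main obstacle, which I expect to be purely bookkeeping, is ensuring sign compatibility of the $\Z_\lambda$-twisted co-orientations under Whitney-sum join and verifying that the two maps in (b) are mutually inverse on bordism classes. Both checks reduce fiberwise to the untwisted case together with the $\Z/2$-action on $qT\oplus r$, and thereby to the cited PL block-bundle Pontryagin--Thom and Poincar\'e--Lefschetz machinery. Naturality of all maps in $X$ follows at once since every construction depends only on the universal data $\bar X\to X$ and $\lambda$.
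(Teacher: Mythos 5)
Your plan is correct and coincides with the paper's own treatment: the paper likewise obtains the bijection by equivariant PL transversality to the origin of $qT+r$ (reading off the $(q\lambda+r)$-framing from the splitting of the $\Z/2$-action at $O$ into $q$ inverted and $r$ fixed directions), uses the collapse map as the inverse, passes to the stable/singular version by suspension into $X\x\R^\infty$, and reduces (b) to the cited block-bundle Poincar\'e duality of \cite{BRS}. The only cosmetic point is that in the PL category the ``derivative of $f$'' should be read as the product structure on the regular neighborhood of $f^{-1}(O)$ supplied by block-bundle transversality, which is exactly how the paper phrases it.
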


\begin{remark} Let us indicate a relation to a more traditional approach.
Let $\Imm^{q\bigstar}(X)$ be defined similarly to $\Imm^{q\lambda}(X)$,
except that $\lambda$ is not globally defined, but is a part of the data of
the $q\lambda$-comanifold.
It is well-known that this group is isomorphic to
$[S^\infty*X;\, S^\infty*(\RP^\infty/\RP^{q-1})]$, cf.\ \cite{AE}.
Here $\RP^\infty/\RP^{q-1}$ is the Thom space of the bundle $q\gamma$ over
$\RP^{\infty-q}$, so the point-inverse $Q$ of the basepoint of this space is
$q\lambda$-framed in $X$, where $\lambda$ is the pullback of $\gamma$
under the map $Q\to\RP^{\infty-q}$.
\end{remark}

The notation ``Imm'' is partially justified by

\begin{lemma}\label{4.1} Suppose that $q\ge 0$, $r\ge 0$ and $q+r\ge 1$.

(a) {\rm (Hirsch Lemma)} Every element of $\Imm^{q\lambda+r}(X)$ admits
an immersed representative, unique up to immersed $(q\lambda+r)$-cobordism.

(b) {\rm (Compression Theorem)} For large $n$, every embedded
$(q\lambda+r+n)$-comanifold $Q$ in $X\x\R^n$ is isotopic by
an arbitrarily $C^0$-small ambient isotopy to one whose last $n$ vectors
of framing are standard.
The ambient isotopy can be chosen to fix $Y\x\R^n$ where $Y$ is
a subpolyhedron of $X$ such that $Q$ is PL transverse to $Y\x\R^n$ and
the last $n$ vectors of the framing of $Q$ are already standard over
$Q\cap Y\x\R^n$.
\end{lemma}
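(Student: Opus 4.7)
My plan is to deduce part (a) from part (b) and then to prove (b) by adapting the Rourke--Sanderson Compression Theorem to the PL setting with the twisted line bundle $\lambda$.

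For (a), the key observation is that if $Q\subset X\x\R^n$ is a $(q\lambda+r+n)$-comanifold whose last $n$ framing vectors agree with the coordinate vectors $\partial/\partial x_1,\dots,\partial/\partial x_n$ of $\R^n$, then its normal PL block bundle canonically splits as $(q\lambda+r)\oplus\eps^n$, with the $\eps^n$ summand spanned by the $\R^n$-directions. Consequently the tangent space to $Q$ at each point lies in the tangential $X$-directions, so the projection $\pi\:Q\to X$ is a PL immersion and the remaining $q+r$ framing vectors exhibit it as an immersed $(q\lambda+r)$-comanifold in $X$. To produce an immersed representative of a given element of $\Imm^{q\lambda+r}(X)$, represent it by an embedded $(q\lambda+r+n)$-comanifold $Q\subset X\x\R^n$ with $n$ large, invoke (b) to align the last $n$ framing vectors with the standard basis of $\R^n$, and project to $X$. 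Uniqueness up to immersed $(q\lambda+r)$-cobordism follows from the same reasoning applied to a representing singular cobordism $W\subset X\x I\x\R^n$ (i.e.\ an embedded $(q\lambda+r+n)$-comanifold with base $X\x I$) between two immersed representatives, using the rel-$Y\x\R^n$ clause of (b) with $Y=X\x\partial I$, where the framing along $Y\x\R^n$ is already standard by assumption.

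The bulk of the work lies in (b), which I plan to prove by induction on $n$, straightening one framing vector at a time. The inductive step reduces to the single-vector Compression Theorem: given a $(q\lambda+r+1)$-comanifold $Q\subset X\x\R$ with last framing vector $v$ a nowhere-zero section of the normal sphere bundle, produce an arbitrarily $C^0$-small ambient isotopy of $X\x\R$ taking $Q$ to a comanifold on which $v=\partial/\partial x$. The hypothesis that $n$ be large is what guarantees that each intermediate step retains enough codimension to run the single-vector compression. Following Rourke and Sanderson, I would carry out the single-vector case in two stages: first, by a small vertical ambient isotopy (lifting portions of $Q$ in the positive $x$-direction), ensure that the $x$-component of $v$ is everywhere positive on $Q$, which relies on a parametric general position argument together with the codimension $q+r\ge 1$ of the ``bad'' locus where $v=-\partial/\partial x$; second, straighten $v$ to $\partial/\partial x$ by an Alexander-trick isotopy supported in a thin neighbourhood of $Q$. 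Both stages are performed rel a regular neighbourhood of $Y\x\R$ where the framing is already standard, by running the inductive constructions only outside such a neighbourhood.

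The main obstacle is the first stage of the single-vector compression: the vertical lifting must be carried out globally and coherently using PL block-bundle techniques from \cite{BRS} in place of the smooth vector-field integration used by Rourke and Sanderson. Once this geometric core is in place, the $\lambda^{\otimes q}$-co-orientation and $(q\lambda+r)$-framing are preserved automatically by any ambient isotopy, so no additional bookkeeping for the twisted structure is required; the hypothesis $q+r\ge 1$ is precisely the positive codimension required to run the compression induction.
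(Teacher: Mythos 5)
Your treatment of part (a) is sound and essentially matches the paper's intent: once the last $n$ framing vectors are standard, the normal block bundle of $Q$ contains the full $\R^n$ fibre direction, so the projection $Q\to X$ is a PL immersion carrying the residual $(q\lambda+r)$-framing, and the uniqueness clause follows by applying the relative case of (b) to a cobordism over $X\x I$ with $Y=X\x\partial I$. The paper compresses all of this into "Clearly (a) follows from (b)," and your unpacking of that sentence is accurate.

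Part (b), however, is where you and the paper diverge, and your route has a real gap. You propose to re-prove the Compression Theorem directly in the PL category, inducting on framing vectors and "replacing the smooth vector-field integration used by Rourke and Sanderson with PL block-bundle techniques." You yourself flag the vertical-lifting step as "the main obstacle" — but you never close it. This is not a cosmetic issue: the Rourke--Sanderson proof genuinely runs on flows of smooth vector fields, and there is no known straightforward PL surrogate for that argument. A direct PL Compression Theorem at the same level of generality is essentially as hard as the whole of PL immersion theory (Haefliger--Po\'enaru); it cannot be dispatched by an Alexander trick plus general position. So as written, your proof of (b) is missing its central step, not merely leaving routine details to the reader.

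The paper sidesteps the difficulty entirely, and this is precisely where the hypothesis "$n$ large" earns its keep. Rather than staying PL, the paper triangulates $X$, uses PL transversality \cite{BRS}*{II.4.4} to make $Q$ meet each product $\Delta^i\x\R^n$ in a PL manifold-with-boundary contained in a small PL ball, and then observes that because $n$ is large these pieces can be \emph{smoothed} (with corners, compatibly over faces) inside the standard smooth structure on $\Delta^i\x\R^n$. After that, one simply invokes the \emph{smooth} Compression Theorem of Rourke--Sanderson \cite{RS2} simplex by simplex. In other words, the paper reduces PL compression to smooth compression by local smoothing in high codimension, rather than trying to prove a PL compression theorem from scratch. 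If you want to salvage your approach, you should either adopt this reduction, or else cite PL immersion theory (Haefliger--Po\'enaru) outright in place of your inductive straightening argument; as it stands, the inductive step you describe is not substantiated.
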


More specifically, ``large $n$'' means $q+r+n\ge\dim X+1$.
Using \cite{Hae}, this can be weakened to $q+r+n\ge\frac12(\dim X+3)$.

If the ambient isotopy is replaced by regular homotopy, (b) holds in
codimension one \cite{Hi}.

\begin{proof} Clearly (a) follows from (b).

By PL transversality \cite{BRS}*{II.4.4}, given a triangulation of $X$, any
singular $(q\lambda+r)$-comanifold $f\:Q\to X$ can be altered by
an arbitrarily $C^0$-small homotopy so that every simplex $\Delta^i$ of this
triangulation meets $Q$ in a singular $(q\lambda+r)$-comanifold
$f|_{f^{-1}(\Delta^i)}\:f^{-1}(\Delta^i)\to\Delta^i$.
If $f$ is the projection of $Q\subset X\x\R^n$, this homotopy can be chosen to fix
$f^{-1}(Y)$ and lift to an arbitrarily $C^0$-small ambient isotopy of $X\x I$
fixing $Y\x I$.
By \cite{BRS}*{Lemma II.1.2}, a singular $(q\lambda+r)$-comanifold in
the PL manifold $\Delta^i$ is a singular PL $(i-q-r)$-manifold with boundary
$\partial f^{-1}(\Delta^i)=f^{-1}(\partial\Delta^i)$.
Picking a sufficiently fine triangulation of $X$, we may assume that each
such PL manifold $Q\cap (\Delta^i,\partial\Delta^i)\x\R^n$ is contained
in a $(i-q-r)$-PL ball contained in $Q$.
Since $n$ is large, it follows that each such intersection is smoothable
(with corners) as a submanifold in the standard smooth structure on
$\Delta^i\x\R^n$, so that the restriction of the smoothing to the boundary
agrees with those constructed earlier over the faces of $\Delta^i$.
Part (b) now follows by induction on the simplices of $X$ from its smooth
case, which was proved in \cite{RS2}. 
\end{proof}

\begin{addendum}[Dense $h$-principle for PL immersions]
Let $P$ be an $n$-polyhedron, $f\:P\to\R^{2n}$ a PL map, and $g\:P\imm\R^{2n}$
a PL immersion.
Then $f$ is arbitrarily $C^0$-close to a PL immersion, PL regularly homotopic
to $g$.
\end{addendum}

\begin{proof} Let $h\:P\x I\to\R^{2n}\x I$ be a generic homotopy between $g$
and $f$.
By general position, it fails to be an immersion only at finitely many points
$(p,t)$.
Let $L$ be the link of $p$ in $P$.
The restriction of $h$ to $(p*L)\x\{0\}\cup L\x I$ is a regular homotopy
$(p*L)\x\{0\}\cup L\x I\xr{h_p}B^{2n}\cup S^{2n-1}\x I\xr{g_p}\R^{2n}\x I$.
We redefine $h$ on $(p*L)\x I$ by $h(x,s)=g_p((\pi h_p\phi(x,s),s))$, where
$\phi\:(p*L)\x I\to (p*L)\x\{0\}\cup L\x I$ is a collapse and
$\pi\:B^{2n}\x I\to B^{2n}$ is the projection.
Doing this for every cusp $(p,t)$ converts $h$ to a regular homotopy,
which is sufficiently close to $h$ as long as each $(p*L)$ is chosen to be
sufficiently small. 
\end{proof}

Let us recall the geometric definition of the cup product in $\Imm^*(X)$.
Let $\phi\:Q\to X$ be a singular $(q\lambda+r)$-comanifold representing
some $\Phi\in\Imm^{q\lambda+r}(X)$.
The {\it transfer}
$\phi_!\:\Imm^{q'\lambda+r'}(Q)\to\Imm^{(q+q')\lambda+(r+r')}(X)$
of the normal bundle of $Q$ sends a representative $\psi\:M\to Q$ to
the class of $\phi\psi\:M\to Q\to X$, endowed with the skew framing
obtained by combining those of $\phi$ and $\psi$, and the
$\lambda$-co-orientation obtained by combining those of $\phi$ and $\psi$.
For any $\Phi'\in\Imm^{q'\lambda+r'}(X)$, the cup product
$\Phi\Cup\Phi'$, defined originally in terms of the cross product, equals
$\phi_!\phi^*(\Phi')$ \cite{BRS}.

Let $\xi$ be a $q\lambda$-framed PL disc bundle over $X$.
By PL transversality \cite{BRS}, the zero set $X'\cap X$ of a generic
cross-section $X'$ (that is the image of $X$ under a fiber preserving
self-homeomorphism of the total space) is a $q\lambda$-comanifold in $X$.
We denote its class in the set $\Emb^{q\lambda}(X)$ by $\E(\xi)$ and its
image in the group $\Imm^{q\lambda}(X)$ by $E(\xi)$.
The latter can also be defined as $i^*i_!([\id_X])$, where $i\:X\emb\xi$ is
the inclusion of the zero cross-section into the total space, and
$[\id_X]\in\Imm^0(X)$ is the fundamental class.
The Hurewicz homomorphism (cf.\ \cite{M3}) obviously sends $E(\xi)$ to
the usual (twisted) Euler class $e(\xi)\in H^q(X;\Z_{\lambda^{\otimes q}})$,
where $\Z_\lambda$ denotes the integral local coefficient system associated
with $\lambda$.

\begin{lemma}\label{4.2} Let $\bar X\to X$ be a double covering between compact
polyhedra, and let $\lambda$ be the associated line bundle.
The Pontrjagin--Thom correspondence \ref{4.0}(a) sends

$\E(q\lambda)$ to the equivariant homotopy class of the composition
$\bar X\xr{\phi_{\bar X}}S^\infty_\circ\xr{\rho^\infty_q}S^{qT}$;

$E(q\lambda)$ to
$[\rho^\infty_q\phi_{\bar X}]\in\omega^{qT}_{\Z/2}(\bar X_+)$.
\end{lemma}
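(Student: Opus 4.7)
The plan is to unwind the Pontryagin--Thom correspondence \ref{4.0}(a) applied to the map $\rho^\infty_q\phi_{\bar X}\:\bar X\to S^{qT}$, and identify the resulting $q\lambda$-comanifold as the geometric zero locus of a section of $q\lambda$, i.e.\ a representative of $\E(q\lambda)$.

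First I would identify the preimage of $0\in qT\subset S^{qT}$ under $\rho^\infty_q$. By the construction of $\rho^\infty_q$, the open set $S^\infty_\circ\but S^{q-1}_\circ$ is $\Z/2$-homeomorphic to $S^{\infty-q}_\circ\x qT$, and the restriction of $\rho^\infty_q$ to this open set is the projection onto the $qT$-factor. Hence $(\rho^\infty_q)^{-1}(0)=S^{\infty-q}_\circ$, sitting inside $S^\infty_\circ$ as the complementary subsphere to $S^{q-1}_\circ$ and carrying the tautological $qT$-framing of its normal bundle. After an equivariant homotopy making $\phi_{\bar X}$ PL transverse to $S^{\infty-q}_\circ$, the preimage $\bar Q:=\phi_{\bar X}^{-1}(S^{\infty-q}_\circ)$ is a codimension-$q$ subpolyhedron of $\bar X$ with an induced $qT$-framed normal bundle; quotienting by $\Z/2$ yields a $q\lambda$-comanifold $Q\subset X$. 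By the definition of \ref{4.0}(a), $[Q]\in\Emb^{q\lambda}(X)$ is the PT class of $[\rho^\infty_q\phi_{\bar X}]$.

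The second key step is to identify $[Q]=\E(q\lambda)$. Since the target $S^\infty_\circ$ is equivariantly contractible for free $\Z/2$-actions up to dimension shift, $\phi_{\bar X}$ is (up to equivariant homotopy) a classifying map of the double cover $\bar X\to X$; its quotient $f\:X\to\RP^\infty$ classifies $\lambda$, so $q\lambda\cong f^*(q\gamma)$. The inclusion $\RP^{\infty-q}\subset\RP^\infty$ is precisely the zero locus of the standard section $s\:\RP^\infty\to q\gamma$ given by the first $q$ coordinates, with canonical $q\gamma|_{\RP^{\infty-q}}$-framing of the normal bundle. Thus $Q=f^{-1}(\RP^{\infty-q})$ is the zero locus of the pulled-back section $f^*s$ of $q\lambda$ with the pulled-back framing, showing $[Q]=\E(q\lambda)$ in $\Emb^{q\lambda}(X)$. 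The second assertion then follows from the first by naturality of \ref{4.0}(a) under the forgetful maps $\Emb^{q\lambda}(X)\to\Imm^{q\lambda}(X)$ and $[\bar X_+,S^{qT}]^*_{\Z/2}\to\omega^{qT}_{\Z/2}(\bar X_+)$ of Lemma \ref{3.1}.

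The main subtlety, and the only part that requires genuine care rather than unwinding definitions, is the compatibility of framings: one must check that the $qT$-framing of $S^{\infty-q}_\circ$ arising from the decomposition $S^\infty_\circ\but S^{q-1}_\circ\cong S^{\infty-q}_\circ\x qT$ agrees, after pullback by $\phi_{\bar X}$ and descent to $X$, with the framing of $\RP^{\infty-q}\subset\RP^\infty$ by $q\gamma|_{\RP^{\infty-q}}$ (the normal bundle of the zero section of $q\gamma$ restricted over $\RP^{\infty-q}$). Both framings arise from the same identification of the first $q$ coordinate directions of $\R^\infty$ with the fibers of $q\gamma$, so the check reduces to tracing through the definitions of the $\lambda^{\otimes q}$-co-orientation and the $q\lambda$-framing inherited from each side.
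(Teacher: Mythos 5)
Your proposal is correct and follows essentially the same route as the paper: both arguments reduce to the universal case via the classifying map $\phi_X\:X\to\RP^\infty$ (the paper phrases this as $\E(q\lambda)=\phi_X^*\E(q\gamma)$, you phrase it as pulling back the standard section of $q\gamma$ vanishing on $\RP^{\infty-q}$), and both rest on the observations that $(\rho^\infty_q)^{-1}(0)=S^{\infty-q}_\circ$ and that $\RP^{\infty-q}$ with its canonical framing represents $\E(q\gamma)$. Your closing remark about checking the compatibility of framings is a detail the paper leaves implicit, but it is the same argument.
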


\begin{proof}
$E(q\lambda)=E(\lambda)^q=\phi_X^*E(\gamma)^q=\phi_X^*E(q\gamma)$, where
$\phi_X\:X\to\RP^\infty$ classifies $\lambda$ and $\gamma$ is the universal
line bundle.
By similar considerations, $\E(q\lambda)=\phi_X^*\E(q\gamma)$ as well.
By a direct geometric construction, $\E(q\gamma)$ is represented by
$\RP^{\infty-q}$ with the canonical $q\lambda$-framing.
On the other hand, $(\rho^\infty_q)^{-1}(0)=S^{\infty-q}_\circ$. 
\end{proof}

\begin{remark} By Lemma \ref{4.2} the Pontrjagin--Thom isomorphism \ref{4.0}(a) sends
the obstruction $\Theta(f)\in\omega^{kT}_{\Z/2}({\Delta_f}_+)$ from
Theorem \ref{3.3} to $E(k\lambda)\in\Imm^{k\lambda}(\Delta_f/t)$, where $\lambda$
is the line bundle associated with the double covering
$\Delta_f\to\Delta_f/t$.
On the other hand, the Poincar\'e duality \ref{4.0}(b) obviously sends
$E(k\lambda)$ to the obstruction
$O(f)\in\Omega^{\sfr(n-d)}_d(\Delta_f/t;\lambda)$, defined in \cite{A2} in
the case of a map $f\:S^n\to\R^{2n-d}$.
\end{remark} 

Let $\phi\:Q\imm X$ be an immersed $q\lambda$-comanifold representing
some $\Phi\in\Imm^{q\lambda}(X)$.
By PL transversality, the double point immersion $d\phi\:\Delta_\phi\imm Q$
is an immersed $q\lambda$-comanifold in $Q$, hence the composition
$\Delta_\phi\xr{d\phi}Q\xr{\phi}X$ is an immersed $2q\lambda$-comanifold
in $X$.
Let $\Delta_\Phi$ denote its class in $\Imm^{2q\lambda}(X)$.

\begin{lemma}[Herbert's formula] \label{4.3}
$\Delta_\Phi+\phi_!E(\nu_\phi)=\Phi^2$.
\end{lemma}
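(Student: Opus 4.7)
The plan is to compute $\Phi^2 = \Phi \Cup \Phi$ geometrically via the formula $\Phi \Cup \Phi' = \phi_!\phi^*(\Phi')$ recalled just before the statement, applied to $\Phi'=\Phi$ represented by the same immersion $\phi$. For this I would fix a tubular neighborhood of $\phi$, identified with (a disc subbundle of) the normal bundle $\nu_\phi$, and pick a generic section $s\:Q\to\nu_\phi$. The exponentiated map $\phi_s\:Q\imm X$ is regularly homotopic to $\phi$, lies in general position with respect to $\phi$, and represents the same class $\Phi$. Then $\phi^*(\Phi)$ is represented by the immersed $q\lambda$-comanifold $\phi^{-1}(\phi_s(Q))\subset Q$.

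Next I would decompose this preimage. Since $s$ is small and $\phi$ is self-transverse, the points of $\phi^{-1}(\phi_s(Q))$ are of exactly two types. Type (i): points $x\in Q$ with $\phi(x)=\phi_s(x)$, i.e.\ zeros of $s$, forming a copy of the zero locus representing $E(\nu_\phi)\in\Imm^{q\lambda}(Q)$. Type (ii): pairs $(x,y)\in Q\x Q$ with $x\ne y$ and $\phi(x)=\phi_s(y)$; these are in canonical bijection with the $\Z/2$-set of double points $\Delta_\phi\imm Q$ (the first-coordinate projection), since a pair is obtained from a double point of $\phi$ by keeping one preimage fixed and pushing the other along $s$. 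Each of these two loci is a $q\lambda$-comanifold in $Q$, and they are disjoint in $\phi^{-1}(\phi_s(Q))$ because $s$ vanishes only on a codimension $q$ subset while double points avoid $s^{-1}(0)$ generically.

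Applying $\phi_!$, the type (i) contribution becomes $\phi_!E(\nu_\phi)\in\Imm^{2q\lambda}(X)$ by definition, and the type (ii) contribution is the class of the composition $\Delta_\phi\xr{d\phi}Q\xr{\phi}X$, which is $\Delta_\Phi$. Summing, $\Phi^2=\Delta_\Phi+\phi_!E(\nu_\phi)$ as desired.

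The main technical point — and the one I would be careful about — is checking that the $\lambda^{\otimes 2q}$-co-orientations and the $2q\lambda$-framings produced geometrically on each of the two loci agree with those demanded by the right-hand side. For (i) this is built into the definition of $E(\nu_\phi)$ followed by the transfer $\phi_!$. For (ii) one verifies, at a transverse double point $p=\phi(x)=\phi(y)$, that the normal disc of $\Delta_\phi$ in $Q$ at $x$ (which carries a $q\lambda$-framing inherited from the $q\lambda$-framing of the $y$-sheet via transversality) combined with the $q\lambda$-framing of $\phi$ at $x$ yields the same framed $2q\lambda$-normal disc in $X$ at $p$ as the one obtained by intersecting $\phi(Q)$ with $\phi_s(Q)$ — and similarly for co-orientations, using that the local system $\lambda$ of the $x$-sheet and the $y$-sheet at $p$ are identified through $\phi$. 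Once this local compatibility is nailed down, the geometric identity above literally reads as Herbert's formula in $\Imm^{2q\lambda}(X)$.
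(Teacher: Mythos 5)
Your proof is correct and rests on the same decomposition as the paper's: starting from $\Phi^2=\phi_!\phi^*(\Phi)$, the pullback $\phi^*(\Phi)$ splits into the zero locus of a generic normal section, contributing $E(\nu_\phi)$, plus a locus identified with the double points, contributing $\Delta_\Phi$ after applying $\phi_!$. The difference is one of packaging. The paper never perturbs $\phi$ explicitly; it factors $\phi=\bar\phi\circ i$ through the total space of $\nu_\phi$ and invokes the formal identity $\bar\phi^*\bar\phi_!=1+(d\bar\phi)_!T(d\bar\phi)^*$ for the codimension zero immersion $\bar\phi$, then reads off $i^*i_!([\id_Q])=E(\nu_\phi)$ by definition and unwinds the second term through the splitting $\Delta_{\bar\phi}\simeq\nu_{d\phi}\oplus\nu_{d\phi}$. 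Your explicit push-off along a section is precisely the geometric content behind that identity (the involution $T$ records which of the two sheets at a double point is being pushed off, matching your choice of first-coordinate projection). What the paper's route buys is that the co-orientation and framing bookkeeping you rightly single out as the main technical point is largely absorbed into the composition rules for transfers and cup products; what your route buys is that the two contributions appear as visibly disjoint subpolyhedra of $Q$, at the cost of checking the local framing compatibility at each double point by hand. Either way that verification must happen somewhere --- the paper itself concedes it is ``slightly sloppy about notation'' in this proof --- so your proposal is a legitimate alternative, marginally more labor-intensive at the local level but otherwise equivalent.
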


Compare \cite{LS}, \cite{He}, \cite{EG}.
We will be slightly sloppy about notation in this proof, since spelling out
all the conventions would only make it less readable it seems.

\begin{proof} By the geometric definition of the cup product,
$\Phi\Cup\Phi=\phi_!\phi^*(\Phi)$.
We can think of $\Phi$ as $\phi_!([\id_Q])$.
Now $\phi$ factors into the composition $Q\xr{i}\nu_\phi\xr{\bar\phi}X$, so
$\phi^*\phi_!=i^*\bar\phi^*\bar\phi_!i_!$, where the domain of $\phi_!$ and
the range of $i_!$ is $\Imm^{q\lambda}(\nu_\phi,\partial\nu_\phi)$.
Since $\bar\phi$ is a codimension zero immersion,
$\bar\phi^*\bar\phi_!=1+(d\bar\phi)_!T(d\bar\phi)^*$, where
$$T=t^*=t_!\:
\Imm^{q\lambda}(\Delta_{\bar\phi},\Delta_{\bar\phi}\cap\partial\nu_\phi)\to
\Imm^{q\lambda}(\Delta_{\bar\phi},
\Cl{\partial\Delta_{\bar\phi}\but\partial\nu_\phi})$$
is induced by the involution $t\:\Delta_{\bar\phi}\to\Delta_{\bar\phi}$.
Now $i^*i_!([\id_Q])$ is by definition $E(\nu_\phi)$, so
$$\phi^*\phi_!([\id_Q])=
E(\nu_\phi)+i^*(d\bar\phi)_!T(d\bar\phi)^*i_!([\id_Q]).$$

We have $\Delta_{\bar\phi}\simeq\nu_{d\phi}\oplus\nu_{d\phi}$ as PL disc
bundles over $\Delta_\phi$.
Let $j\:\nu_{d\phi}\emb\Delta_{\bar\phi}$ be the inclusion onto one of
the factors, and let $\widebar{d\phi}\:\nu_{d\phi}\imm Q$ be the
extension of $d\phi$.
Then $(d\bar\phi)^*i_!=j_!\widebar{d\phi}^*$ and
$i^*(d\bar\phi)_!=\widebar{d\phi}_!j^*$.
Hence $i^*(d\bar\phi)_!T(d\bar\phi)^*i_!=
\widebar{d\phi}_!j^*Tj_!\widebar{d\phi}$.
Finally, $\phi\widebar{d\phi}=\widebar{d\bar\phi}j$, so we obtain
$$\phi_!\widebar{d\phi}_!j^*Tj_!\widebar{d\phi}([\id_Q])=
\widebar{d\bar\phi}_!j_!j^*([tj])=\widebar{d\bar\phi}_!([j]\Cup[tj])=
\Delta_\Phi,$$
where
$$\smallsmile\:
\Imm^{q\lambda}(\Delta_{\bar\phi},\Delta_{\bar\phi}\cap\partial\nu_\phi)
\otimes
\Imm^{q\lambda}(\Delta_{\bar\phi},
\Cl{\partial\Delta_{\bar\phi}\but\partial\nu_\phi})
\to\Imm^{2q\lambda}(\Delta_{\bar\phi},\partial\Delta_{\bar\phi}).$$
\end{proof}

\subsection{Secondary obstruction}
Let again $\lambda$ be the line bundle associated to a double covering
$\bar X\to X$.
Assuming that $E(q\lambda)=0$, we will construct a secondary obstruction to
the vanishing of $\E(q\lambda)$.
Let $Q\emb X$ be an embedded $q\lambda$-comanifold representing
$\E(q\lambda)\in\Emb^{q\lambda}(X)$.
Let $\phi\:R\imm X\x I$ be a generic immersed $q\lambda$-null-cobordism of
$Q$ given by the hypothesis $E(q\lambda)=0$.
The immersed $2q\lambda$-manifold $\Delta_\phi\imm R\imm X\x I$
represents an element of $\Imm^{2q\lambda}(X\x I,X\x\partial I)$.
The Thom isomorphism sends it to an element
$$F(q\lambda)\in\Imm^{2q\lambda-1}(X)\simeq\omega^{2qT-1}_{\Z/2}(\bar X_+).$$

\begin{proposition}\label{4.4} $F(q\lambda)$ is well-defined if $E(q\lambda)=0$,
and vanishes if $\E(q\lambda)=0$.
\end{proposition}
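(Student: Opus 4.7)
The plan is to verify vanishing first, then well-definedness in two stages: independence of the choice of null-cobordism $R$, then of the embedded representative $Q$. For vanishing, if $\E(q\lambda)=0$ then $Q$ bounds an embedded $q\lambda$-comanifold $R\emb X\x I$; using this embedded $R$ in the definition gives $\Delta_R=\emptyset$, so $F(q\lambda)=0$.

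For independence of the choice of null-cobordism, let $R_0, R_1$ be two immersed $q\lambda$-null-cobordisms of a common embedded representative $Q$. Place $R_0$ in $X\x[0,1]$ and a suitable reflection of $R_1$ in $X\x[-1,0]$ so that the $q\lambda$-framings and $\lambda^{\otimes q}$-co-orientations match smoothly along $Q\subset X\x\{0\}$. The union $\Psi:=R_0\cup R_1$ is a closed immersed $q\lambda$-comanifold in $X\x[-1,1]$, supported strictly inside $X\x(-1,1)$. Under the retraction isomorphism $\Imm^{q\lambda}(X\x[-1,1])\simeq\Imm^{q\lambda}(X)$, the class $[\Psi]$ corresponds to the empty slice $\Psi\cap X\x\{1\}$, so $[\Psi]=0$. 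Hence $\Psi$ bounds an immersed $q\lambda$-comanifold $W\imm X\x[-1,1]\x I$; since $\Psi$ is compactly supported inside $X\x(-1,1)$, we can arrange $W$ to avoid $X\x\{\pm 1\}\x I$. By functoriality of double-point loci under immersed cobordism, $\Delta_W$ is an immersed $2q\lambda$-null-cobordism of $\Delta_\Psi$ in $X\x[-1,1]\x I$ missing $X\x\{\pm 1\}\x I$. Because $R_0$ and $R_1$ meet only along the embedded set $Q$ (a smooth joining, not a transverse crossing), $\Delta_\Psi=\Delta_{R_0}\sqcup\Delta_{R_1}$. Under the suspension isomorphism $\Imm^{2q\lambda}(X\x[-1,1],X\x\{\pm 1\})\simeq\Imm^{2q\lambda-1}(X)$, combined with the co-H-structure on $\Sigma X_+$ coming from the pinch at $X\x\{0\}$, $[\Delta_\Psi]$ is identified with $F(R_0)-F(R_1)$; the minus sign appears because the reflection reverses the suspension coordinate on the $[-1,0]$ side. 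Thus $F(R_0)=F(R_1)$.

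For independence of the choice of embedded representative, let $Q_0, Q_1$ represent $\E(q\lambda)$ and $C\emb X\x I$ be an embedded $q\lambda$-cobordism between them. Given a null-cobordism $R_0$ of $Q_0$ in $X\x[-1,0]$, stacking $C$ in $X\x[0,1]$ produces a null-cobordism $R_0\cup C$ of $Q_1$ in $X\x[-1,1]$. Since $C$ is embedded and $R_0\cap C=Q_0$ is a smooth joining, $\Delta_{R_0\cup C}=\Delta_{R_0}$, whence $F(R_0\cup C)=F(R_0)$ after reparametrization. Combining with the previous paragraph (applied to any chosen null-cobordism of $Q_1$) completes the argument.

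The chief obstacle is the orientation bookkeeping identifying $[\Delta_\Psi]$ with $F(R_0)-F(R_1)$; this relies on the standard sign convention for the co-H-structure on a suspension, under which the pinch map represents addition. Notably, Herbert's formula (Lemma \ref{4.3}), established just prior, is not needed for this proof: everything reduces to functoriality of double-point loci under immersed cobordism.
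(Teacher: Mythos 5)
The overall architecture (reduce to a closed immersed comanifold $\Psi=R_0\cup(-R_1)$ in $X\times[-1,1]$ whose double-point class computes $F(R_0)-F(R_1)$) is the same as the paper's, and your vanishing argument and the reduction of $Q$-independence to $R$-independence are fine. But there is a genuine gap at the central step: ``since $\Psi$ is compactly supported inside $X\times(-1,1)$, we can arrange $W$ to avoid $X\times\{\pm1\}\times I$.'' This conflates the absolute group $\Imm^{q\lambda}(X\times[-1,1])$ with the relative group $\Imm^{q\lambda}(X\times[-1,1],\,X\times\{\pm1\})$. A null-cobordism witnessing $[\Psi]=0$ in the absolute theory is a proper (mock-bundle) cobordism in $X\times[-1,1]\times I$ which is allowed to have extra boundary over the ends $X\times\{\pm1\}\times I$ --- think of a framed point in the interior of $[-1,1]$: it is null-cobordant in $\Imm^1([-1,1])\simeq\Imm^1(pt)=0$ only via an arc escaping through an end, never via one avoiding the ends. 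The obstruction to pushing $W$ off the ends is precisely the \emph{relative} class of $\Psi$, which under the suspension isomorphism is the difference class in $\Imm^{q\lambda-1}(X)$ of the two null-cobordisms $R_0,R_1$ --- generically nonzero (it lies in the image of the connecting map $\delta$, not in $0$). So your argument only yields $[\Delta_\Psi]=0$ in the absolute group, whereas $F(R_0)-F(R_1)$ is the class of $\Delta_\Psi$ in the relative group $\Imm^{2q\lambda}(X\times[-1,1],X\times\{\pm1\})\simeq\Imm^{2q\lambda-1}(X)$, and the forgetful map from relative to absolute has a kernel.

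Your closing remark that Herbert's formula is not needed is exactly where the proof goes astray. The paper computes the relative class of $\Delta_\Psi$ via Herbert's formula as $[\Psi]^2-\psi_!E(\nu_\psi)$: the square vanishes for soft reasons (push $\Psi$ into a collar of one end), but $\psi_!E(\nu_\psi)=\psi_!\psi^*E(q\lambda)=[\Psi]\smallsmile E(q\lambda)$ does \emph{not} vanish automatically --- it vanishes only because of the hypothesis $E(q\lambda)=0$, which your argument never invokes except for the existence of $R$. A correct proof must use this hypothesis (or an equivalent of Herbert's formula) at exactly this point; ``functoriality of double-point loci under immersed cobordism'' in the absolute theory cannot see the relative term you need to kill.
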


\begin{proof} The second assertion holds by construction.
To prove the first, let us temporarily denote $F(q\lambda)$ by $F(R)$, and
let us consider another immersed $q\lambda$-null-cobordism $\phi'\:R'\imm X\x I$
of $Q$.
Let $\tau\:I=[0,1]\to [-1,0]$ be defined by $x\mapsto -x$, and let
$\phi''=(\id_X\x\tau)\phi'\:R'\imm X\x[-1,0]$.
Let $\psi\:W\imm X\x [-1,1]$ be the immersed $q\lambda$-comanifold
$\phi\cup(-\phi'')$.
Then $F(W)=F(R)-F(R')$.
We may assume that $W$ is contained in $X\x [-1,1-\eps]$ for some $\eps>0$.
Since $W$ is regularly homotopic into $X\x [1-\eps,1]$ by an ambient isotopy,
$[\psi]^2=0$ and so $F(W)+\psi_!E(\nu_\psi)=0$ by the Herbert formula \ref{4.3}.
Now $W$ is $q\lambda$-framed, so $\nu_\psi$ is isomorphic to
$q(\psi^*\lambda)=\psi^*(q\lambda)$.
Hence $E(\nu_\psi)=\psi^*E(q\lambda)=0$ by the hypothesis.
Thus $F(W)=0$. 
\end{proof}

\begin{example}\label{4.5} Let $M$ be a closed $(2k-1)$-manifold and $\lambda$
a line bundle over $M$ with $E(k\lambda)=0$.
Then $F(k\lambda)=0$.
\end{example}

A stronger result will be obtained in Theorem \ref{4.7} by a different method.

\begin{proof} $F(k\lambda)$ lies in $\Imm^{2k\lambda-1}(M)\simeq H^{2k-1}(M)$.
On the other hand, if $F(k\lambda)$ is represented by $\Delta_\phi\imm M\x I$
for some generic immersed $k\lambda$-null-cobordism $\phi\:R\imm M\x I$ of
a representative of $\E(k\lambda)$, the double covering
$p\:\Delta_\phi\to\Delta_\phi/t$ is trivial, so
$F(k\lambda)=2[\Delta_\phi/t]$ if $t$ preserves the orientation of the bundle
$p^*(2k\lambda)$, and $0$ otherwise.
If $k$ is odd, the orientation is reversed.
If $M$ is non-orientable, $H^{2k-1}(M)\simeq\Z/2$ and again $F(k\lambda)=0$.

If $k$ is even and $M$ is orientable, $R$ is also orientable, and
$F(k\lambda)$ is twice the algebraic number of double points of $\phi$.
Let $\bar M$ be the double cover of $M$ corresponding to $\lambda$ and
$\bar R\imm\bar M\x I$ the double cover over $\phi$.
Since the coverings $\bar R\to R$ and $\bar M\to M$ preserve orientations,
$2F(k\lambda)$ is twice the algebraic number of double points of $\bar R$.
Since $\bar M\to S^{\infty}$ is non-equivariantly null-homotopic,
by the non-equivariant version of Lemma \ref{4.2}, $\partial\bar R$ admits
an embedded framed null-cobordism $\bar R'$.
By the non-equivariant version of the proof of Proposition \ref{4.4} (which is
easier since all normal bundles are trivial), $\bar R$ has as many double
points as $\bar R'$, that is algebraically zero.
So again $F(k\lambda)=0$. 
\end{proof}

Using Lemma \ref{4.4}, we will now prove that $F(k\lambda)$ vanishes identically
whenever it is defined.

\begin{lemma}\label{4.6} Let $\lambda$ be a line bundle over a polyhedron $X$
such that $E(k\lambda)=0$.
There exists a singular $0$-comanifold $f\:X'\to X$ such that
$[f]=[\id_X]\in\Imm^0(X)$ and the double cover $\bar X'$ corresponding to
$f^*\lambda$ admits an equivariant map to $S^{k-1}_\circ$.
\end{lemma}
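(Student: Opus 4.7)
The plan is to build $X'$ by PL surgery on $X$ along a representative of $\E(k\lambda)$, using the immersed null-cobordism furnished by the vanishing of $E(k\lambda)$ as the trace of the surgery.

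Choose a generic section $s\:X\to k\lambda$ with zero set $Q$, a $k\lambda$-comanifold representing $\E(k\lambda)$, and let $N(Q)=D(k\lambda|_Q)\subset X$ be a tubular neighborhood. By the Hirsch Lemma \ref{4.1}(a), $E(k\lambda)=0$ produces an immersed $k\lambda$-null-cobordism $\psi\:W\imm X\x I$ with $\partial W=Q\subset X\x\{0\}$ and $\nu_W\cong k\lambda|_W$ via the framing. By the Compression Theorem \ref{4.1}(b), I lift $\psi$ to an embedding $\tl W\emb X\x I\x\R^N$ for $N$ large, so that $\nu_{\tl W}\cong k\lambda|_{\tl W}\oplus N\eps$ with the $N\eps$-summand in the $\R^N$-directions (standard near $\partial\tl W$). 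Define the polyhedron
\[
X':=(X\but\Int N(Q))\cup_{\partial N(Q)}S(k\lambda|_{\tl W}),
\]
gluing along the identification $\partial N(Q)=S(k\lambda|_Q)=S(k\lambda|_{\tl W})|_{\partial\tl W}$ given by the framings, with $f\:X'\to X$ the inclusion on the first piece and the composition $S(k\lambda|_{\tl W})\to\tl W\to X\x I\x\R^N\to X$ on the second.

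The equivariant map $\bar X'\to S^{k-1}_\circ$ will be the one corresponding to the nowhere-zero section $\sigma$ of $f^*(k\lambda)$ given by $\sigma=s$ on $X\but\Int N(Q)$ and the tautological section $(w,v)\mapsto v$ on $S(k\lambda|_{\tl W})$ (using $k\lambda|_{\tl W}\cong f^*(k\lambda)|_{\tl W}$); these agree on the gluing boundary up to positive scalar by the standard local normal form of $s$ near its zero set. To verify $[f]=[\id_X]\in\Imm^0(X)$, I take the surgery-trace cobordism
\[
Y:=(X\x I\x\{0\})\cup D(k\lambda|_{\tl W})\subset X\x I\x\R^N,
\]
glued along $D(k\lambda|_{\partial\tl W})=N(Q)\subset X\x\{0\}\x\{0\}$. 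The normal bundle of $Y$ in $X\x I\x\R^\infty$ is trivial of rank $\infty$ (from $\R^\infty$ on the $X\x I\x\{0\}$-piece and from $N\eps\oplus\R^{\infty-N}$ on $D(k\lambda|_{\tl W})$ via the framing), and $\partial Y$ decomposes into $X\x\{1\}\x\{0\}$ (realizing $\id_X$) and an embedded copy of $X'$ (realizing $f$), giving the required $(0\lambda+\infty)$-cobordism.

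The main obstacle is the coherent assembly of framings across the several interfaces: the gluing in $X'$, the two pieces of $Y$, and the identification of $\partial Y$ with $\id_X\sqcup f$. In particular, after Compression one must confirm that the $k\lambda$-summand of $\nu_{\tl W}$ restricts at $\partial\tl W=Q$ to the normal bundle of $Q$ in $X$, and that the $N\eps$-summand is orthogonal to $X\x I$ near the boundary, so that $\partial Y$ decomposes as claimed and $X'$ acquires the structure of a $(0\lambda+\infty)$-comanifold in $X\x\R^\infty$.
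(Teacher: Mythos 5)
Your strategy --- surgery on the zero set $Q$ of a generic section of $k\lambda$, using the null\nobreakdash-cobordism supplied by $E(k\lambda)=0$ as the trace --- is the Pontryagin--Thom dual of the paper's argument, which stays entirely on the homotopy\nobreakdash-theoretic side (it translates $E(k\lambda)=0$ via Lemma \ref{4.2} into a stable equivariant null\nobreakdash-homotopy, join\nobreakdash-suspends, desuspends the endpoint into $S^{m+k-1}_\circ$ using Lemma \ref{3.2}(a), and obtains $\bar X'$ as a transversal preimage under the resulting homotopy). The parts of your construction that work are the ones you assert confidently: $f^*(k\lambda)$ does acquire a nowhere-zero section on $X'$, hence the equivariant map $\bar X'\to S^{k-1}_\circ$, and the dimension count is right. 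The gap is in the step you need most, $[f]=[\id_X]$, and it is more than ``coherent assembly of framings'': the cobordism $Y$ you wrote down is not a singular $0$-cobordism in the paper's sense. You attach $D(k\lambda|_{\tl W})$ to $X\x I\x\{0\}$ along $N(Q)\subset X\x\{0\}\x\{0\}$, i.e.\ at the $t=0$ end. Consequently $Y\cap(X\x\{0\}\x\R^N)$ is all of $X\x\{0\}\x\{0\}$ (not $X'$), $Y\cap(X\x\{1\}\x\R^N)$ is $X\x\{1\}\x\{0\}$, and the boundary face $S(k\lambda|_{\tl W})$ of $Y$ lies in the \emph{interior} of $X\x I\x\R^N$. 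So although abstractly $\partial Y\cong X\sqcup X'$, the embedded $Y$ is not a $(0+N)$-comanifold of the pair $(X\x I,\,X\x\partial I)$, and it does not exhibit the required cobordism between $\id_X$ and $f$.

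To repair this you must re-embed the surgery trace so that the $X'$ face lands entirely in one end: for instance, replace $D(k\lambda|_{\tl W})$ by the graph, inside $(X\x\R^{N+1})\x[0,1]$, of a PL function $h\:D(k\lambda|_{\tl W})\to[0,1]$ with $h^{-1}(0)=N(Q)$ and $h^{-1}(1)=S(k\lambda|_{\tl W})$, matched with $(X\but\Int N(Q))\x[0,1]$. Doing so forces you to confront the corner along $\partial N(Q)$, where the two candidate $(N+1)\eps$-framings of $X'$ genuinely disagree: on $(X\but\Int N(Q))\x\{0\}$ the extra framing vector is $\partial/\partial t$, normal to $X$, while on $S(k\lambda|_{\tl W})$ it is the outward radial vector of the disc bundle, which over $\partial\tl W=Q$ is tangent to $X$. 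This corner must be rounded (rotating the framing by a quarter-turn across a collar) before $X'$ is even a singular $0$-comanifold, and one must then check that the rounded object is what the trace bounds. None of this is unfixable --- it is the standard bookkeeping of a framed surgery, and I believe your route can be completed into a genuinely more geometric proof than the one in the paper --- but these are precisely the steps you defer, and without them the key claim $[f]=[\id_X]$ is not established.
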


\begin{remark} In the case where $\bar X=\Delta_f$ for some $k$-realizable
stable $f\:N\to\R^m$, we can take $\bar X'=\Delta_{\pi g}$, where
$g\:N\emb\R^{m+k}$ is an embedding such that the composition
$N\xr{g}\R^{m+k}\xr{\pi}\R^m$ is stable and $C^0$-close to $f$.
A bordism between $\id\:\bar X\to\bar X$ and the projection $\bar X'\to\bar X$
is given by the projection of $\Delta_H$, where $H$ is a generic homotopy
between $f$ and $\pi g$.
\end{remark}

\begin{proof} By Lemma \ref{4.2}, the composition
$\bar X\xr{\phi_{\bar X}}S^\infty_\circ\xr{\rho^\infty_k}S^{kT}$ is
stably equivariantly null-homotopic.
That is, for some $m$, there exists an equivariant homotopy
$H_t\:S^{mT}\wedge\bar X_+\to S^{(m+k)T}$ between the suspension of
$\rho^\infty_k\phi_{\bar X}$ and the constant map to the basepoint $b$.
By using the composition
$$S^{m-1}_\circ*\bar X\xr{r_1}
(S^{m-1}_\circ*\bar X)\cup_{S^{m-1}_\circ}(S^{m-1}_\circ\x I)
\xr{r_2}(S^{mT}\wedge\bar X_+)\vee (S^{m-1}_\circ*b),$$
where $r_1$ is the obvious surjection and $r_2$ shrinks $S^{m-1}_\circ$
to the basepoint, it follows that the composition
$S^{m-1}_\circ*\bar X\xr{\phi_{S^{m-1}*\bar X}}S^{m+\infty}_\circ
\xr{\rho^{m+\infty}_{m+k}}S^{(m+k)T}$ is equivariantly homotopic to the join
$j\:S^{m-1}_\circ*\bar X\to S^{m-1}_\circ*b$ of the identity and the constant
map to $b$ by a homotopy $h_t$ which is {\it suspension sphere preserving},
i.e.\ for each $t$ sends the suspension sphere $S^{m-1}_\circ$ identically
onto $S^{m-1}_\circ\subset S^{(m+k)T}\but S^{kT}$.
Since its final map $h_1$ is null-homotopic, by Lemma \ref{3.2}(a) with
$L=\emptyset$, if $m$ is large enough, there exists an equivariant map
$S^{m-1}_\circ*\bar X\to S^{m+k-1}_\circ\subset S^{(m+k)T}$.
By general position we may assume that this map is suspension sphere
preserving, and is homotopic to $j$ by a suspension sphere preserving
homotopy $h'_t$.
Combining $h_t$ and $h'_t$, we get an equivariant suspension sphere
preserving homotopy
$H\:(S^{m-1}_\circ*\bar X)\x I\to S^{(m+k)T}$ from
$\rho^{m+\infty}_{m+k}\phi_{S^{m-1}_\circ*\bar X}$ to a map into
$S^{k+m-1}_\circ\subset S^{(k+m)T}$.
Assuming that this homotopy is generic, let $\bar W:=H^{-1}(S^{kT})$.
Since $\rho^{m+\infty}_{m+k}$ restricts to $\rho^\infty_k$ over $S^{kT}$, and
$\phi_{S^{m-1}_\circ*\bar X}$ may be taken to be the suspension of
$\phi_{\bar X}$, $\bar W$ meets $(S^{m-1}_\circ*\bar X)\x\{0\}$ in $\bar X$.
By construction, the intersection $\bar X'$ of $\bar W$ with
$(S^{m-1}_\circ*\bar X)\x\{1\}$, admits an equivariant map to
$S^{k-1}_\circ$.
Finally, $\bar W$ lies in $(mT\x\bar X)\x I\subset (S^{m-1}_\circ*\bar X)\x I$, so
its projection onto $\bar X\x I$ is an equivariant singular $0$-cobordism
between $[\bar X]$ and $[H_1\:\bar X'\to\bar X]$. 
\end{proof}

\begin{theorem}\label{4.7} Let $\lambda$ be a line bundle over a polyhedron $X$
such that $E(k\lambda)=0$.
Then $F(k\lambda)=0$.
\end{theorem}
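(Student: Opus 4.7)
The plan is to apply Lemma \ref{4.6} to pass to a surrogate space $X'$ where the secondary obstruction vanishes trivially, and then transfer the vanishing back to $X$ using that $f$ is cobordant to $\id_X$.

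First, by Lemma \ref{4.6}, choose a singular $0$-comanifold $f\:X'\to X$ with $[f]=[\id_X]\in\Imm^0(X)$ such that $\bar X'$ admits an equivariant map to $S^{k-1}_\circ$. Composing with $S^{k-1}_\circ\subset S^\infty_\circ\xr{\rho^\infty_k}S^{kT}$ yields the constant map to the basepoint (since $\rho^\infty_k$ collapses $S^{k-1}_\circ$ to the basepoint by construction), so by Lemma \ref{4.2} we conclude $\E(kf^*\lambda)=0\in\Emb^{kf^*\lambda}(X')$. Then $\E(kf^*\lambda)$ is represented by the empty comanifold, which is its own null-cobordism, so by its defining construction $F(kf^*\lambda)=0\in\Imm^{2kf^*\lambda-1}(X')$.

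Second, establish naturality $f^*F(k\lambda)=F(kf^*\lambda)$. By Lemma \ref{4.1}(a), represent $f$ by an immersion. Given a generic immersed $k\lambda$-null-cobordism $\phi\:R\imm X\x I$ of a representative of $\E(k\lambda)$, the pullback $\phi'\:R\x_{X\x I}(X'\x I)\imm X'\x I$ is an immersed $kf^*\lambda$-null-cobordism of a representative of $\E(kf^*\lambda)$, and a direct fiber-product calculation gives $\Delta_{\phi'}=f^*\Delta_\phi$ (pairs $((r_1,x_1'),(r_2,x_2'))$ with common image in $X'$ force $x_1'=x_2'$, hence $\phi(r_1)=\phi(r_2)$, so the contribution of self-intersections of $f$ itself collapses). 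Under the Thom isomorphism this shows $f^*F(k\lambda)=F(kf^*\lambda)=0$.

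Finally, apply the projection formula. By the geometric definition of cup product recalled just before Lemma \ref{4.2}, $f_!f^*\alpha=[f]\Cup\alpha$ for any $\alpha\in\Imm^*(X)$; since $[f]=[\id_X]$ is the unit of the ring, $f_!f^*=\id$, and hence
\[ F(k\lambda)=f_!f^*F(k\lambda)=f_!F(kf^*\lambda)=f_!(0)=0. \]
The main obstacle is the naturality step of the second paragraph: one must carefully check that the pullback along the codimension-zero immersion $f\x\id_I$ preserves generic transversality, that the $\lambda^{\otimes 2k}$-co-orientations and stable $2k\lambda$-framings pull back compatibly, and that the double-point calculation is consistent with the Thom isomorphism $\Imm^{2kf^*\lambda}(X'\x I,X'\x\partial I)\simeq\Imm^{2kf^*\lambda-1}(X')$. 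Once this is in place, the remaining algebraic manipulation is immediate from Lemma \ref{4.6}.
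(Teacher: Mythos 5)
Your proposal is correct in outline and rests on the same two pillars as the paper's proof: Lemma \ref{4.6} produces $f\:X'\to X$ with $[f]=[\id_X]$ and $\E(kf^*\lambda)=0$ (hence $F(kf^*\lambda)=0$ by Proposition \ref{4.4}), and the relation $[f]=[\id_X]$ is then used to transport this vanishing back to $X$. The only real difference is how the transport is organized: the paper takes a singular $0$-cobordism $w\:W\to X\x I$ between $[\id_X]$ and $[f]$ and uses $w^*$, $w_!$ together with the well-definedness of $F$ over $W$ to exhibit directly a cobordism between a representative of $F(k\lambda)$ and $f_!$ of a representative of $F(kf^*\lambda)$; you instead factor the same identity $F(k\lambda)=f_!F(kf^*\lambda)$ through the naturality statement $f^*F(k\lambda)=F(kf^*\lambda)$ and the projection formula $f_!f^*=[f]\Cup-$. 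Both routes require exactly the verifications you flag (transversality of the pullback, compatibility of co-orientations and framings, compatibility with the Thom isomorphism), so neither buys a shortcut. One genuine misstep: you cannot invoke Lemma \ref{4.1}(a) to represent $f$ by an immersion, since that lemma requires $q+r\ge 1$ and here $q=r=0$; indeed a general element of $\Imm^0(X)$ has no codimension-zero immersed representative. The fix is to work throughout with the stabilized model built into the definition of a singular comanifold, i.e.\ with the embedding $X'\emb X\x\R^N$ with framed normal bundle: intersecting $\phi\x\id_{\R^N}\:R\x\R^N\imm X\x I\x\R^N$ transversally with $X'\x I$ produces the immersed null-cobordism $\phi'$ over $X'$, and your fiber-product computation of $\Delta_{\phi'}$ then goes through verbatim.
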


\begin{proof} Let $f$ be given by Lemma \ref{4.6}.
We have $\E(kf^*\lambda)=0$, so $F(kf^*\lambda)=0$.
On the other hand, if $w\:W\to X\x I$ is a singular $0$-cobordism between
$[\id_X]$ and $[f]$, then $w_!F(kw^*\lambda)$ is represented by a
$k\lambda$-cobordism between some representative of $F(k\lambda)$ and
$f\phi$, where $\phi$ is some representative of $F(kf^*\lambda)$.
Since $F(kf^*\lambda)$ is well-defined, $\phi$ is
$kf^*\lambda$-null-cobordant, hence $f\phi$ is $k\lambda$-null-cobordant.
\end{proof}

\begin{remark} By Lemma \ref{4.2} and Lemma \ref{3.2}(a), $E(k\lambda)=0$ is equivalent
to existence of an equivariant map
$g\:S^{\infty-1}_\circ*\bar X\to S^{\infty+k-1}_\circ$.
We do not know if the suspension of the equivariant map
$\bar X'\to S^{k-1}_\circ$ given by Lemma \ref{4.6} can be chosen homotopic
to the composition $S^{\infty-1}_\circ*\bar X'\xr{\Sigma f}
S^{\infty-1}_\circ*\bar X\xr{g}S^{\infty+k-1}_\circ$.
If this were the case, the proof given by Theorem \ref{4.7} that $\E(k\lambda)$
bounds an immersed $k\lambda$-null-cobordism $\phi\:R\imm X\x I$ with
$[\Delta_\phi]=0$ would not require the use of Proposition \ref{4.4}.
The same argument would then work for
$[\Delta_\phi/t]\in\Imm^{k\lambda\otimes(\bigstar+1)-1}(X)$ in place of
$[\Delta_\phi]$, where $\bigstar$ stands for a line bundle, which is a part
of the data of the immersed comanifold $\Delta_\phi/t$ (namely, it is
associated to the double covering $\Delta_\phi\to\Delta_\phi/t$).
\end{remark}

\subsection{Proof of Theorem \ref{th6}}

\begin{proposition}\label{4.8} Let $M$ be a closed non-orientable
$(2k-1)$-manifold and $\lambda$ a line bundle over $M$.
If $k$ is even, $\Imm^{k\lambda}(M)=\Emb^{k\lambda}(M)$.
\end{proposition}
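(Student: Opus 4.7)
My plan is to show that the forgetful map $\Emb^{k\lambda}(M)\to\Imm^{k\lambda}(M)$ is a bijection. Surjectivity follows from Lemma \ref{4.1}(a) and general position: any class in $\Imm^{k\lambda}(M)$ is represented by an immersed $k\lambda$-comanifold $\phi\:Q\imm M$ with $\dim Q=k-1$, and since $2(k-1)<2k-1=\dim M$, a generic such $\phi$ is an embedding. So the content is injectivity, which (by taking the disjoint union with an orientation reversal) reduces to showing that an embedded $Q\emb M$ which is $\Imm$-null-cobordant is also $\Emb$-null-cobordant.

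To prove this, pick a generic immersed $k\lambda$-null-cobordism $\phi\:R\imm M\x I$ of $Q$ by Lemma \ref{4.1}(a). Since $\dim R=k$ and $\dim(M\x I)=2k$, the double-point set $\Delta_\phi\subset R\x R$ is a finite set of pairs matched by the involution $t$, and the plan is to cancel them by Whitney moves in $M\x I$. For this, one must check that the appropriate algebraic count of double points vanishes. I handle this by mimicking the construction preceding Proposition \ref{4.4}: via the Thom isomorphism $\Imm^{2k\lambda}(M\x I,\,M\x\partial I)\simeq\Imm^{2k\lambda-1}(M)$ the set $\Delta_\phi$ determines a class $[\Delta_\phi]\in\Imm^{2k\lambda-1}(M)$, whose independence of $\phi$ follows from the argument of Proposition \ref{4.4} applied (via Herbert's formula \ref{4.3}) to a second choice of null-cobordism. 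Since $k$ is even $\lambda^{\otimes 2k}$ is trivial and $H^{2k-1}(M;\Z)\simeq\Z/2$; the orientation argument from the proof of Example \ref{4.5} then gives $[\Delta_\phi]=2[\Delta_\phi/t]=0$ in $\Z/2$, using that $k$ even makes the involution preserve the orientation of $p^*(2k\lambda)$.

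Once the algebraic count vanishes, Whitney moves in the $2k$-manifold $M\x I$ pair up and cancel the double points, producing an embedded null-cobordism. The main obstacle will be the second step, namely setting up the relative version of the $F$-obstruction where the boundary is an arbitrary embedded $Q$ (rather than a zero-set representative of $\E(k\lambda)$) and verifying both its independence and its vanishing in this relative context, followed by a Whitney cancellation that is straightforward for $k\ge 3$ but more delicate for $k=2$ (codimension $2$).
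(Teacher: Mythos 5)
Your reduction (surjectivity by general position since $2(k-1)<2k-1$, then showing that an embedded $Q$ with an immersed $k\lambda$-null-cobordism has an embedded one) matches the paper's setup. The gap is in the core step, where you propose to ``check that the appropriate algebraic count of double points vanishes'' via the class $[\Delta_\phi]\in\Imm^{2k\lambda-1}(M)$. The identity $[\Delta_\phi]=2[\Delta_\phi/t]$ shows that this class is \emph{automatically} zero in $H^{2k-1}(M)\simeq\Z/2$, for every $\phi$ and every $k$; it carries no information and in particular does not let you pair up the double points. The obstruction that actually matters is the parity of the number of double points of $R$ itself, i.e.\ of $|\Delta_\phi/t|$, and this can be odd. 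Worse, it is not an invariant of $Q$ at all, so no argument of your form (define a well-defined invariant of the boundary data, prove it vanishes) can control it. The paper instead \emph{changes} the parity when necessary: inside a regular neighborhood $\nu_\ell$ of an orientation-reversing loop $\ell$ in $M\x I$ it constructs a closed immersed $k\lambda$-comanifold $K\x O\imm K\x\R^{2k-2}\subset\nu_\ell$ (with $K$ the Klein bottle) having exactly \emph{one} double point, and adjoins it to $R$ if the count is odd. This gadget is precisely where both hypotheses enter: non-orientability of $M$ supplies the orientation-reversing loop, and $k$ even makes $k\lambda|_\ell$ trivial so that the gadget is $k\lambda$-framed. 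Your argument uses neither hypothesis in an essential way (evenness of $k$ only appears in the vacuous $2x=0$ computation), which is a symptom of the missing idea.

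There is a second, smaller gap in the final cancellation. Even once the number of double points is even, the double points of $R\imm M\x I$ carry no well-defined global signs because $M\x I$ is non-orientable and non-simply-connected, so ``Whitney moves pair up and cancel the double points'' is not yet meaningful. The paper resolves this by passing to the orientation double cover $\bar M\x I$, where each double point lifts to a pair of opposite signs; since the number of pairs is even one can select one point from each pair with total algebraic count zero, pair the selected points with signs, and cancel along framed $1$-handles upstairs whose projections are $k\lambda$-framed handles killing all double points downstairs. Your concern about the delicacy of the $k=2$ case is subordinate to this sign-matching step, which your proposal does not address.
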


\begin{proof} Let $O$ be a connected codimension one submanifold in $\R^{k-1}$
(e.g.\ a point when $k=2$ and $S^{k-2}$ when $k>2$).
Let $\ell$ denote $S^1$, then $\ell\x O\subset\ell\x\R^{k-1}\subset\ell\x\R^{2k-2}$
is $k$-framed in $\ell\x\R^{2k-2}$.
This extends to a $k$-framing of $\mu\x O$ in $\mu\x\R^{2k-2}$, where $\mu$
is the mapping cylinder of the nontrivial double covering $\bar\ell\to\ell$.
Since $\bar\ell\x O\to\ell\x O$ is a double covering, if we twist
the (integer) framing of $\ell\x O$ in $\ell\x\R^{2k-2}$ by one full twist,
the (integer) framing of $\bar\ell\x O$ in $\bar\ell\x\R^{2k-2}$ will differ
from the original one by two full twists.
These two framed embeddings $\bar\ell\x O\emb\bar\ell\x\R^{2k-2}$ can be
joined by a framed regular homotopy with one transverse double point in
$\bar\ell\x\R^{2k-2}\x I$.
Combining the two framed embeddings $\mu\x O\emb\mu\x\R^{2k-2}$ with this
regular homotopy, we obtain a framed immersion $K\x O\imm K\x\R^{2k-2}$
with one double point, where $K=\mu\cup\bar\ell\x I\cup\mu$ is the Klein
bottle.
Since $K$ is the boundary of the nontrivial $2$-disk bundle over $\ell$,
$K\x\R^{2k-2}$ embeds into the nontrivial $(2k-1)$-vector bundle over
$\ell$.
Thus if $\ell$ is an orientation reversing loop in $M\x I$, we obtain
a $k$-framed immersion with one double point
$K\x O\imm K\x\R^{2k-2}\subset\nu_\ell$ in the regular neighborhood of $\ell$.
Since $k$ is even, $k\lambda$ restricts to the trivial bundle over $\ell$,
so this immersion is also $k\lambda$-framed in $M\x I$.

By general position, every element of $\Imm^{k\lambda}(M)$ can be
represented by an embedded $k\lambda$-framed manifold $Q$ in $M$.
Suppose that it admits an immersed $k\lambda$-null-cobordism $R\imm M\x I$.
If $R$ has an odd number of double points, we replace it by $R\cup K\x O$.
Since $M$ is non-orientable, the double points of $R$ carry no global signs
and so can now be paired up so as to match the local signs.
More precisely, each double point lifts to a pair of double points of
opposite signs of the immersion $\bar R\imm\bar M\x I$ in the orientation
double cover.
Since the number of pairs is even, we can pick one double point from each
pair so that the total algebraic number of the picked points is zero.
Then the picked points can be paired up with signs, and cancelled along
framed $1$-handles in $\bar M$.
These project to $k\lambda$-framed handles in $M$, killing all double points
of $R$. 
\end{proof}

\begin{theorem}\label{4.9} Let $X$ be a $(2k-1)$-polyhedron and $\lambda$ a
line bundle over $X$ with $E(k\lambda)=0$.
If $k$ is even, $\E(k\lambda)=0$.
\end{theorem}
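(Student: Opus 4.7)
The plan is to adapt the proof of Proposition \ref{4.8} to the polyhedral setting, using Theorem \ref{4.7} in place of the ad hoc parity argument employed there. Let $Q\emb X$ be an embedded $k\lambda$-comanifold representing $\E(k\lambda)\in\Emb^{k\lambda}(X)$. Since $E(k\lambda)=0$, pick an immersed $k\lambda$-null-cobordism $\phi\:R\imm X\x I$ of $Q$. With $\dim X=2k-1$ we have $\dim R=k$, so generically $\Delta_\phi$ is a finite set of transverse double points.

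By Theorem \ref{4.7}, $F(k\lambda)=0$. Unwinding the definition of $F$ and the Thom isomorphism $\Imm^{2k\lambda}(X\x I,X\x\partial I)\simeq\Imm^{2k\lambda-1}(X)$, this says that the projected double point set $\pi(\Delta_\phi)\subset X$ is null-cobordant as a $(2k\lambda-1)$-framed $0$-dimensional comanifold; equivalently, there is a $(2k\lambda-1)$-framed immersed $1$-manifold $W\imm X\x J$ with $\partial W=\pi(\Delta_\phi)$. After general position $W$ may be assumed to be a disjoint union of embedded arcs, each pairing up two of the projected double points.

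Using $W$, the rest of the argument mirrors the handle-cancellation step in Proposition \ref{4.8}. Each arc of $W$ is lifted to an arc in $R$ joining the two preimages of the paired double points, and a $k\lambda$-framed $1$-handle is attached to $R$ along this lifted arc, realized inside a regular neighborhood of the arc in $X\x I$. Such a neighborhood is a PL manifold by virtue of the $k\lambda$-framed normal disc bundle of $R$, so the surgery is legitimate even though $X$ itself is only a polyhedron. The parity of $k$ enters at exactly the same point as in Proposition \ref{4.8}: since $k$ is even, $k\lambda$ pulls back trivially along orientation-reversing loops, so the framings extend compatibly along the $1$-handles and the paired double points cancel with matching signs. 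After performing all such surgeries, the null-cobordism of $Q$ becomes embedded, which gives $\E(k\lambda)=0$.

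The main obstacle is the passage from the abstract null-cobordism $W$ supplied by Theorem \ref{4.7} to a concrete geometric pairing realizable by honest $1$-handle surgery in the polyhedral ambient $X\x I$. In particular one must verify that the arcs of $W$ can be lifted disjointly into $R$ with compatible $k\lambda$-framings and that the handle-attachments can be carried out without introducing new double points. I expect both to follow from a careful general-position argument together with the parity hypothesis on $k$, exactly as in the manifold case treated in Proposition \ref{4.8}.
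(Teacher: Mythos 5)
There is a genuine gap, and it sits exactly where you wave at ``unwinding the definition of $F$.'' The class $F(k\lambda)\in\Imm^{2k\lambda-1}(X)\simeq H^{2k-1}(X)$ is represented by the \emph{ordered} double point manifold $\Delta_\phi\imm R\imm X\x I$, which covers the actual (unordered) double point set $\Delta_\phi/t$ two-to-one; since $k$ is even this covering is compatible with the co-orientations, so $F(k\lambda)=2[\Delta_\phi/t]$ (cf.\ the proof of Example \ref{4.5}). Hence Theorem \ref{4.7} only gives $2[\Delta_\phi/t]=0$, not $[\Delta_\phi/t]=0$. Your claimed framed $1$-manifold $W$ with $\partial W=\pi(\Delta_\phi)$ pairs up the $2d$ branch points, not the $d$ double points, and an arc of $W$ may well join the two branches of a single double point — which cancels nothing. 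The quantity that actually obstructs pairing and cancelling the double points by $k\lambda$-framed $1$-handles is $[\Delta_\phi/t]$, and when $H^{2k-1}(X)$ contains $2$-torsion this class need not vanish even though $F(k\lambda)$ does. Your argument therefore only establishes the theorem under the additional hypothesis that $H^{2k-1}(X)$ has no elements of order $2$, which is precisely the ``first assume'' case of the paper's proof.

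The missing content is the general case: one must show that $[\Delta_\phi/t]$ can be \emph{changed} by any prescribed $2$-torsion element of $H^{2k-1}(X)$ by adding to $R$ a suitable immersed $k\lambda$-comanifold with controlled double points. The paper does this by taking, for a generator $[D]$ of order $2n$, a $(2k-1)$-framed comanifold $C\imm X\x I$ bounding $2n$ copies of $D$, choosing a free involution on $\partial C$, and running the Klein-bottle/framing-twist construction of Proposition \ref{4.8} along the resulting $1$-complex $\ell=C/t$ to manufacture exactly $n$ double points in the class $\delta^*[D]$. This geometric input is not a ``careful general-position argument'' refinement of your plan; it is an additional construction your proposal does not contain. (Your account of where the parity of $k$ enters is also slightly off: besides the framing extension over orientation-reversing loops, evenness of $k$ is what makes $F(k\lambda)$ equal to $2[\Delta_\phi/t]$ rather than $0$, i.e.\ what makes $F$ carry any information at all.)
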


\begin{proof} By Theorem \ref{4.7}, $F(k\lambda)=0$.
Let $\phi\:R\imm X\x I$ be an immersed $k\lambda$-null-cobordism of
a representative of $\E(k\lambda)$.
Since $k$ is even, $F(k\lambda)=2[\Delta_\phi/t]\in H^{2k-1}(X)$ (cf.\ the
proof of Example \ref{4.5}).

First assume that $H^{2k-1}(X)$ contains no elements of order $2$.
Then the algebraic number of double points of $R$ is zero.
So they can be paired up with signs and cancelled by a surgery along
$k\lambda$-framed $1$-handles (cf.\ the proof of Proposition \ref{4.8}; if
$H^{2k-1}(X)=0$ say, the double points can be pushed off the boundary).
Thus $Q=\partial R$ bounds an embedded $k\lambda$-null-cobordism.

Now in the general case it suffices to prove that for each generator
$[D]\in H^{2k-1}(X)$ of order $2n$ there is
an immersed $k\lambda$-comanifold in $X\x I$ with $n$ double points each
representing $\delta^*[D]\in H^{2k}(X\x I,X\x\partial I)$.
Since $2n[D]=0$, there is a $(2k-1)$-framed comanifold $C\imm X\x I$ with
boundary the constant map $\{1,\dots,2n\}\to D\subset X\x\{0\}$.
Let us pick a free involution $t$ on the $2n$ points $\partial C$.
It extends to a free involution on a double cover $\bar\ell$ of $\ell:=C/t$.
An embedded perturbation $\ell\subset X\x I$ of $C/t$ is clearly
$(2k-2+\mu)$-framed in $X$, where $\mu$ is the line bundle associated with
the double covering $\bar\ell\to\ell$.
The remainder of the construction repeats that in the proof of
Proposition \ref{4.8}.
\end{proof}

\begin{corollary*}[=Theorem \ref{th6}] Let $N^n$ be a compact smooth manifold (resp.\ a compact
polyhedron), $M^{2n-2k+1}$ a smooth (PL) manifold and $f\:N\to M$ a stable smooth (PL) map, 
where $n\ge 2k+1$.
In the smooth case, assume additionally that either $f$ is a fold map or $n\ge 3k-2$.
If $f\:N\to M$ is $k$-realizable and $k\in\{2,4,8\}$, then $f$ is a smooth (PL) $k$-prem.
\end{corollary*}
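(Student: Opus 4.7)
The plan is to chain together the ingredients developed throughout Sections~\ref{s2}--\ref{s4}. Set $X := \Delta_f/t$, with associated line bundle $\lambda$; since $f$ is stable, $\dim X \le 2n - m = 2k - 1$. The dimensional condition $n \ge 2k+1$ is equivalent to the metastability $2(m+k) \ge 3(n+1)$ required by Theorems \ref{2.1} and \ref{3.3}, while the alternative hypothesis ``$f$ is a fold map or $n \ge 3k-2$'' is precisely the additional requirement $3n - 2m \le k$ of Theorem \ref{2.1} in the smooth case.

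First I would apply Theorem \ref{3.3}(a), which says that $k$-realizability of $f$ is equivalent to the vanishing of the stable cohomotopy obstruction $\Theta(f) \in \omega^{kT}_{\Z/2}({\Delta_f}_+)$. By the remark following Lemma \ref{4.2}, this is in turn equivalent to the vanishing of the stable Euler class $E(k\lambda) = 0 \in \Imm^{k\lambda}(X)$.

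Next, since $k \in \{2,4,8\}$ is even and $\dim X \le 2k - 1$, Theorem \ref{4.9} applies and promotes $E(k\lambda) = 0$ to the unstable (embedded) version $\E(k\lambda) = 0 \in \Emb^{k\lambda}(X)$. By the first assertion of Lemma \ref{4.2}, this is equivalent to the composition $\Delta_f \xr{\phi_{\Delta_f}} S^\infty_\circ \xr{\rho^\infty_k} S^{kT}$ being equivariantly null-homotopic at the unstable level, as a map into $S^{kT}$.

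Finally, Lemma \ref{3.7} (whose side hypothesis $\dim \Delta_f \le 22$ in the case $k=8$ is satisfied, since $2k-1 = 15$) converts this unstable null-homotopy, via the Hopf fibration over $\KP^\infty$, into an equivariant map $\Delta_f \to S^{k-1}_\circ$. Theorem \ref{2.1} then concludes that $f$ is a $k$-prem. The essential difficulty---passing from stable to unstable cohomotopy in the borderline dimension $2k - 1$, where the injectivity part of Lemma \ref{3.1} just barely fails---has already been overcome by the secondary obstruction argument of Theorem \ref{4.7} and its consequence Theorem \ref{4.9}; what remains here is simply the assembly of the pieces.
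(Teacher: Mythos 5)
Your proof is correct and follows essentially the same chain as the paper's own argument: Theorem \ref{3.3}(a) and Lemma \ref{4.2} to get $E(k\lambda)=0$, Theorem \ref{4.9} to promote this to $\E(k\lambda)=0$, then Lemma \ref{4.2} again, Lemma \ref{3.7}, and Theorem \ref{2.1}. Your dimension checks (the equivalence of $n\ge 2k+1$ with the metastable range, of $n\ge 3k-2$ with $3n-2m\le k$, and the bound $\dim\Delta_f\le 2k-1\le 22$ for $k=8$) are all accurate.
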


\begin{proof} Let $\lambda$ be the line bundle associated with the $2$-cover
$\Delta_f\to\Delta_f/t$ and set $X=\Delta_f/t$.
By the trivial direction of Theorem \ref{3.3}(a), $\Theta(f)=0$, hence by
Lemma \ref{4.2}, $E(k\lambda)=0$.
Then by Theorem \ref{4.9}, $\E(k\lambda)=0$.
Therefore by Lemma \ref{4.2}, the composition
$X\xr{\phi_X}S^\infty_\circ\xr{\rho^\infty_k}S^{kT}$ is equivariantly
null-homotopic.
By Lemma \ref{3.7}, $X$ admits an equivariant map to $S^{k-1}_\circ$.
By Theorem \ref{2.1}, $f$ is a $k$-prem. 
\end{proof}

\end{document}